\journal{}
\newcommand{\Proofname}{Proof}
\newtheorem{theorem}{Theorem}[section]
\newtheorem{lemma}[theorem]{Lemma}
\newtheorem{corollary}[theorem]{Corollary}
\newtheorem{proposition}[theorem]{Proposition}
\newtheorem{Definition}[theorem]{Definition}
\newtheorem{Rem}[theorem]{Remark}
\newenvironment{remark}{\begin{Rem}\rm}{\end{Rem}}
\newcommand\be{\begin{equation}}
\newcommand\ee{\end{equation}}
\newcommand\bea{\begin{eqnarray}}
\newcommand\eea{\end{eqnarray}}
\newcommand\beaa{\begin{eqnarray*}}
\newcommand\eeaa{\end{eqnarray*}}
\newcommand\bay{\begin{array}}
\newcommand\eay{\end{array}}
\newcommand\ba{\begin{align}}
\newcommand\ea{\end{align}}
\newcommand\beba{\begin{equation}\left\{\begin{array}{rcl}}
\newcommand\eeba{\end{array}\right.\end{equation}}
\newcommand\bebaa{\begin{equation*}\left\{\begin{array}{rcl}}
\newcommand\eebaa{\end{array}\right.\end{equation*}}
\newcommand\beca{\begin{equation}\left\{\begin{array}{rcll}}
\newcommand\eeca{\end{array}\right.\end{equation}}
\newcommand\becaa{\begin{equation*}\left\{\begin{array}{rcll}}
\newcommand\eecaa{\end{array}\right.\end{equation*}}
\newcommand{\eps}{\varepsilon}
\newcommand{\R}{\mathbb{R}}
\newcommand{\Z}{\mathbb{Z}}
\newcommand{\N}{\mathbb{N}}
\newcommand{\ds}{\displaystyle}
\begin{document}
\begin{frontmatter}
\title{Keller--Segel type approximation for nonlocal Fokker--Planck equations in one-dimensional bounded domain}

\author[Ryukoku]{Hideki Murakawa}
\ead{murakawa@math.ryukoku.ac.jp}
\author[Hakodate]{Yoshitaro Tanaka\corref{cor}}
\ead{yoshitaro.tanaka@gmail.com}

\address[Ryukoku]{
Faculty of Advanced Science and Technology, 
Ryukoku University,
1-5 Yokotani, Seta Oe-cho, Otsu, Shiga 520-2194, Japan
}

\address[Hakodate]{
Department of Complex and Intelligent Systems,
School of Systems Information Science,
Future University Hakodate,
116-2 Kamedanakano-cho, Hakodate, Hokkaido 041-8655, Japan
}

\cortext[cor]{Corresponding author.}

\begin{abstract}
Numerous evolution equations with nonlocal convolution-type interactions have been proposed.
In some cases, a convolution was imposed as the velocity in the advection term.
Motivated by analyzing these equations, we approximate advective nonlocal interactions as local ones, thereby converting the effect of nonlocality.
In this study, we investigate whether the solution to the nonlocal Fokker--Planck equation can be approximated using the Keller--Segel system.
By singular limit analysis, we show that this approximation is feasible for the Fokker--Planck equation with any potential and that the convergence rate is specified.
Moreover, we provide an explicit formula for determining the coefficient of the Lagrange interpolation polynomial with Chebyshev nodes.
Using this formula, 
the Keller--Segel system parameters for the approximation are explicitly specified by the shape of the potential in the Fokker--Planck equation.
Consequently, we demonstrate the relationship between advective nonlocal interactions and a local dynamical system.
\end{abstract}

\begin{keyword} Approximation, Nonlocal Fokker--Planck equation, Keller--Segel system, Singular limit analysis, Order estimate, Coefficients of Lagrange interpolation polynomial
 \\
{\bf AMS subject classifications. 92C17, 35Q84, 35A35, 92-10, 35B36}
\end{keyword}

\end{frontmatter}

\section{Introduction}
The development of multi-cellular organisms, cell migration, and information processing by nerve cells in the brain depend on various interactions between the cells and other biological factors.
In these phenomena, the functions of these interactions determine the state of the subsequent time evolution and exhibit the occurrence and various behavior of the patterns.
When modeling such phenomena, there are cases in which long-range interactions that affect distant objects globally in space naturally appear.
These interactions are called nonlocal interactions.
They have attracted considerable attention in various fields and have been studied extensively.
The existence of nonlocal interactions has been experimentally suggested, for example, in phenomena such as neural firing in the brain \cite{K1953}, pigmentation patterns in animal skin \cite{NTKK2009,WK2014,YK2014}, development of multicellular organisms, cell migration, and adhesion \cite{KHSIMKANTT2016}.

In the experiment by Kuffler \cite{K1953}, the electrode was set at a ganglion cell in the receptive field of the retina of a cat.
The firing rate against the light stimulus was measured by illuminating two points at different distances from the electrodes.
 Ganglion cells respond to light stimuli locally in space and, conversely, inhibit them laterally in space.
From these observations, by considering the local excitation and lateral inhibition as positive and negative values, respectively, this interaction can be modeled by a sign-change function with radial symmetry.
This function is called the local activation and lateral inhibition (LALI) interaction or Mexican hat.

Experimental results on the interactions between yellow and black pigment cells in the skin of zebrafish were reported by Nakamasu et al. \cite{NTKK2009}.
During these experiments, a square section of black pigmented cells within a zebrafish's skin stripe was eliminated using laser ablation.
The regions of yellow pigmented cells surrounding the squares were removed by laser ablation.
Several zebrafish were prepared with different patterns of yellow pigmented cells, which were removed using a laser.
For each pattern of yellow pigment cells, the number of black pigment cells proliferating in the square located at the center of the pattern was quantified for two weeks.
The comparison of the number of proliferating black pigment cells revealed the existence of both long- and short-range interactions between these cells.
Moreover, the derived interactions are summarized as a network in \cite{NTKK2009}.
A theoretical method to reduce a given reaction-diffusion network with spatial interactions, such as metabolites or signals with arbitrary factors, into the shape of an essential integral kernel have been proposed by Ei et al. \cite{EIKMT2021}.
A Mexican hat function was theoretically derived by applying this reduction method to the network given by \cite{NTKK2009}.
Additionally, cells such as the pigment cells in the zebrafish extend their cellular projections to exchange the biological signals each other.
Here we refer to the papers by Katsunuma et al. \cite{KHSIMKANTT2016} and Kondo \cite{kondo2017}.
Hamada et al. \cite{HWLNHPK2014} and Watanabe and Kondo \cite{WK2014} reported that pigment cells send different biological signals depending on the length of their cellular projections.

Katsunuma et al. \cite{KHSIMKANTT2016} investigated the behavior  of cell adhesion was investigated by using two types of cellular adhesion molecules in HEK 293 cells. 
These cells also have cellular projections that are ten times longer than their body size.
Cells can sense the cell density around them using their total body from the tip of the leading edge, and they can decide the directions of cell migration and cell adhesion.

Based on this biological background, numerous mathematical models have been proposed and analyzed.
The nonlocal interactions are often modeled by convolution with a suitable integral kernel.
Here, we introduce two types of the model of the nonlocal interaction.
We call the convolution itself without a derivative the normal type, and the gradient of convolution in the advection term the advective type.
We introduce mathematical models with a normal type of nonlocal interaction.
The peaks of biological signals are located distantly from the center of the cell body during signal transduction through cellular projections. 
Here, we refer to the observational results of Hamada et al. \cite{HWLNHPK2014} and Watanabe and Kondo \cite{WK2014}.
For these interactions, many models that impose a convolution with an integral kernel with peaks distant from the origin have been proposed.
For example, as models of pigmentation patterns in animal skin \cite{NTY2017,NTY2018}, population dispersal for biological individuals \cite{EGIW2020, HMMG2003}, and vegetation patterns \cite{AIM2018}, the following nonlocal evolution equation is proposed:
\begin{equation*}
	\frac{\partial u}{\partial t} = d\Delta u +K * u +f( u ),
\end{equation*}
where $d \ge 0$ is a constant, $u=u(x,t)$ is the density, $f$ is a suitable reaction or growth term, $K$ is an integral kernel, and $*$ denotes the convolution of two functions in the space variable:
\begin{equation*}
	K*u (x,t) = \int K(x-y) u(y,t) dy.
\end{equation*}
Analytical results were reported by Bates et al. \cite{BFRW1997}, Coville et al. \cite{CDM2008} and Ei et al. \cite{EGIW2020}.
It is rigorously shown by Ninomiya et al. \cite{NTY2017} that the nonlocal interaction plays a role to induce the Turing instability.
By imposing a convolution using the Heaviside function, a nonlocal evolution equation to investigate the dynamics of the membrane potential of neurons in the brain by Amari \cite{A1977}.
Additionally, motivated by the pattern formations observed in animal skins, a nonlocal model by applying the cut function to the convolution term was proposed by Kondo \cite{kondo2017}.
This model can reproduce various patterns by changing only the kernel shape, even though it comprises only one component.
The above nonlocal interactions can be derived from the continuation of spatially discretized models with intercellular interactions by Ei et al. \cite{EISTWY2020}.

Next, we introduce mathematical models of the advective type of nonlocal interaction.
As a first example, the aggregation-diffusion equation was proposed and analyzed for cell migration and collective motion by Bailo et al. \cite{BCMS2020} and Carrillo et al. \cite{CCY2019}:
\begin{equation*}
	\frac{\partial  \rho}{ \partial t}  = \Delta \rho^m - \nabla \cdot (\rho \nabla( W* \rho) ), \quad m\ge 1,
\end{equation*}
where $\rho=\rho(x,t)$ denotes the cell density at  position $x$ at time $t>0$ and $m\ge1$ is a constant.
If the potential $W$ is positive, the convolution term $\nabla( W* \rho)$ determines the velocity of the advection by integrating the gradient of $\rho$.
The density $\rho$ at each point is advected toward the gradient of $\rho$.
Thus, the second term provides the aggregation effect.
If $W$ is positive with a compact support, then the compact support corresponds to the total cell body.
Subsequently, the term $\nabla( W* \rho) $ provides the effect determines the  velocity of the advection by sensing the cell density gradient in the total cell body.
When $m=1$, this model can be classified as a nonlocal Fokker--Planck equation, whereas when  $m>1$, it can be classified as a nonlocal porous medium equation.

Another example is the cell adhesion model.
It has been proposed to describe and analyze the cell adhesion phenomena by Carrillo et al. \cite{CMSTT2019}:
\begin{equation*}
	\frac{\partial  \rho}{ \partial t} = \Delta \rho^2  - \nabla \cdot (\rho(1-\rho)\nabla( W*\rho))+f(\rho).
\end{equation*}
In contrast to the aggregation-diffusion equation, the velocity in the advection term is saturated by the cell density $(1-\rho)$ in this model.
Carrillo et al. \cite{CMSTT2019} reported that this model can replicate cell adhesion and cell sorting phenomena both qualitatively and quantitatively.
For cell migration and cell adhesion processes, the integral kernel is called the potential, and the Mexican hat (or LALI) function is crucial in the local attraction and lateral repulsion in these two models.

Cell pattern formation, migration, and adhesion play pivotal roles in the biological development of various organs and tissues.
Therefore, revealing the mechanisms underlying these phenomena is an important problem.
However, the nonlocal term with the normal or advection type in nonlocal equations occasionally makes analysis difficult.
To overcome this difficulty, the approximation of a nonlocal term by another type of term can be a solution.
In light of this, we aim to reveal whether advective nonlocal interactions can be approximated by local dynamics.
As a first step, we propose an approximation method for advective nonlocal terms in the nonlocal Fokker--Planck equation using a Keller--Segel system with multiple auxiliary chemotactic factors.
The nonlocal Fokker--Planck equation and Keller--Segel systems are basic models with advective nonlocal interactions and typical local dynamics, respectively.
We show that any smooth kernel can be expanded by combining  the fundamental solutions for an elliptic equation in the Keller--Segel system.
Furthermore, we report that the solution to the nonlocal Fokker--Planck equation with an even smooth kernel can be approximated by that of the Keller--Segel system with specified parameters depending on the integral kernel shape.

The remainder of this paper is organized as follows:
Section \ref{sec:2} outlines the mathematical framework and summarizes the main results. 
In Section \ref{sec:3}, the existence theorem is established, followed by a singular limit analysis detailed in Section \ref{sec:4}. 
Section \ref{sec:cof_aj} starts by presenting a precise formula for the Lagrange interpolation polynomial's coefficient. 
This is followed by a method to ascertain the coefficient of the linear sum in the fundamental solution for an elliptic equation, characterized by the shape of its integral kernel. 
The section also includes a proof of the fundamental solution's series expansion. 
A linear stability analysis is then conducted in Section \ref{sec:LSA}. The paper concludes with Section \ref{sec:7}, summarizing the study's findings and implications.

\section{Mathematical settings and main results}\label{sec:2}
In this section we describe the mathematical settings and results.
We denote the theoretical concentration or cell density at position $x \in \Omega:= [-L , L]$ at time $t>0$ by $\rho = \rho (x,t)$.
We investigate the solution to the following nonlocal Fokker--Planck equation:
\begin{equation}\label{nonlocal:FP}\tag{P}
	\frac{ \partial \rho }{ \partial t }  = \frac{ \partial^2  \rho }{ \partial x^2 }  - \frac{ \partial  }{ \partial x } \Big(  \rho  \frac{ \partial  }{ \partial x } ( W*\rho ) \Big)  \ \text{in} \ \Omega\times (0,\infty),
\end{equation}
where  the periodic boundary condition 
\begin{equation}\label{bc:peri}
\left\{
\begin{aligned}
	&\rho(-L,t) = \rho(L, t), \ t>0, \\
	&\rho_x(-L, t) = \rho_x( L, t), \ t>0,
\end{aligned}
\right.
\end{equation}
is imposed and the initial datum is given by $\rho(x, 0) := \rho_0(x)$. 
Here $W*u(x)$ is defined by
\[
W*u (x) := \int_\Omega W(x-y) u(y) dy
\]
for $W \in L^1_{\mathrm{per}}(\Omega):=\{u|_\Omega \in L^1(\Omega) \ | \ u(x)=u(x+2L), \ x \in \R \}$.
Setting $d_j>0, (j=1,\ldots, M)$ which are the diffusion coefficients in \eqref{eq:system} introduced below,
we define the following function
\begin{equation}\label{fund_sol:v_j}
	k_j(x):=\frac{1}{2\sqrt{d_j} \sinh  \frac{L}{\sqrt{d_j} } } \cosh \frac{L - \left| x \right|}{\sqrt{ d_j } }.
\end{equation}
This is actually a fundamental solution to the elliptic equation explained in Lemma \ref{lemm:k_j} below.
The typical examples of $W$ are as follows:
\begin{align}
	&W(x) =  k_1(x) \text{ with any } d_1>0, \label{kernel:KS}\\
	&W(x) = k_1(x) - k_2(x)  \text{ with any }  d_1< d_2,\label{kernel:lali}\\
	&W(x) = (R_0 - |x| )\chi_{B(R_0)}(x), \label{pot:attract}\\
	&W(x) = ((a_1+a_2)R_0 -a_2R_1 -a_1 |x| )\chi_{B(R_0)}(x) - a_2(R_1-|x|)\chi_{ B(R_1)\backslash B(R_0) }(x) \label{pot:att_rep}
\end{align}
with any $a_1, a_2>0$, where $R_1>R_0>0$ are constants called the sensing radius, $B(R_0)$ is a ball with radius $R_0$ and origin center,  and
\begin{equation*}
	\chi_{B(R_0)}(x)=
	\left\{
\begin{aligned}
	&1 \ & \text{if} \ x \in B(R_0),\\
	&0 \ & \text{otherwise}.
\end{aligned}
\right.\\
\end{equation*}
The profiles of \eqref{kernel:lali} and \eqref{pot:attract} are presented in Figures \ref{fig:Mhat} (a) and \ref{fig4} (a), respectively. 
The nonlocal Fokker--Planck equation \eqref{nonlocal:FP} with the integral kernels \eqref{kernel:KS} and \eqref{kernel:lali} corresponds to the parabolic-elliptic Keller--Segel systems.
A linear stability analysis is presented in Section \ref{sec:LSA}. 
Integral kernels \eqref{pot:attract} and \eqref{pot:att_rep} were introduced by Carrillo et al. \cite{CMSTT2019} and Murakawa and Togashi \cite{MH2015}.
They have a compact support corresponding to the cell body.
If these integral kernels are used for the potential $W$ in \eqref{nonlocal:FP}, 
this describes the situation in which $\rho$ at each point detects the surrounding cell density in the own cell body and the velocity of the aggregation is determined.
This corresponds to Haptotaxis phenomenon.

Firstly, we have the following existence result.
To construct a mild solution to \eqref{nonlocal:FP}
we define a function space with a norm as 
\begin{equation*}
	E:=C([0, \tau], H^1(\Omega)), \quad 
    \left\| \cdot \right\|_{  E } := \left\| \cdot \right\|_{ C([0, \tau],H^1 (\Omega))} 
\end{equation*}
for any time $\tau>0$.
Introducing the following function:
\begin{align}
    G(x,t) := \frac{1 }{2L } \sum_{n \in \Z} e^{-\sigma_n^2 t} e^{i\sigma_n x}, \label{G}
\end{align}
where $i$ is the imaginary unit and 
\[
\sigma_n : = \frac{n \pi}{L}, \quad n \in \Z,
\]
we define the map $\Gamma: E \to E$ as 
\begin{align*}
    \Gamma[u] (x,t) : = (G * \rho_0)(x,t) - \int_0^t  \int_\Omega  G(x-y, t-s) \Big( \frac{\partial}{\partial x} \Big( u  \frac{ \partial  }{ \partial x } W*u  \Big) \Big)(y, s) dy ds, \quad u \in E. \notag
\end{align*}
We say that a function $u \in C([0,T], H^1(\Omega))$ for any $T>0$ is a mild solution to \eqref{nonlocal:FP} with $\rho(x,0)=\rho_0(x)$, provided $ u=\Gamma[u]$.
The following proposition is proven using the standard argument of the fixed point theorem.
\begin{proposition}
Let $R>0$ be an arbitrary  real number, and assume that $W \in W_{\mathrm{per}}^{1,1}(\Omega):=\{u|_\Omega \in W^{1,1}(\Omega) \ | \ u(x)=u(x+2L), \ x \in \R \} $ and 
\begin{align*}
\rho_0 \in H^1(\Omega) \ \text{with} \ \left\| \rho_0 \right\|_{H^1(\Omega)}  < R.
\end{align*}
Then for any $T>0$ there exists a unique mild solution  $\rho$ to \eqref{nonlocal:FP} in $ C( [0, T], H^1(\Omega) ) \cap L^2( 0, T, H^2(\Omega) )$ satisfying
\begin{equation*}
	\left\| \rho \right\|_{C([0, T], H^1 (\Omega))} < C_0,
\end{equation*}
where $C_0$ depends on $R$ and $T$.
Moreover, this mild solution satisfies \eqref{nonlocal:FP} in $L^2(0,T,L^2(\Omega))$.
\end{proposition}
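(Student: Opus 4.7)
The plan is to apply the Banach fixed point theorem to $\Gamma$ on a closed ball of $E$ for a short time $\tau$, then upgrade the resulting mild solution to $L^2(0,\tau,H^2(\Omega))$ so that \eqref{nonlocal:FP} is actually satisfied in $L^2(0,\tau,L^2(\Omega))$, and finally extend the local solution to arbitrary $T>0$ via a priori $H^1$-estimates. The starting point is two Fourier-multiplier estimates for the heat kernel \eqref{G}: using $\sup_n\sigma_n^{2k}e^{-2\sigma_n^2 t}\le Ct^{-k}$, one obtains
\begin{equation*}
\|G(\cdot,t)*f\|_{H^1(\Omega)}\le \|f\|_{H^1(\Omega)},\qquad \|G(\cdot,t)*\partial_x f\|_{H^1(\Omega)}\le Ct^{-1/2}\|f\|_{H^1(\Omega)},
\end{equation*}
so the singularity produced by the derivative in the Duhamel integrand is integrable on $[0,t]$.

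Next I would estimate the nonlinearity $\mathcal N[u]:=u\,\partial_x(W*u)$. Writing $\partial_x(W*u)=(\partial_xW)*u$, Young's convolution inequality gives $\|(\partial_xW)*u\|_{L^p}\le \|\partial_xW\|_{L^1}\|u\|_{L^p}$ for every $p\in[1,\infty]$, and together with the one-dimensional embedding $H^1(\Omega)\hookrightarrow L^\infty(\Omega)$ and the product rule this yields
\begin{equation*}
\|\mathcal N[u]\|_{H^1(\Omega)}\le C(W)\|u\|_{H^1(\Omega)}^2,\qquad \|\mathcal N[u]-\mathcal N[v]\|_{H^1(\Omega)}\le C(W)\bigl(\|u\|_{H^1}+\|v\|_{H^1}\bigr)\|u-v\|_{H^1}.
\end{equation*}
Combining these bounds on the closed ball of radius $2R$ in $E$ produces
\begin{equation*}
\|\Gamma[u]\|_E\le \|\rho_0\|_{H^1}+C(W)R^2\sqrt{\tau},\qquad \|\Gamma[u]-\Gamma[v]\|_E\le C(W)R\sqrt{\tau}\,\|u-v\|_E,
\end{equation*}
so for $\tau=\tau(R,W)$ sufficiently small, $\Gamma$ is a contraction on this ball, and the Banach fixed point theorem yields a unique mild solution on $[0,\tau]$. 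A parallel Fourier-multiplier computation (maximal $L^2$-regularity of $\partial_t-\partial_x^2$ on the torus) upgrades $\rho$ to $L^2(0,\tau,H^2(\Omega))$, after which the fixed-point identity, differentiated appropriately in $x$, shows that \eqref{nonlocal:FP} is satisfied in $L^2(0,\tau,L^2(\Omega))$.

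To extend to arbitrary $T>0$, I would test \eqref{nonlocal:FP} successively against $\rho$ and $-\partial_x^2\rho$, using $\|\partial_x(W*\rho)\|_{L^\infty}\le \|\partial_xW\|_{L^1}\|\rho\|_{L^\infty}$, the one-dimensional Gagliardo--Nirenberg inequality $\|\rho\|_{L^\infty}\le C\|\rho\|_{L^2}^{1/2}\|\rho\|_{H^1}^{1/2}$, and Young's inequality to absorb the advective cross terms into the dissipations $\|\partial_x\rho\|_{L^2}^2$ and $\|\partial_x^2\rho\|_{L^2}^2$. Gronwall's inequality then gives $\|\rho(\cdot,t)\|_{H^1(\Omega)}\le C_0(R,T)$ on $[0,T]$, and iterating the local existence with a uniform step $\tau=\tau(C_0,W)$ completes the proof. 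The main obstacle is precisely this last a priori estimate: the term $\int\rho\,\partial_x(W*\rho)\,\partial_x\rho\,dx$ appearing in the energy identity is not sign-definite, and a naive pointwise bound would produce a Riccati-type inequality and possible finite-time blow-up. Closing the estimate for all $t\le T$ requires the $L^\infty$-control on $\partial_x(W*\rho)$ delivered by the hypothesis $W\in W^{1,1}_{\mathrm{per}}(\Omega)$, combined with the full parabolic dissipation, in order to keep the growth of $\|\rho\|_{H^1}$ at most exponential in $t$.
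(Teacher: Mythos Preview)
The paper does not actually prove this proposition; it merely states that it ``is proven using the standard argument of the fixed point theorem'' and then devotes Section~3 to the analogous result for \eqref{eq:system}. Your local-in-time part---heat-kernel multiplier bounds, the quadratic estimate on $\mathcal N[u]=u\,\partial_x(W*u)$ via Young's inequality and the embedding $H^1\hookrightarrow L^\infty$, and the contraction on a ball of radius $2R$---is correct and is exactly the template the paper follows for $\Phi$ in Lemmas~\ref{lem:bound_vj}--\ref{lem:cont_map}. The upgrade to $L^2(0,\tau,H^2)$ and the verification that the mild solution solves the PDE in $L^2_tL^2_x$ likewise mirror the paper's treatment in the proof of Corollary~\ref{cor:mildsol}.

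The genuine gap is in your global continuation. You correctly flag the a~priori $H^1$ bound as the main obstacle and correctly observe that a naive bound yields a Riccati inequality, but the resolution you sketch does not escape it: the control $\|\partial_x(W*\rho)\|_{L^\infty}\le\|\partial_xW\|_{L^1}\|\rho\|_{L^\infty}$ still feeds $\|\rho\|_{L^\infty}\le C\|\rho\|_{H^1}$ back into the estimate, so after testing against $\rho$ and $-\rho_{xx}$ and absorbing the top-order terms one is left with $\tfrac{d}{dt}\|\rho\|_{H^1}^2\le C\|\rho\|_{H^1}^4$, not a linear Gronwall inequality. Neither Gagliardo--Nirenberg nor the parabolic dissipation changes the homogeneity of this cubic cross term. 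The paper's own continuation argument (proof of Corollary~\ref{cor:mildsol}) is no more detailed---it simply says ``repeating to use Theorem~\ref{thm:1}''---so you are not missing something written there; but to close the bound one needs an additional ingredient that decouples the drift from $\|\rho\|_{H^1}$, for instance nonnegativity of $\rho$ together with mass conservation $\|\rho(\cdot,t)\|_{L^1}=\|\rho_0\|_{L^1}$, which in one dimension lets the Gagliardo--Nirenberg interpolation be taken against a fixed $L^1$ norm rather than the evolving $H^1$ norm and renders the energy inequality sub-quadratic.
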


Next, we will approximate the solution to \eqref{nonlocal:FP} with any integral kernel using that to a Keller--Segel system which is a local dynamics.
Introducing the auxiliary factors  $v_j^\eps = v_j^\eps(x,t), \ (j=1, \ldots, M)$, 
we consider the following Keller--Segel system in which the linear sum of $v_j^\eps$ is imposed in nonlocal term in \eqref{nonlocal:FP}:
\begin{equation}\tag{$\mbox{KS}_\eps$} \label{eq:system} 
\left\{
\begin{aligned}
	\frac{ \partial \rho^{\eps}}{ \partial t } & =  \frac{ \partial^2  \rho^{\eps} }{ \partial x^2 }  - \frac{ \partial  }{ \partial x } \Big(  \rho^{\eps}  \frac{ \partial  }{ \partial x } \sum_{j=1}^M a_j v_j^\eps \Big),\\
	\frac{ \partial v_j^\eps }{ \partial t } &= \frac{1}{\eps} \Big( d_j \frac{\partial^2 v_{j}^\eps }{ \partial x^2 }  - v_j^\eps + \rho^{\eps} \Big), \ (j=1,\cdots, M)
\end{aligned}
\right.  \ \text{in} \ \Omega\times (0,\infty).
\end{equation}
Here $0 < \eps \ll 1$ is a sufficiently small parameter, $d_j >0$ is the diffusion coefficient, and each $a_j \in \R$ is a constant that determines whether $v_j^\eps$ acts as an attractive or repulsive substance in the aggregation process of $\rho^\eps$.
Because the solutions to \eqref{eq:system} depends on $\eps$, we denoted them by $(\rho^\eps, v_j^\eps)$, respectively.
The same periodic boundary condition as that in \eqref{nonlocal:FP} is imposed in the equations of \eqref{eq:system} as follows:
\begin{equation*}
\left\{
\begin{aligned}
	&\rho^\eps(-L,t) = \rho^\eps(L, t), \ t>0, \\
	&\rho^\eps_x(-L, t) = \rho^\eps_x( L, t), \ t>0,\\
    &v_j^\eps(-L,t) = v_j^\eps(L,t), \ t>0, \\
    &(v_j^\eps)_x(-L,t) = (v_j^\eps)_x(L,t), \ t>0 \\    
\end{aligned}
\right.  
\end{equation*}
for $j=1, \ldots, M$.
Furthermore, we impose the following initial conditions as 
\begin{equation}\label{IC:KS}
    \rho^\eps (x, 0) :=\rho_0^\eps(x) = \rho _0(x), \quad v_j^\eps(x, 0) := ( v_{j})_0(x), \quad  (j=1, \ldots, M).
\end{equation}

\eqref{eq:system} is a Keller--Segel system with multiple components with the linear sensitivity function.
The role of $v_j^\eps$ can be distinguished by the sign of the coefficient  $a_j$.
If $a_j>0$, then $v_j^\eps$ is an attractive substance for $\rho^\eps$ and $\rho^\eps$ aggregates toward to the region in which the gradient of  $v_j^\eps$ is high independently on the value of its concentration.
In contrast, if $a_j<0$, $v_j^\eps$ acts as a repulsive substance for $\rho^\eps$, and $\rho^\eps$ migrates away from the region in which the gradient of  $v_j^\eps$ is high.

Introducing the following function:
\begin{align}
&G^\eps_j(x,t) := \frac{1 }{2L } \sum_{n \in \Z} e^{ - \frac{ d_j \sigma_n^2 + 1 }{ \eps }t } e^{i\sigma_n x}, \label{G_j}
\end{align}
we define the maps $\Psi_j: E \to E$ and $\Phi: E \to E$ as 
\begin{align*}
	&\Psi_j[u](x,t) : = (G_j^\eps * (v_j)_0)(x,t)
				+ \frac{1}{\eps} \int_0^t \int_\Omega G_j^\eps( x -y, t-s ) u (y, s) dy ds, \quad u \in E, \\
	&\Phi[u] (x,t) : = (G * \rho_0)(x,t) - \int_0^t  \int_\Omega  G(x-y, t-s) \Big( \frac{\partial}{\partial x} \Big( u  \frac{ \partial  }{ \partial x } \sum_{j=1}^M a_j \Psi_j[u]  \Big) \Big)(y, s) dy ds, \quad u \in E, \notag
\end{align*}
respectively.
We now  define the scalar integral equation of \eqref{eq:system} as $\rho^\eps = \Phi[\rho^\eps]$.
We say that a function $(u,\{v_j\}_j) \in C([0,T], H^1(\Omega))\times C( [0, T], C^2(\Omega) )$ for any $T>0$ is a mild solution to \eqref{eq:system} with \eqref{IC:KS}, provided $ u=\Phi[u]$ and $v_j=\Psi_j[v_j]$ for $j=1,\ldots,M$.

With the above settings, we obtain a unique solution to \eqref{eq:system} using the fixed point argument.
\begin{theorem}\label{thm:1}
Let $R>0$ be an arbitrary  real number and assume 
\begin{align}
	& \rho _0\in H^1(\Omega) \ \text{with } \left\| \rho_0 \right\|_{H^1(\Omega)}  < R, \label{ic:rho}\\
	& ( v_{j})_0\in C^2(\Omega), \quad  j=1, \ldots, M. \label{iv:vj}
\end{align}
Then  there exists a real number $\tau_0>0$ such that for any $\eps>0$ there exists a unique mild solution $(\rho^\eps, \{ v_j^\eps \}_{j=1}^M)$ to \eqref{eq:system} in $ C( [0, \tau_0], H^1(\Omega) )  \times C( [0, \tau_0], C^2(\Omega) ) $ satisfying
\begin{equation*}
	\left\| \rho^\eps \right\|_{ C([0, \tau_0], H^1 (\Omega))} <2R.
\end{equation*}
\end{theorem}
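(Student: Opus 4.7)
The plan is to apply Banach's contraction mapping theorem to the map $\Phi$ on the closed ball $B_{2R} := \{u \in E : \|u\|_E \le 2R\}$ inside $E = C([0,\tau_0], H^1(\Omega))$, and then recover $v_j^\eps = \Psi_j[\rho^\eps]$ from the fixed point $\rho^\eps = \Phi[\rho^\eps]$. The essential feature that must be preserved is that $\tau_0$ depends only on $R$, the coefficients $a_j$, $d_j$, and $\|(v_j)_0\|_{C^2}$, but not on $\eps$, so all estimates on $\Psi_j$ must be $\eps$-uniform.

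First I would derive $\eps$-uniform bounds on $\Psi_j$ by direct Fourier computation. Writing $\Psi_j[u]$ in the basis $\{e^{i\sigma_n x}\}$, the Duhamel part has multiplier
\begin{equation*}
\frac{1}{\eps}\int_0^t e^{-(d_j\sigma_n^2+1)(t-s)/\eps}\,ds = \frac{1 - e^{-(d_j\sigma_n^2+1)t/\eps}}{d_j\sigma_n^2+1},
\end{equation*}
so the $1/\eps$ prefactor cancels exactly and the multiplier is bounded by $(d_j\sigma_n^2+1)^{-1}$ independently of $\eps$ and $t$. Differentiating twice in $x$ introduces a factor $\sigma_n^2$ that is absorbed by the denominator, hence together with the smoothness of $(v_j)_0$ we obtain
\begin{equation*}
\|\Psi_j[u]\|_{C([0,\tau_0], C^2(\Omega))} \le C\bigl(\|(v_j)_0\|_{C^2(\Omega)} + \|u\|_E\bigr),
\end{equation*}
and, by linearity, the Lipschitz estimate $\|\Psi_j[u]-\Psi_j[\tilde u]\|_{C([0,\tau_0], C^2)} \le C\|u-\tilde u\|_E$, with $C$ independent of $\eps$ and $\tau_0$.

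Next I would treat $\Phi$ via heat-kernel smoothing of $G$. Integrating by parts in $y$ to move the $x$-derivative onto $G$ and applying Parseval gives
\begin{equation*}
\Big\|\int_0^t \partial_x G(\cdot,t-s) * g(\cdot,s)\,ds\Big\|_{H^1(\Omega)} \le C\,\tau_0^{1/2}\|g\|_{C([0,\tau_0], H^1(\Omega))},
\end{equation*}
while $\|G(\cdot,t) * \rho_0\|_{H^1} \le \|\rho_0\|_{H^1}$ for every $t$. Combining with the algebra property of $H^1(\Omega)$ in one dimension and the $\Psi_j$ estimates above, I obtain
\begin{equation*}
\|\Phi[u]\|_E \le R + C_1\tau_0^{1/2}\|u\|_E\sum_{j=1}^M|a_j|\bigl(\|(v_j)_0\|_{C^2(\Omega)}+\|u\|_E\bigr),
\end{equation*}
and an analogous bilinear bound on the difference $\Phi[u]-\Phi[\tilde u]$. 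Choosing $\tau_0$ sufficiently small, depending only on the stated data, makes $\Phi$ map $B_{2R}$ strictly into itself and renders it a $\tfrac{1}{2}$-contraction; Banach's theorem then produces the unique fixed point $\rho^\eps \in B_{2R}$ with $\|\rho^\eps\|_E < 2R$, and $v_j^\eps := \Psi_j[\rho^\eps]$ completes the mild solution.

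The chief obstacle is Step 1: the linear equation for $v_j^\eps$ is stiff as $\eps\to 0$, so one must exhibit the precise cancellation between the $1/\eps$ prefactor in $\Psi_j$ and the fast decay rate $(d_j\sigma_n^2+1)/\eps$ in order to extract constants independent of $\eps$. Once this cancellation is in hand, the remaining estimates on $\Phi$ are a standard parabolic fixed-point argument that does not see $\eps$ at all, and the existence time $\tau_0$ is manifestly uniform in $\eps$.
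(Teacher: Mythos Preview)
Your proposal is correct and follows essentially the same route as the paper: both arguments apply Banach's fixed-point theorem to $\Phi$ on a ball of radius $2R$ in $C([0,\tau_0],H^1(\Omega))$, and both isolate as the key step the $\eps$-uniform bound on $\Psi_j$ coming from the exact cancellation $\frac{1}{\eps}\int_0^t e^{-(d_j\sigma_n^2+1)(t-s)/\eps}\,ds \le (d_j\sigma_n^2+1)^{-1}$. The paper packages this as Lemmas~\ref{lem:bound_vj}--\ref{lem:cont_map}, using sup-norm bounds on $(\bar v_j^\eps)_x$, $(\bar v_j^\eps)_{xx}$ paired with $L^2$ bounds on $\phi$, $\phi_x$ rather than invoking the $H^1$ algebra property directly, but the content and the resulting $\eps$-independent $\tau_0$ are the same.
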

We can extend the existence time of the solution to an arbitrary time $T$ as follows:
\begin{corollary}\label{cor:mildsol}
Suppose the same assumption of Theorem \ref{thm:1}.
For any $T>0$, there exists a constant $\tilde {C}_0>0$ depending on $R$ and $T$ but independent of $\eps$ such that there exists a unique mild solution $(\rho^\eps, \{v_j^\eps\}_{j=1}^M)$ to \eqref{eq:system} in $ C( [0, T], H^1(\Omega) ) \cap L^2( 0, T, H^2(\Omega) ) \cap H^1(0,T,L^2(\Omega)) \times C( [0, T], C^2(\Omega) )\cap L^2( 0, T, H^3(\Omega) )$ satisfying
\begin{equation*}
	 \left\| \rho^\eps \right\|_{ C([0,T], H^1 (\Omega))} < \tilde {C}_0.
\end{equation*}
Moreover, this mild solution $(\rho^\eps, \{v_j^\eps\}_{j=1}^M)$ satisfies the system \eqref{eq:system} in $L^2(0,T,L^2(\Omega))\times L^2(0,T,C(\Omega))$.
\end{corollary}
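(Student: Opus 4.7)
The overall strategy is to combine the local existence of Theorem \ref{thm:1} with an $\eps$-uniform a priori $H^1$ bound on $\rho^\eps$, iterate to cover $[0,T]$, and then bootstrap the remaining regularity from the equations themselves.

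The first step is to extract $\eps$-uniform control on the $v_j^\eps$. Multiplying the second equation of \eqref{eq:system} by $v_j^\eps$ and integrating over $\Omega$, using the periodic boundary condition and Young's inequality, yields
\[
\eps\frac{d}{dt}\|v_j^\eps\|_{L^2}^2 + 2d_j\|(v_j^\eps)_x\|_{L^2}^2 + \|v_j^\eps\|_{L^2}^2 \leq \|\rho^\eps\|_{L^2}^2.
\]
Applying the integrating factor $e^{t/\eps}$, the identity $\eps^{-1}\int_0^t e^{(s-t)/\eps}\,ds = 1-e^{-t/\eps}\leq 1$ produces a bound $\|v_j^\eps(t)\|_{L^2}^2 \leq \|(v_j)_0\|_{L^2}^2 + \sup_{s\leq t}\|\rho^\eps(s)\|_{L^2}^2$ with a constant independent of $\eps$. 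Differentiating the equation in $x$ and repeating the argument controls $\|(v_j^\eps)_x\|_{L^2}$, and testing the twice-differentiated equation against $(v_j^\eps)_{xx}$ (moving one derivative onto $v_j^\eps$ by integration by parts on the source) produces an analogous control on $\|(v_j^\eps)_{xx}\|_{L^2}$. The upshot is an $\eps$-independent estimate $\|v_j^\eps\|_{C([0,t],H^2)} \leq C\bigl(\|(v_j)_0\|_{H^2} + \sup_{s\leq t}\|\rho^\eps(s)\|_{H^1}\bigr)$.

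Next I would carry out an $H^1$ energy estimate for $\rho^\eps$. Testing the first equation of \eqref{eq:system} against $\rho^\eps$ and the $x$-differentiated equation against $-\rho_{xx}^\eps$, integrating by parts and using the one-dimensional Sobolev embedding $H^1(\Omega)\hookrightarrow L^\infty(\Omega)$ to handle $\|\rho^\eps\|_{L^\infty}$ together with the previous bound on $\sum_j a_j v_j^\eps$, one absorbs the dissipative terms on the left and reaches
\[
\frac{d}{dt}\|\rho^\eps\|_{H^1}^2 + \|\rho_x^\eps\|_{L^2}^2 + \|\rho_{xx}^\eps\|_{L^2}^2 \leq C_1(R,T)\,\Phi\bigl(\sup_{s\leq t}\|\rho^\eps(s)\|_{H^1}^2\bigr),
\]
where $C_1$ and the polynomial $\Phi$ are independent of $\eps$. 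A Gronwall-type argument for $M(t):=\sup_{s\leq t}\|\rho^\eps(s)\|_{H^1}^2$ yields $\tilde C_0=\tilde C_0(R,T)$, independent of $\eps$, with $\|\rho^\eps(t)\|_{H^1}<\tilde C_0$ on the interval of existence. Because the local existence time $\tau_0$ in Theorem \ref{thm:1} depends only on an $H^1$ bound of the datum, this uniform a priori bound allows us to restart the fixed-point construction finitely many times and cover any prescribed $[0,T]$; the time-integrated form of the inequality simultaneously gives $\rho^\eps \in L^2(0,T,H^2(\Omega))$.

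The remaining regularities follow by reading off the equations. From $\rho^\eps_t = \rho_{xx}^\eps - \bigl(\rho^\eps\sum_j a_j (v_j^\eps)_x\bigr)_x$ together with $\rho^\eps\in L^2(0,T,H^2)$ and the $H^2$-bound on the $v_j^\eps$, one obtains $\rho^\eps_t\in L^2(0,T,L^2)$, hence $\rho^\eps\in H^1(0,T,L^2)$. For each fixed $\eps>0$ the equation for $v_j^\eps$ is a linear parabolic equation with source $\rho^\eps\in L^2(0,T,H^1)$; standard linear parabolic regularity combined with the embeddings $H^2\hookrightarrow C^1$ and $H^3\hookrightarrow C^2$ in one dimension gives $v_j^\eps\in C([0,T],C^2(\Omega))\cap L^2(0,T,H^3(\Omega))$, and the mild identity then promotes to the strong formulation in $L^2(0,T,L^2(\Omega))\times L^2(0,T,C(\Omega))$. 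The main obstacle I anticipate is the first step: making the $v_j^\eps$ estimates genuinely $\eps$-uniform despite the $1/\eps$ factor in front of the dissipation. The Gronwall-multiplier identity above is what secures this; once it is in hand, the rest of the argument is routine one-dimensional parabolic energy analysis.
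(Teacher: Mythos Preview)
Your route differs from the paper's. The paper does not re-derive $\eps$-uniform control on $v_j^\eps$ by the energy identities in your Step~1; that control is already supplied by Lemma~\ref{lem:bound_vj}, which bounds $\|(v_j^\eps)_x\|_{C(\Omega)}$ and $\|(v_j^\eps)_{xx}\|_{C(\Omega)}$ in terms of $\|\rho^\eps\|_{H^1}$ via the explicit Fourier representation of $G_j^\eps$, with constants independent of $\eps$. Nor does the paper run an $H^1$ energy estimate for $\rho^\eps$: it simply iterates Theorem~\ref{thm:1}, relying on the fact (established in Section~\ref{sec:3}) that the local time $\tau_0$ there is independent of $\eps$, and then obtains the higher regularity in $L^2(0,T,H^2)$, $H^1(0,T,L^2)$ and $L^2(0,T,H^3)$ by differentiating the mild-solution identities $\rho^\eps=\Phi[\rho^\eps]$ and $v_j^\eps=\Psi_j[\rho^\eps]$ term by term rather than by reading the regularity off the PDE. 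Your direct energy approach is a legitimate alternative and is closer in flavour to the singular-limit computations in Section~\ref{sec:4}.

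There is, however, a genuine gap in your continuation step. The differential inequality you reach has the form $\frac{d}{dt}M\le C\,\Phi(M)$ with $M=\|\rho^\eps\|_{H^1}^2$ and $\Phi$ at least quadratic, because the advection term $\rho^\eps\,\partial_x\!\sum_j a_j v_j^\eps$ combined with your own bound $\|v_j^\eps\|_{H^2}\lesssim 1+\sup_s\|\rho^\eps\|_{H^1}$ makes the right-hand side cubic in $\|\rho^\eps\|_{H^1}$. A Gronwall comparison against a superlinear ODE gives a bound only up to some finite $T^\ast=T^\ast(R)$, not for arbitrary $T$; the phrase ``yields $\tilde C_0(R,T)$ \dots\ on the interval of existence'' hides this, and the subsequent claim that one can restart finitely many times to reach any prescribed $T$ does not follow. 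To close the argument you would need an additional one-dimensional a~priori ingredient (mass conservation and nonnegativity, or an entropy/free-energy estimate for the Keller--Segel structure), or else a reason why $\tau_0$ in Theorem~\ref{thm:1} does not degenerate under iteration. The paper's own proof is admittedly terse on exactly this point---it simply asserts that the concatenation works---so you are not missing something the paper actually supplies; but as written your outline does not secure existence on an arbitrary $[0,T]$ either.
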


In order to show the relationship between the solutions of nonlocal Fokker--Planck equation \eqref{nonlocal:FP} with any potential $W$ and the Keller--Segel system \eqref{eq:system}, we first investigate the relationship between the solution to \eqref{nonlocal:FP} with the potential provided by $W= \sum_{j=1}^M a_j k_j$ and the solution to \eqref{eq:system}.
\begin{theorem}\label{thm:order_est}
Let $M$ be an arbitrary fixed natural number, and
 $\rho$ be   a solution to \eqref{nonlocal:FP} equipped with $W = \sum_{j=1}^M a_j k_j$ and the initial value  $\rho_0 \in C^2(\Omega)$.
 Let  $\rho^\eps$ be a solution to \eqref{eq:system} equipped with 
\begin{gather}
	 \rho ^\eps _0 =  \rho _0,  \label{iv:rho}\\
	\Big( (v^\eps_{1})_0, \ldots, (v^\eps_{M})_0 \Big) = (  k_1*\rho _0, \ldots,  k_M*\rho _0) .\label{conc_init_data:v_j}
\end{gather}
 Then, for any $\eps>0$ and $T>0$, there exist positive constants $C_1$ and $C_2$ that depend on $a_j$, $d_j$ and $T$, but are independent of $\eps$
 such that 
\begin{align}
 &\left\| \rho^\eps - \rho \right\|_{ C([0,T], H^1(\Omega))} + \left\| \rho^\eps - \rho \right\|_{ L^2(0,T, H^2(\Omega))} \le C_1\eps,\label{ineq:singlar_rho}\\
 &\left\| v_j^\eps - k_j*\rho \right\|_{ C([0,T], H^1(\Omega))} + \left\| v_j^\eps - k_j*\rho \right\|_{ L^2(0,T, H^2(\Omega))}\le C_2 \eps.\label{ineq:singlar_v}
\end{align}
\end{theorem}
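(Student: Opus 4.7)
The plan is to introduce the error variables
\[
\eta^\eps := \rho^\eps - \rho, \qquad w_j := k_j * \rho, \qquad \zeta_j^\eps := v_j^\eps - w_j,
\]
and derive a coupled system for $(\eta^\eps, \zeta_j^\eps)$. Since $k_j$ is the periodic fundamental solution of $-d_j u'' + u = \delta$, the auxiliary function $w_j$ satisfies $-d_j \partial_{xx} w_j + w_j = \rho$ pointwise, and hence
\[
\eps \partial_t \zeta_j^\eps = d_j \partial_{xx}\zeta_j^\eps - \zeta_j^\eps + \eta^\eps - \eps \partial_t w_j,
\]
with the crucial observation that $\zeta_j^\eps(\cdot, 0) \equiv 0$ by the initial data prescription \eqref{conc_init_data:v_j}. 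The $\eta^\eps$ equation, obtained by subtracting \eqref{nonlocal:FP} (with $W = \sum_j a_j k_j$) from the first equation of \eqref{eq:system}, reads
\[
\partial_t \eta^\eps = \partial_{xx}\eta^\eps - \partial_x\Bigl(\eta^\eps\, \partial_x \textstyle\sum_j a_j v_j^\eps \Bigr) - \partial_x\Bigl(\rho\, \partial_x \textstyle\sum_j a_j \zeta_j^\eps \Bigr).
\]
So the discrepancy is entirely driven by the explicit $O(\eps)$ source $-\eps \partial_t w_j$, which is exactly the mechanism one expects to yield linear convergence in $\eps$.

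Next I would run a standard energy estimate in $H^1(\Omega)$. Testing the $\zeta_j^\eps$ equation against $\zeta_j^\eps - \partial_{xx}\zeta_j^\eps$ and integrating by parts (using periodicity) produces
\[
\tfrac{\eps}{2}\tfrac{d}{dt}\|\zeta_j^\eps\|_{H^1}^2 + d_j \|\partial_x \zeta_j^\eps\|_{H^1}^2 + \|\zeta_j^\eps\|_{H^1}^2 \le \tfrac{1}{2}\|\eta^\eps\|_{H^1}^2 + C\|\zeta_j^\eps\|_{H^1}^2 + C\eps^2 \|\partial_t w_j\|_{H^1}^2,
\]
after Cauchy--Schwarz and Young. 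For $\eta^\eps$, testing against $\eta^\eps - \partial_{xx}\eta^\eps$ and exploiting Corollary \ref{cor:mildsol} to control $\|\rho^\eps\|_{H^1}$, $\|v_j^\eps\|_{C^2}$ uniformly in $\eps$, together with the regularity of $\rho$ as a solution of \eqref{nonlocal:FP} starting from $\rho_0 \in C^2(\Omega)$, gives
\[
\tfrac{1}{2}\tfrac{d}{dt}\|\eta^\eps\|_{H^1}^2 + \|\partial_x \eta^\eps\|_{H^1}^2 \le C\|\eta^\eps\|_{H^1}^2 + C\textstyle\sum_j \|\zeta_j^\eps\|_{H^2}^2.
\]
Adding these and absorbing $\|\zeta_j^\eps\|_{H^2}^2$ into the dissipative $d_j \|\partial_x \zeta_j^\eps\|_{H^1}^2 + \|\zeta_j^\eps\|_{H^1}^2$ on the left, one obtains a differential inequality to which Gronwall applies, yielding
\[
\|\eta^\eps\|_{C([0,T];H^1)}^2 + \|\eta^\eps\|_{L^2(0,T;H^2)}^2 + \textstyle\sum_j\bigl(\|\zeta_j^\eps\|_{C([0,T];H^1)}^2 + \|\zeta_j^\eps\|_{L^2(0,T;H^2)}^2\bigr) \le C\eps^2 \textstyle\sum_j \|\partial_t w_j\|_{L^2(0,T;H^1)}^2,
\]
which is exactly \eqref{ineq:singlar_rho}--\eqref{ineq:singlar_v} after taking square roots.

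The main obstacle I anticipate is twofold. First, one must verify that $\partial_t w_j = k_j * \partial_t \rho$ lies in $L^2(0,T;H^1)$ with an $\eps$-independent bound; this requires additional regularity of the solution $\rho$ to \eqref{nonlocal:FP} beyond what is given by the existence proposition, which I would obtain by differentiating \eqref{nonlocal:FP} in time and using parabolic smoothing together with $\rho_0 \in C^2(\Omega)$ and the smoothing effect of convolution with $k_j$. Second, the quasilinear term $\partial_x(\eta^\eps \partial_x \sum_j a_j v_j^\eps)$ in the $\eta^\eps$ equation is the delicate one in the energy estimate: after integration by parts it produces products like $\int \eta^\eps \partial_{xx} v_j^\eps \cdot \partial_x \eta^\eps$ which must be absorbed via Gagliardo--Nirenberg and the uniform $C^2$ bound on $v_j^\eps$ from Corollary \ref{cor:mildsol}. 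Once this is done cleanly, the singular $1/\eps$ coefficient in the $\zeta_j^\eps$ equation becomes a friend rather than a foe, supplying the damping that forces $\zeta_j^\eps$ to track $k_j * \rho$ with error precisely of order $\eps$.
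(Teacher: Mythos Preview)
Your plan is sound and would succeed, but it differs from the paper's argument in one structural choice that is worth noting. You set $\zeta_j^\eps = v_j^\eps - k_j*\rho$, which couples the $\zeta_j^\eps$ equation to $\eta^\eps$ through the extra source $\eta^\eps$ on the right. The paper instead sets $V_j^\eps := v_j^\eps - k_j*\rho^\eps$ (convolving with the \emph{Keller--Segel} density, not the limiting one). With that choice the $\rho^\eps$'s cancel and the $V_j^\eps$ equation reads $\eps\partial_t V_j^\eps = d_j\partial_{xx}V_j^\eps - V_j^\eps - \eps\, k_j*\rho_t^\eps$, with no $\eta^\eps$ term at all. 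This decouples the analysis: one first proves $\|V_j^\eps\|_{C([0,T];H^1)} + \|V_j^\eps\|_{L^2(0,T;H^2)} \le C\eps$ using only the uniform-in-$\eps$ bounds on $\rho^\eps$ from Corollary~\ref{cor:mildsol} (the forcing $k_j*\rho_t^\eps$ is controlled by rewriting $\rho_t^\eps$ via the first equation of \eqref{eq:system} and pushing derivatives onto $k_j$, Lemma~\ref{lemm:k_j*rho_t}), and only then turns to $U^\eps=\rho^\eps-\rho$, now with the $V_j^\eps$ terms acting as known $O(\eps)$ data. The final bound on $v_j^\eps - k_j*\rho$ follows by triangle inequality.

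What the paper's decoupling buys is that it never needs time-regularity of the limiting solution $\rho$: your approach requires $\partial_t(k_j*\rho)\in L^\infty(0,T;H^1)$ (note: $L^2$ in time is not quite enough for the sup-in-time bound on $\zeta_j^\eps$, since after your coupled Gronwall you only get $\eps\|\zeta_j^\eps\|_{C;H^1}^2\le C\eps^2$; you must then feed $\|\eta^\eps\|_{H^1}\le C\eps$ back into the $\zeta_j^\eps$ equation and run the singular Gronwall once more, for which a pointwise-in-time bound on $\partial_t w_j$ is needed). This is obtainable from $\rho_0\in C^2$ and parabolic smoothing as you say, but it is an extra step the paper sidesteps. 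Conversely, your route is slightly more direct in that $\zeta_j^\eps$ is already the quantity appearing in \eqref{ineq:singlar_v}, so no final triangle-inequality patching is needed. Both arguments are correct; the paper's is a bit cleaner because the estimates cascade rather than couple.
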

\begin{remark}
Using the Sobolev embedding theorem for the first terms on the left-hand sides of \eqref{ineq:singlar_rho} and \eqref{ineq:singlar_v} implies that
    \begin{align*}
 &\left\| \rho^\eps - \rho \right\|_{ C([0,T], C(\Omega))}  \le \tilde{C}_1\eps,\\
 &\left\| v_j^\eps - k_j*\rho \right\|_{ C([0,T], C(\Omega))} \le \tilde{C}_2 \eps,
\end{align*}
where $\tilde{C}_1$ and $\tilde{C}_2$ are obtained  by multiplying the constants of the Sobolev embedding theorem by $C_1$ and $C_2$, respectively.
\end{remark}
The first convergence in Theorem \ref{thm:order_est} shows not only that the solution $\rho^\eps$ to \eqref{eq:system} is sufficiently close to that to \eqref{nonlocal:FP} with $W= \sum_{j=1}^M a_j k_j$ when $\eps$ is very small, but also that the convergence rate is of the order of $\eps$.
The second convergence shows that the solution of the auxiliary substances $v_j^\eps$ is also extremely close to $k_j*\rho$ as $\eps$ tends to $0$.
The proof of this theorem is presented in Section \ref{sec:4}.

Using the convergence of Theorem \ref{thm:order_est}, we can approximate the solution to \eqref{nonlocal:FP} with any even smooth kernel $W$ as that to \eqref{eq:system} with the specified parameters $\{d_j, a_j \}_{j=1}^M$.
Indeed, the parameters $\{ a_j\}_{j=1}^M$ are determined by the shape of $W$ using Theorem \ref{thm:HCSE} in Section \ref{sec:cof_aj}.
Using the interpolation polynomial with the Chebyshev nodes, we can demonstrate the convergence in Theorem \ref{thm:HCSE}.
The explicit formula for the coefficients in the Lagrange interpolation polynomial for an arbitrarily given function is constructed in Proposition \ref{prop:lag_coef} for the proof of Theorem \ref{thm:HCSE}.
Although Theorem \ref{thm:HCSE} and Proposition \ref{prop:lag_coef} are some of our main results, they are presented in Section \ref{sec:cof_aj} for convenience.
Setting the diffusion coefficients of $v_j^\eps$ as 
\begin{equation}\label{set:dj}
	d_j = \frac{1}{(j-1)^2}, \ j=2,\cdots, M
\end{equation}
and defining $d_1$ as a sufficiently large number or limit of infinity, we obtain 
\begin{equation*}
	k_j(x) = \frac{ j-1}{2 \sinh (j-1) L }  \cosh (j-1)(L - \left| x \right|).
\end{equation*}
Note that $k_1=1/(2L)$ if the limit of $d_1\to \infty$ is considered. 
Because the profile of the fundamental solution $k_j$ is unimodal even if the value of $j$ changes, it seems to be difficult to approximate any potential $W$ by the linear sum of  $k_j$.
However, we can obtain the following corollary:
\begin{corollary}\label{cor:w-kj}
Assume that $W \in C^\infty(\Omega)$ and $W$ is even.
    For any $\eps >0$ there exists $M\in \N$, constants $\{a_j\}_{j=1}^M$ and a positive constant $C_{W}$ such that 
    \begin{equation}\label{eq:w-kj}
        \Big\| W - \sum_{j=1}^M a_j k_j \Big\|_{C(\Omega)} \le C_W \eps .
    \end{equation}
\end{corollary}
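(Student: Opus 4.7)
The plan is to convert the statement into a one-variable polynomial approximation problem on a compact interval and then invoke Theorem \ref{thm:HCSE}. Because $W$ is even, its values on $\Omega$ are determined by those on $[0,L]$, so I introduce the substitution $t:=\cosh(L-|x|)$, which is a strict bijection from $[0,L]$ onto $[1,\cosh L]$. Setting $\tilde W(t):=W(x)$ through this substitution converts the problem into one of approximating $\tilde W$ on $[1,\cosh L]$ by polynomials in $t$.

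The bridge between $k_j$ and polynomials in $t$ is the identity $T_n(\cosh \eta)=\cosh(n\eta)$, valid for all $\eta\in\R$. Combined with the choice \eqref{set:dj}, it gives for $j\ge 2$
\[
k_j(x)=\frac{j-1}{2\sinh((j-1)L)}\,T_{j-1}(\cosh(L-|x|))=\frac{j-1}{2\sinh((j-1)L)}\,T_{j-1}(t),
\]
while $k_1\equiv 1/(2L)=T_0(t)/(2L)$. Consequently, finding $\{a_j\}_{j=1}^M$ with $\|W-\sum_j a_j k_j\|_{C(\Omega)}\le C_W\eps$ is, via the linear rescaling $b_1=a_1/(2L)$ and $b_j=a_j(j-1)/(2\sinh((j-1)L))$ for $j\ge 2$, equivalent to finding $\{b_j\}_{j=1}^M$ with
\[
\Big\|\tilde W-\sum_{j=1}^M b_j T_{j-1}\Big\|_{C([1,\cosh L])}\le C_W\eps.
\]

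The main obstacle is to verify enough regularity of $\tilde W$ on $[1,\cosh L]$ for Theorem \ref{thm:HCSE} (based on Chebyshev-node Lagrange interpolation) to produce uniform convergence. At interior points and at $t=\cosh L$ (i.e.\ $x=0$), smoothness of $\tilde W$ is immediate from the chain rule because $dt/d|x|=-\sinh(L-|x|)$ is non-vanishing there and $W$, being smooth and even, is smooth in $|x|$ near $0$. The delicate endpoint is $t=1$ (i.e.\ $|x|=L$), where $dt/d|x|$ vanishes. This is handled by using that $W$ is smooth and, through its role as a periodic convolution kernel in \eqref{nonlocal:FP}, extends $2L$-periodically; combined with evenness this forces every odd-order derivative of $W$ at $\pm L$ to vanish. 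Writing $\delta=L-|x|$, one has $t-1=\delta^2/2+O(\delta^4)$, so the implicit function theorem expresses $\delta^2$ as a smooth function of $t-1$ near the origin; because only even powers of $\delta$ appear in the Taylor expansion of $W$ about $|x|=L$, $\tilde W$ extends smoothly across $t=1$, yielding $\tilde W\in C^\infty([1,\cosh L])$.

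With this regularity in hand, Theorem \ref{thm:HCSE} applies to $\tilde W$ on $[1,\cosh L]$ (after an affine rescaling from $[1,\cosh L]$ to $[-1,1]$ if the theorem is phrased on the standard Chebyshev interval), delivering for any $\eps>0$ an integer $M$ and coefficients $\{b_j\}_{j=1}^M$ for which the Chebyshev-interpolation error is bounded by $C_W\eps$ in $C([1,\cosh L])$. Defining $\{a_j\}_{j=1}^M$ by inverting the linear rescaling above and pulling the inequality back through $t=\cosh(L-|x|)$ gives precisely \eqref{eq:w-kj}. Everything apart from the smoothness verification at $t=1$ reduces to a routine change of basis between polynomials in $t$ and shifted Chebyshev polynomials together with a bijective change of variable, neither of which affects the supremum norm.
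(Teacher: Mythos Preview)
Your approach is exactly the paper's: invoke Theorem~\ref{thm:HCSE} to approximate $W$ uniformly by $\sum_{j=0}^{n}\alpha_j^{n}\cosh j(L-|\cdot|)$, then use the rescaling \eqref{set:aj} (which you rediscover as $b_1=a_1/(2L)$ and $b_j=a_j(j-1)/(2\sinh((j-1)L))$ for $j\ge 2$) to rewrite this sum as $\sum_j a_j k_j$. The paper's own proof is a one-line reference to Theorem~\ref{thm:HCSE} and \eqref{set:aj}.

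Your proposal actually goes further than the paper on one point: you carefully verify that $\tilde W=f$ (the paper's $f$ in \eqref{def:f}) is $C^\infty$ up to the endpoint $t=1$, using evenness and $2L$-periodicity of $W$ to force all odd derivatives of $W$ to vanish at $\pm L$. The paper never checks this, yet Theorem~\ref{thm:HCSE} requires bounded $f^{(n+1)}$ on $[1,\cosh L]$, and without periodicity a smooth $W$ with $W'(L)\neq 0$ would give $f$ only a $\sqrt{t-1}$ behaviour there. So your argument fills a genuine gap in the paper's terse proof. One minor remark: the error bound in Theorem~\ref{thm:HCSE} does not by itself guarantee convergence as $n\to\infty$ for arbitrary $C^\infty$ data (the $\max|f^{(n+1)}|$ could in principle outgrow $2^n(n+1)!$); once you have $f\in C^\infty$, the cleanest way to conclude is Jackson's theorem together with the $O(\log n)$ Lebesgue constant of Chebyshev nodes, which gives superpolynomial decay of the interpolation error.
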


Because we estimate the error of the solutions of the two nonlocal Fokker--Planck equations with $W=\sum_{j=1}^M a_j k_j$ and any given potential $W(x)$, we prepare the following lemma.
\begin{lemma}\label{lemm:rho1_rho2}
Suppose that $w_1, w_2 \in W_{\mathrm{per}}^{1,1}(\Omega)$ and let $\rho_j, \ (j=1,2)$ denote the solution to
\begin{equation*}\tag{$\mbox{P}_j$} 
\left\{
\begin{aligned}
	&\frac{ \partial \rho_j}{ \partial t } =  \frac{ \partial^2  \rho_j }{ \partial x^2 }  - \frac{ \partial  }{ \partial x } \Big(  \rho_j  \frac{ \partial  }{ \partial x } w_j*\rho_j \Big) \ \text{in} \ \Omega\times (0,\infty),\\
	&\rho_j(x, 0)=\rho_0\in H^1(\Omega),\\
\end{aligned}
\right.
\end{equation*}
respectively.
Then for any $T>0$, there exists a positive constant ${\tilde C_T}$ such that 
\begin{equation}\label{ineq:rho1_rho2}
	\left\| \rho_1  - \rho_2 \right\|_{ C([0,T],L^2 (\Omega))}^2  + \left\| \rho_1 - \rho_2  \right\|_{ L^2 (0,T,H^1(\Omega))}^2 \le {\tilde C_T} \left\| w_1 - w_2 \right\|_{ L^1 (\Omega)}^2.
\end{equation}
\end{lemma}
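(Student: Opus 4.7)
The plan is to derive an energy estimate for the difference $U := \rho_1 - \rho_2$, which satisfies
\begin{equation*}
\partial_t U = \partial_x^2 U - \partial_x\!\bigl( \rho_1 \partial_x(w_1*\rho_1) - \rho_2 \partial_x(w_2*\rho_2) \bigr)
\end{equation*}
with periodic boundary conditions and vanishing initial datum $U(\cdot,0)=0$. The central algebraic step is the telescoping identity
\begin{equation*}
\rho_1 \partial_x(w_1*\rho_1) - \rho_2 \partial_x(w_2*\rho_2) = U\,\partial_x(w_1*\rho_1) + \rho_2\,\partial_x(w_1*U) + \rho_2\,\partial_x\!\bigl((w_1-w_2)*\rho_2\bigr),
\end{equation*}
which isolates the discrepancy between the two potentials in the third summand.

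Multiplying the equation for $U$ by $U$, integrating over $\Omega$, and using integration by parts (legitimate by periodicity and the $L^2(0,T,H^2(\Omega))$ regularity of $\rho_j$ furnished by the earlier existence proposition), I would obtain
\begin{equation*}
\tfrac{1}{2}\tfrac{d}{dt}\|U\|_{L^2}^2 + \|U_x\|_{L^2}^2 = I_1 + I_2 + I_3,
\end{equation*}
where $I_k$ is the integral of $U_x$ against the $k$th summand above. Each $I_k$ is then controlled via Young's convolution inequality combined with the Sobolev embedding $H^1(\Omega) \hookrightarrow L^\infty(\Omega)$ (valid in one dimension) and the uniform-in-time bound $\sup_{[0,T]}\|\rho_j\|_{H^1} \le C$ granted by the existence theory. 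For $I_1$ and $I_2$ I would pass the derivative inside the convolutions as $\partial_x(w_1*\rho_1)=w_1'*\rho_1$ and $\partial_x(w_1*U)=w_1'*U$, obtaining $|I_1|+|I_2| \le C\|U_x\|_{L^2}\|U\|_{L^2}$.

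The decisive observation concerns $I_3$: since the conclusion \eqref{ineq:rho1_rho2} features $\|w_1-w_2\|_{L^1}$ and \emph{not} its derivative, I would instead shift the derivative onto $\rho_2$ via $\partial_x((w_1-w_2)*\rho_2) = (w_1-w_2)*\rho_{2,x}$ and apply Young's inequality to get
\begin{equation*}
|I_3| \le C\|U_x\|_{L^2}\,\|\rho_{2,x}\|_{L^2}\,\|w_1-w_2\|_{L^1}.
\end{equation*}
Absorbing half of $\|U_x\|_{L^2}^2$ into the left-hand side through Cauchy--Schwarz--Young then yields
\begin{equation*}
\tfrac{d}{dt}\|U\|_{L^2}^2 + \|U_x\|_{L^2}^2 \le C\|U\|_{L^2}^2 + C\|\rho_{2,x}\|_{L^2}^2\,\|w_1-w_2\|_{L^1}^2 .
\end{equation*}

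Since $U(\cdot,0)=0$ and $\int_0^T \|\rho_{2,x}(s)\|_{L^2}^2\,ds$ is finite by the $C([0,T],H^1(\Omega))$ regularity of $\rho_2$, integrating in time and applying Grönwall's lemma will give the $C([0,T],L^2)$ bound in \eqref{ineq:rho1_rho2}, after which a further time integration delivers the $L^2(0,T,H^1)$ part. The main technical subtlety is precisely the derivative placement in $I_3$: one must avoid any appearance of $(w_1-w_2)'$, which forces the use of the parabolic regularity of $\rho_2$ to soak up the $\rho_{2,x}$ factor generated by the shift.
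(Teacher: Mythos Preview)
Your proposal is correct and follows the same energy--Gr\"onwall strategy as the paper: both form the $L^2$ energy identity for $U=\rho_1-\rho_2$, split the nonlinear difference by a telescoping identity, and—crucially—shift the derivative in the $(w_1-w_2)$ term onto $\rho_2$ so that only $\|w_1-w_2\|_{L^1}$ appears. The only cosmetic differences are the telescoping order (the paper writes $\rho_1\,w_{1,x}*U+\rho_1\,(w_1-w_2)_x*\rho_2+U\,w_{2,x}*\rho_2$) and the placement of derivatives in the two non-critical terms; your choice to keep $w_1'$ there is harmless since $w_1\in W^{1,1}_{\mathrm{per}}$ and arguably yields a cleaner absorption than the paper's version.
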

\begin{remark}\label{rem:rho1_rho2}
    Under the same assumptions, the assertion \eqref{ineq:rho1_rho2} in Lemma \ref{lemm:rho1_rho2} can be improved into
    \begin{equation*}
	\left\| \rho_1  - \rho_2 \right\|_{ C([0,T],H^1 (\Omega))}^2  + \left\| \rho_1 - \rho_2  \right\|_{ L^2 (0,T,H^2(\Omega))}^2 \le {\tilde C_T} \left\| w_1 - w_2 \right\|_{ W^{1,1} (\Omega)}^2.
    \end{equation*}
\end{remark}
This lemma shows that the difference between the solutions to the two Fokker--Planck equations is bounded by the difference between the two potentials.
The proof is presented in Subsection \ref{sec:order}

Referring to $\{ \alpha_j^{M-1}\}$ in Theorem \ref{thm:HCSE}, and putting
\begin{equation}\label{set:aj}
a_{j}=\left\{
    \begin{aligned}
        &2L\alpha_0^{M-1}, \quad (j=1),\\
        &2 \alpha_{j-1}^{M-1} \sinh ((j-1) L)/ (j-1), \quad (j=2,\ldots M),
    \end{aligned}
    \right.
\end{equation}
we estimate the differences between the two solution to \eqref{nonlocal:FP} with an arbitrary even $W$ in $C^\infty([0,L])$ and $W = \sum_{j=1}^M a_j k_j$ by using \eqref{eq:w-kj} in Corollary \ref{cor:w-kj} and \eqref{ineq:rho1_rho2} in Lemma \ref{lemm:rho1_rho2}. 
Moreover, because we can estimate the solutions to \eqref{eq:system} and \eqref{nonlocal:FP} with $W = \sum_{j=1}^M a_j k_j$ from Theorem \ref{thm:order_est}, we obtain the following main result.
\begin{theorem}\label{thm:RDSA}
For any even 2L-periodic function $W$ in $C^\infty([0,L])$, any time $T>0$ and any small positive constant $\eps$, there exist $M \in \N$, a Keller--Segel system \eqref{eq:system} with $M+1$ component, and a positive constant $C_T$ independent of $\eps$ such that 
\begin{equation*}
	 \left\| \rho^\eps - \rho \right\|_{ C([0,T],L^2 (\Omega))} \le C_T \eps,
\end{equation*}
where $\rho$ is the solution to \eqref{nonlocal:FP} equipped with $\rho_0 \in C^2(\Omega)$ and $\rho^\eps$ is the first component of the solution to \eqref{eq:system} equipped with  \eqref{iv:rho} and \eqref{conc_init_data:v_j}.
\end{theorem}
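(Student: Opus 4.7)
I would prove this by a triangle-inequality argument centred on an intermediate nonlocal Fokker--Planck problem whose potential is exactly a finite linear combination of the fundamental solutions $k_j$. The two main ingredients are already available: Lemma \ref{lemm:rho1_rho2}, which controls the difference between two solutions of \eqref{nonlocal:FP} by the $L^1$-difference of their potentials, and Theorem \ref{thm:order_est}, which controls the singular limit of \eqref{eq:system} towards \eqref{nonlocal:FP} precisely when the potential has the form $\sum_j a_j k_j$.

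Fix $\eps>0$. First, I would invoke Corollary \ref{cor:w-kj} to select $M\in\N$ and coefficients $\{a_j\}_{j=1}^M$ (specified explicitly by \eqref{set:aj} through the Lagrange--Chebyshev formula of Proposition \ref{prop:lag_coef}) such that
\[
\Big\|\,W-\sum_{j=1}^M a_j k_j\,\Big\|_{C(\Omega)}\le C_W\eps.
\]
Set $\tilde W:=\sum_{j=1}^M a_j k_j$ and let $\tilde\rho$ denote the mild solution of \eqref{nonlocal:FP} with potential $\tilde W$ and initial datum $\rho_0$. Since $\Omega$ has finite measure, $\|W-\tilde W\|_{L^1(\Omega)}\le 2L\,\|W-\tilde W\|_{C(\Omega)}$, and Lemma \ref{lemm:rho1_rho2} yields
\[
\|\rho-\tilde\rho\|_{C([0,T],L^2(\Omega))}\le C'\eps.
\]
Secondly, I would apply Theorem \ref{thm:order_est} to the Keller--Segel system \eqref{eq:system} carrying these same $\{d_j,a_j\}$, with initial data \eqref{iv:rho}--\eqref{conc_init_data:v_j}; this yields
\[
\|\rho^\eps-\tilde\rho\|_{C([0,T],H^1(\Omega))}\le C_1\eps,
\]
which embeds into the corresponding $L^2$ bound. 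A triangle inequality then furnishes the claimed estimate with $C_T:=C'+C_1$.

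The main delicacy is to verify that $C_T$ is \emph{genuinely} independent of $\eps$: both $C'$ (via Lemma \ref{lemm:rho1_rho2}) and $C_1$ (via Theorem \ref{thm:order_est}) depend a priori on the coefficients $\{a_j\}_{j=1}^M$, and $M=M(\eps)$ in general grows as $\eps\to 0$. Here I would rely on the explicit Lagrange formula of Proposition \ref{prop:lag_coef} together with the smoothness of $W$: for $W\in C^\infty([0,L])$ the Chebyshev-interpolation coefficients decay super-algebraically and, via \eqref{set:aj}, the sum $\sum_{j=1}^M|a_j|$ remains uniformly bounded as $\eps\to 0$. A careful reading of the proofs of the two input results shows that the potential enters only through quantities such as $\|\tilde W\|_{W^{1,1}}$, which are controlled by $\sum_j|a_j|$ and therefore stay bounded. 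This produces a single constant $C_T$ depending only on $T$, $\rho_0$ and $W$, completing the argument.
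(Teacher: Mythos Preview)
Your triangle-inequality strategy through an intermediate problem with potential $\tilde W=\sum_j a_jk_j$ is exactly the paper's proof: it invokes Corollary~\ref{cor:w-kj} (equivalently Theorem~\ref{thm:HCSE}) to approximate $W$, Lemma~\ref{lemm:rho1_rho2} to compare $\rho$ with $\tilde\rho$, and Theorem~\ref{thm:order_est} (the paper cites the weaker Lemma~\ref{lemma:L2_conv}) to compare $\tilde\rho$ with $\rho^\eps$, then adds the two contributions.

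Where you go further than the paper is in flagging the dependence of $C'$ and $C_1$ on $\{a_j\}_{j=1}^M$ and hence, through $M=M(\eps)$, on $\eps$. This is a genuine subtlety that the paper's proof simply does not address: it sets $C_T:=\max\{2L\tilde C_T,\sqrt{C_{11}}\}$ without discussing why these constants stay bounded as $M\to\infty$. Your proposed fix, however, would need more care. Super-algebraic decay of the Chebyshev coefficients $\alpha_j^{M-1}$ is what $W\in C^\infty$ guarantees, but by \eqref{set:aj} one has $a_j=2\alpha_{j-1}^{M-1}\sinh((j-1)L)/(j-1)$, so an exponentially growing factor is present; moreover, the constants inside Lemmas~\ref{lemma:L2_conv}--\ref{lemma:H1_conv} involve not just $\sum_j|a_j|$ but also $j$-dependent quantities like $C_3,C_8,C_9,C_{10}$ and $1/d_j=(j-1)^2$. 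A uniform bound would require tracking all of these simultaneously, which neither you nor the paper actually carry out. At the level of rigour the paper adopts, your argument is complete and matches it; your extra paragraph correctly identifies an issue that both proofs leave open.
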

This theorem shows that the solution to nonlocal Fokker--Planck equation with any potential can be approximated by that to the multiple components of the Keller--Segel system with specified parameters.
This convergence result shows a relationship between any advective nonlocal interactions and a local dynamics.
\begin{remark}
Even if $W$ is continuous instead of $C^\infty([0,L])$ in Theorem \ref{thm:RDSA},
the statement in Theorem \ref{thm:RDSA} holds.
However, the parameters $\{ a_j\}_{j=1}^M$ can not be determined for given $W$ although the existence of $\{ a_j\}_{j=1}^M$ is guaranteed in Corollary 1 by Ninomiya et al. \cite{NTY2017}.
\end{remark}

\section{Mild solution}\label{sec:3}
To show Theorem \ref{thm:1}, we present some lemmas.
\subsection{Fundamental solution and boundedness}
First we provide the following lemma:
\begin{lemma}
Functions $G$ of \eqref{G} and $G_j^{\eps}$ of \eqref{G_j} are the fundamental solution for $u_t = u_{xx}$ and 
\begin{equation}\label{eq:heat_v}
u_t =   ( d_j u_{xx}  - u)/ \eps    
\end{equation}
with periodic boundary condition of \eqref{bc:peri} type, respectively.
\end{lemma}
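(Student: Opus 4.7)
The plan is to verify the three defining properties of a periodic fundamental solution: (i) $G$ and $G_j^\eps$ satisfy the respective PDEs classically for $t>0$, (ii) they meet the periodic boundary conditions of \eqref{bc:peri} type, and (iii) convolution against them recovers the initial datum as $t\to 0^+$. Because the series defining $G$ and $G_j^\eps$ are already in Fourier form, every step reduces to a direct mode-by-mode computation; the only care required is in justifying termwise differentiation and exchanging the sum with the initial-time limit.

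For (i), each mode $\phi_n(x,t):=e^{-\sigma_n^2 t}e^{i\sigma_n x}$ satisfies $\partial_t\phi_n=-\sigma_n^2\phi_n=\partial_{xx}\phi_n$. The Gaussian decay $e^{-\sigma_n^2 t}$ makes the series, together with all its spatial and temporal derivatives, converge absolutely and uniformly on $\Omega\times[t_0,\infty)$ for every $t_0>0$, so termwise differentiation is legitimate and $G$ solves $u_t=u_{xx}$ on $\Omega\times(0,\infty)$. Replacing the exponential factor by $e^{-(d_j\sigma_n^2+1)t/\eps}$, which decays at least as fast for $\eps,d_j>0$, yields the analogous statement for $G_j^\eps$ with the operator in \eqref{eq:heat_v}. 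For (ii), since $e^{i\sigma_n\cdot 2L}=e^{2\pi i n}=1$, each mode is $2L$-periodic together with its derivative, so the periodic boundary conditions are automatic for both $G$ and $G_j^\eps$.

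For (iii), I would compute the convolution of $G(\cdot,t)$ with a smooth $2L$-periodic test function $\phi$ by Fubini and obtain
\begin{equation*}
(G(\cdot,t)*\phi)(x)=\sum_{n\in\Z}e^{-\sigma_n^2 t}\,\hat\phi(n)\,e^{i\sigma_n x},
\quad
\hat\phi(n):=\frac{1}{2L}\int_\Omega\phi(y)e^{-i\sigma_n y}\,dy,
\end{equation*}
where $\hat\phi(n)$ decays faster than any polynomial. Dominated convergence then sends this expression to the Fourier series $\sum_n\hat\phi(n)e^{i\sigma_n x}=\phi(x)$ as $t\to 0^+$, identifying $G$ as the periodic fundamental solution of the heat operator. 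The identical computation with the decay factor $e^{-(d_j\sigma_n^2+1)t/\eps}$ proves the corresponding statement for $G_j^\eps$ and \eqref{eq:heat_v}.

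There is no serious obstacle: the whole argument is standard Fourier-series manipulation, and the only mildly delicate points, namely termwise differentiation of the defining series and exchanging the sum with the $t\to 0^+$ limit, are both controlled by the superexponential decay of the modal coefficients together with the rapid decay of $\hat\phi(n)$.
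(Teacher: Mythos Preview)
Your proposal is correct and is exactly the approach the paper intends: it states only that ``The proof of this lemma is obtained by substituting and calculating,'' and your termwise Fourier verification of the PDE, periodicity, and initial-data property is precisely such a substitution-and-calculation, carried out in full detail.
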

The proof of this lemma is obtained by substituting and calculating.

Using the fundamental solution $G_j^\eps$, we define $\bar{v}_j^\eps$ as
\begin{equation}\label{vj_bar}
\bar{v}_j^\eps = \Psi_j[\phi], \quad \phi \in E.
\end{equation}
We have the following lemma.
\begin{lemma}
Assume $\phi \in E$. Then \eqref{vj_bar} satisfies 
\begin{equation*}
\left\{
\begin{aligned}
	&\frac{ \partial \bar{v}_j^\eps }{ \partial t } = \frac{1}{\eps} \Big( d_j \bar{v}_{j, xx}^\eps  - \bar{v}_j^\eps +\phi \Big),
\quad x\in \Omega, \ t>0, \\
	&\bar{v}_j^\eps(x, 0) =  (v_{j})_0(x), \quad  x\in \Omega.
\end{aligned} 
\right.
\end{equation*}
\end{lemma}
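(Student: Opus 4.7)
The plan is to verify the claim by direct differentiation of the explicit formula
\[
\bar{v}_j^\eps(x,t) = \big(G_j^\eps(\cdot,t) * (v_{j})_0\big)(x) + \frac{1}{\eps}\int_0^t \big(G_j^\eps(\cdot, t-s) * \phi(\cdot, s)\big)(x)\, ds,
\]
using the preceding lemma, which identifies $G_j^\eps$ as the fundamental solution of \eqref{eq:heat_v} on $\Omega$ subject to periodic boundary conditions.

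First I would verify the initial condition. The Duhamel integral vanishes at $t=0$, and $G_j^\eps(x,0) = (2L)^{-1}\sum_{n\in\Z} e^{i\sigma_n x}$ is the $2L$-periodic Dirac delta, so convolution with it reproduces $(v_{j})_0$. For rigour one writes $\big(G_j^\eps(\cdot,t) * (v_{j})_0\big)(x)$ as the Fourier series of $(v_{j})_0$ multiplied by $e^{-(d_j \sigma_n^2 + 1)t/\eps}$ and lets $t\downarrow 0$, using $(v_{j})_0\in C^2(\Omega)$ to secure uniform convergence to $(v_{j})_0(x)$.

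Next I would compute $\partial_t \bar{v}_j^\eps$ by Leibniz's rule. Differentiating the first summand gives $\big((\partial_t G_j^\eps)(\cdot,t) * (v_{j})_0\big)(x)$, which by the preceding lemma equals $\frac{1}{\eps}(d_j\partial_x^2 - 1)\big(G_j^\eps(\cdot,t) * (v_{j})_0\big)(x)$, since convolution commutes with $\partial_x$. Differentiating the Duhamel integral produces the boundary contribution $\frac{1}{\eps}\big(G_j^\eps(\cdot,0) * \phi(\cdot,t)\big)(x) = \frac{1}{\eps}\phi(x,t)$ plus $\frac{1}{\eps}(d_j\partial_x^2 - 1)$ applied to the integral itself. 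Summing these pieces gives precisely $\partial_t \bar{v}_j^\eps = (d_j \bar{v}_{j,xx}^\eps - \bar{v}_j^\eps + \phi)/\eps$. Since $G_j^\eps$ is $2L$-periodic in $x$, so is $\bar{v}_j^\eps$, and its derivative as well.

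The main technical obstacle is justifying the term-by-term differentiation of the Fourier series defining $G_j^\eps$ together with the exchange of summation, space integration, and the time derivative. For every fixed $t>0$ the Fourier coefficients decay like $e^{-(d_j\sigma_n^2 + 1)t/\eps}$, giving absolute and uniform convergence of arbitrarily many spatial and temporal derivatives, so each exchange is legitimate on $(0,\infty)$. The only delicate point is the behaviour near $s=t$ in the Duhamel term, which is handled by the substitution $s\mapsto t-\sigma$: the boundary contribution becomes $\lim_{\sigma\downarrow 0}\frac{1}{\eps}\big(G_j^\eps(\cdot,\sigma) * \phi(\cdot,t)\big)(x)$, and the approximate identity $G_j^\eps(\cdot,\sigma)$ converges to the periodic delta as $\sigma\downarrow 0$ on the continuous function $\phi(\cdot,t)\in H^1(\Omega)\hookrightarrow C(\Omega)$, yielding $\phi(x,t)/\eps$ as needed.
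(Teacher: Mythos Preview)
Your proposal is correct and follows the same approach as the paper, which simply states that the proof ``is provided by substituting \eqref{vj_bar} into the equation and calculating.'' You have filled in the details of that substitution—verifying the initial condition via the approximate-identity property of $G_j^\eps$ and computing $\partial_t$ by Leibniz's rule using the preceding lemma—exactly as intended.
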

The proof of this lemma is provided by substituting \eqref{vj_bar} into the equation and calculating.
Next we estimate the boundedness of $\bar{v}_j^\eps$ in the following lemma.

\begin{lemma}\label{lem:bound_vj}
Assume that $ \phi \in E$ and \eqref{iv:vj}, and let $C_3$ be a positive constant given by $C_3 : =\sqrt{ L} /(d_j \sqrt{6})$.
Then it holds that
\begin{align}
	& \left\| ( \bar{v}_{j}^\eps)_{x} \right\|_{C([0, \tau], C(\Omega))} 
	\le \left\|  ( v_{j})_{0,x}  \right\|_{ C (\Omega)}
	+ C_3 \left\| \phi \right\|_{ C([0, \tau], L^2 (\Omega) ) } \label{boun:v_j,x}, \\
	& \left\| ( \bar{v}_{j}^\eps)_{xx} \right\|_{C([0, \tau], C(\Omega))} 
	\le \left\|  ( v_{j})_{0,xx}  \right\|_{ C (\Omega)}
	+ C_3  \left\| \phi_x \right\|_{ C([0, \tau],  L^2 (\Omega))  } \label{boun:v_j,xx}.
\end{align}
\end{lemma}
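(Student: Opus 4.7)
The plan is to split $\bar{v}_j^\eps$ via Duhamel as $\bar{v}_j^\eps = G_j^\eps * (v_j)_0 + w$, with
\[
 w(x,t) := \frac{1}{\eps}\int_0^t\int_\Omega G_j^\eps(x-y,t-s)\,\phi(y,s)\,dy\,ds,
\]
and to estimate the two pieces separately.

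For the initial-datum piece, reading off from \eqref{G_j} that $G_j^\eps\ge 0$ and $\int_\Omega G_j^\eps(x,t)\,dx=e^{-t/\eps}\le 1$ (coming from the $n=0$ mode), I would commute the spatial derivatives with the convolution -- $(v_j)_0\in C^2(\Omega)$ is periodic -- to get $\partial_x^k(G_j^\eps*(v_j)_0) = G_j^\eps * \partial_x^k(v_j)_0$ for $k=1,2$. Young's convolution inequality then supplies the $\|\partial_x^k(v_j)_0\|_{C(\Omega)}$ contribution to the right-hand sides of \eqref{boun:v_j,x} and \eqref{boun:v_j,xx}.

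For the Duhamel piece $w$, both $w_x$ and $w_{xx}$ are mean-zero in $x$ at each $t$. The identity $\sum_{n\neq 0}\sigma_n^{-2}=L^2/3$ combined with Cauchy--Schwarz on the Fourier series yields the sharp embedding
\[
\|u\|_{C(\Omega)}\le \sqrt{L/6}\,\|u_x\|_{L^2(\Omega)}
\]
for mean-zero periodic $u\in H^1(\Omega)$, which is the source of the constant $\sqrt{L/6}/d_j = C_3$. Applying it to $u=w_x$ reduces \eqref{boun:v_j,x} to proving $\|w_{xx}(\cdot,t)\|_{L^2}\le \|\phi\|_{C([0,\tau],L^2)}/d_j$; applying it to $u=w_{xx}$, after one integration by parts in the spatial convolution $(G_j^\eps)_x*\phi=G_j^\eps*\phi_x$, reduces \eqref{boun:v_j,xx} to $\|w_{xxx}(\cdot,t)\|_{L^2}\le \|\phi_x\|_{C([0,\tau],L^2)}/d_j$.

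I would establish the $L^2$-in-$x$ bound on $w_{xx}$ in Fourier: from $\hat w_n(t) = \tfrac{1}{\eps}\int_0^t e^{-(d_j\sigma_n^2+1)(t-s)/\eps}\,\hat\phi_n(s)\,ds$, Young's inequality in $s$ gives $|\hat w_n(t)|\le (d_j\sigma_n^2+1)^{-1}\sup_{s\le t}|\hat\phi_n(s)|$, and the elementary estimate $\sigma_n^2/(d_j\sigma_n^2+1)\le 1/d_j$ produces the factor $1/d_j$. The main technical obstacle is the commutation of the time-supremum with the $n$-summation when translating the mode bound into the $L^2$-in-$x$ estimate, since naively $\sum_n\sup_s\ge \sup_s\sum_n$ runs the wrong way. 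I would overcome this by retaining, in place of the pointwise supremum, a weighted time-integral of $|\hat\phi_n(s)|^2$ against the probability density $\lambda_n^\eps e^{-\lambda_n^\eps(t-s)}$ (with $\lambda_n^\eps = (d_j\sigma_n^2+1)/\eps$), so that Parseval in $x$ can be invoked at a fixed $s$ before the time supremum is taken, exploiting the convolutional structure of the Duhamel formula.
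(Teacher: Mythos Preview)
Your Duhamel split and the treatment of the initial-datum term match the paper's proof (the paper phrases it as the maximum principle for the damped heat flow rather than Young, but the content is identical). For the Duhamel piece, however, the paper does \emph{not} pass through $\|w_{xx}\|_{L^2}$ and the Sobolev embedding; it bounds $|w_x(x,t)|$ pointwise by $(2L)^{-1/2}\sum_n|\sigma_n||\hat w_n(t)|$, controls $|\hat w_n(t)|\le (d_j\sigma_n^2+1)^{-1}\sup_s|p_n(s)|$ mode-by-mode, and then applies Cauchy--Schwarz in $n$ using the \emph{square-summable} weights $|\sigma_n|/(d_j\sigma_n^2+1)$, whose $\ell^2$-norm is exactly $L/(\sqrt{3}\,d_j)$ and produces $C_3$.

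Your proposed fix for the sup/sum commutation has a concrete gap. After Jensen against $\lambda_n^\eps e^{-\lambda_n^\eps(t-s)}$ you obtain
\[
\sum_n\sigma_n^4|\hat w_n(t)|^2 \;\le\; \frac{1}{d_j^2}\int_0^t \sum_n \lambda_n^\eps e^{-\lambda_n^\eps(t-s)}\,|\hat\phi_n(s)|^2\,ds,
\]
but the density $\lambda_n^\eps e^{-\lambda_n^\eps(t-s)}$ depends on $n$, so there is no way to ``invoke Parseval at fixed $s$'': one cannot factor out $\sum_n|\hat\phi_n(s)|^2=\|\phi(\cdot,s)\|_{L^2}^2$. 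Replacing the density by its $n$-uniform bound $1/(e(t-s))$ produces a non-integrable singularity at $s=t$, and Minkowski in $\ell^2$ over $s$ runs into the same divergence. The underlying difficulty is that your route asks for control of the Fourier multiplier $\sigma_n^2/(d_j\sigma_n^2+1)$, which is merely bounded, whereas the paper's direct route uses $|\sigma_n|/(d_j\sigma_n^2+1)$, which lies in $\ell^2$. (For the record, the paper's own argument is careless at exactly the step you flagged: after Cauchy--Schwarz in $n$ one has $\sqrt{\sum_n\sup_t|p_n(t)|^2}$, not $\sup_t\sqrt{\sum_n|p_n(t)|^2}$, so neither presentation closes this point cleanly.)
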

\begin{proof}[Proof of Lemma \ref{lem:bound_vj}]
From the triangular inequality, we have
\begin{equation}\label{v_j,x^eps:1and2}
\begin{aligned}
	 \left\| (\bar{v}_{j}^\eps)_{x} \right\|_{C([0, \tau], C(\Omega))}  
	&\le \left\|  G_j^\eps*( v_{j})_{0,x} \right\|_{C([0, \tau], C (\Omega))} \\
	&\quad + \frac{1}{\eps}  \left\|  \int_0^t \int_{\Omega}  (G_{j}^\eps)_x (\cdot-y, \cdot-s) \phi (y,s) dy ds \right\|_{ C([0, \tau], C (\Omega))}.
\end{aligned}
\end{equation}
By the maximum principle for the heat equation \eqref{eq:heat_v},
we compute the first term.
\begin{align*}
\left\|   G_j^\eps*( v_{j})_{0,x}  \right\|_{C([0, \tau], C (\Omega))}
&\le  \left\|  \int_{\Omega}  G_j^\eps (\cdot-y, 0)( v_{j})_{0,x} (y)  dy  \right\|_{ C (\Omega)}\\
&=  \left\|  ( v_{j})_{0,x}  \right\|_{ C (\Omega)}.
\end{align*}
Next, we denote the Fourier coefficient of $\phi$ by 
\begin{equation*}
	p_n(t) := \frac{1}{\sqrt{2L}}\int_\Omega \phi (x,t) e^{-i\sigma_n x} dx.
\end{equation*}
Before estimating the second term of \eqref{v_j,x^eps:1and2}, we can compute that 
\begin{align*}
	&\sum_{n \in \Z}  \Big( \frac{ | \sigma_n| }{d_j\sigma_n^2 + 1}  \Big)^2
	=\sum_{n \ne 0}  \Big( \frac{ | \sigma_n| }{d_j\sigma_n^2 + 1}  \Big)^2
	= 2 \sum_{n = 1}^\infty  \Big( \frac{  \sigma_n }{d_j\sigma_n^2 + 1}  \Big)^2 \\
	&\le 2 \sum_{n = 1}^\infty  \Big( \frac{  1 }{d_j\sigma_n }  \Big)^2 
	 = \frac{2}{d_j^2} \frac{ L^2 }{\pi^2}  \sum_{n = 1}^\infty \frac{1}{n^2}
	 = \frac{ L^2 }{3 d_j^2 }.
\end{align*}
Then we see that 
\begin{align*}
	&\frac{1}{\eps}  \left\|  \int_0^t \int_{\Omega}  (G_{j}^\eps)_x (\cdot-y, \cdot-s) \phi (y,s) dy ds \right\|_{C([0, \tau], C (\Omega))} \\
	&=  \frac{1}{\eps} \sup_{\substack{ t\in[0, \tau], \\ x \in \Omega }} \left| \frac{1}{2L}  \sum_{n \in \Z}  i\sigma_n e^{ - \frac{ d_j\sigma_n^2 + 1 }{ \eps }t  } 
	 			e^{ i\sigma_n x }  \int_0^t e^{  \frac{ d_j\sigma_n^2 + 1 }{ \eps }s  }  \int_\Omega e^{- i \sigma_n y} \phi(y,s)  dy ds \right| \\
	&=  \frac{1}{\eps} \sup_{\substack{ t\in[0, \tau], \\ x \in \Omega }} \left| \frac{1}{\sqrt{2L}}  \sum_{n \in \Z} i\sigma_n  e^{ - \frac{ d_j\sigma_n^2 + 1 }{ \eps }t  } 
	 			e^{ i\sigma_n x }  \int_0^t e^{  \frac{ d_j\sigma_n^2 + 1 }{ \eps }s  } p_n(s) ds \right| \\	
	&\le \frac{1}{\sqrt{2L}}  \sup_{\substack{ t\in[0, \tau], \\ x \in \Omega }}  \left|  \sum_{n \in \Z} 
	 			 i\sigma_n e^{ i\sigma_n x }   \frac{ 1 }{d_j\sigma_n^2 + 1} \Big( 1 - e^{ - \frac{ d_j\sigma_n^2 + 1 }{ \eps }t } \Big) \sup_{s \in [0,t]} | p_n(s) |  \right| \notag\\	
	&\le \frac{1}{\sqrt{2L}} \sup_{ t\in[0, \tau] }   \sum_{n \in \Z} \frac{ | \sigma_n | }{d_j\sigma_n^2 + 1}  \Big( 1 - e^{ - \frac{ d_j\sigma_n^2 + 1 }{ \eps }t } \Big)   | p_n(t) |  
	\le \frac{1}{\sqrt{2L}}   \sum_{n \in \Z}   \frac{ | \sigma_n | }{d_j\sigma_n^2 + 1} \sup_{ t\in[0, \tau] }  | p_n(t) |  \\
	& \le \frac{1}{\sqrt{2L}}  \sqrt{ \sum_{n \in \Z}  \Big( \frac{ | \sigma_n| }{d_j\sigma_n^2 + 1}  \Big)^2 } \sup_{ t\in[0, \tau] }\sqrt{ \sum_{n \in \Z}  | p_n(t) |^2  } \\
	&\le C_3 \left\| \phi \right\|_{ C( [0, \tau], L^2 (\Omega))}.
\end{align*}
Thus, we obtain the first estimation. 
Replacing the functions $(v_{j})_{0,x}$ and $ \phi $ with $(v_{j})_{0,xx}$ and $ \phi_{x} $ in \eqref{v_j,x^eps:1and2}, respectively,
we have also the second assertion of the boundedness by the  same calculation as above.
\end{proof}

Because we will use the following boundedness several times, we give the following lemma:
\begin{lemma}\label{lemm:bounded_G}
Assume   $f=f(x) \in L^2(\Omega)$ and $ g = g(x,t)\in C( (0,T],L^2(\Omega))$ for any $T>0$, and
let $C_4$ be a positive constant given by	$C_4:=\pi/(d_jL)$.
Then we obtain that, for all $t\in (0,T]$,  
\begin{align}
&\left\| G(\cdot, t)*f \right\|_{ L^2 (\Omega)} 
	\le \left\| f \right\|_{ C((0,T], L^2 (\Omega) )}, \label{b:Gg}\\
&\left\|  \int_0^t  \int_\Omega  G( \cdot -y , t-s)  g(y,s)  dy ds \right\|_{ L^2 (\Omega)} 
	\le t \left\| g \right\|_{ C([0,T], L^2 (\Omega)) }, \label{b:Gf}\\
&\left\|  \int_0^t  \int_\Omega  G_x( \cdot -y , t-s)  g(y,s)  dy ds \right\|_{ L^2 (\Omega)} 
	\le \sqrt{t} \left\| g \right\|_{ C([0,T], L^2 (\Omega)) }, \label{b:Gxf}\\
&\frac{1}{\eps} \left\|  \int_0^t  \int_\Omega  G_j^\eps( \cdot -y , t-s)  g(y,s)  dy ds \right\|_{ L^2 (\Omega)}
	 \le  \left\| g \right\|_{ C([0,T], L^2 (\Omega)) }, \label{b:Gjf}\\
&\frac{1}{\eps} \left\|  \int_0^t  \int_\Omega  (G_{j}^\eps)_x( \cdot -y , t-s)  g(y,s)  dy ds \right\|_{ L^2 (\Omega)}
 	\le C_4 \left\| g \right\|_{ C([0,T], L^2 (\Omega)) }. \label{b:Gjxf}
\end{align}
\end{lemma}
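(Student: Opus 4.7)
The plan is to reduce all five estimates to Parseval-type computations on Fourier coefficients. Expanding $f$ and $g(\cdot,s)$ in the orthonormal basis $\{e^{i\sigma_n x}/\sqrt{2L}\}_{n\in\Z}$ with coefficients $\hat f_n$ and $\hat g_n(s)$, spatial convolution with $G$ or $G_j^\eps$ becomes multiplication by an explicit multiplier in each Fourier mode: the $n$-th coefficient of $G(\cdot,t)*f$ is $e^{-\sigma_n^2 t}\hat f_n$, while the $n$-th coefficient of $\frac{1}{\eps}\int_0^t\int_\Omega G_j^\eps(\cdot-y,t-s)g(y,s)\,dy\,ds$ is $\frac{1}{\eps}\int_0^t e^{-\alpha_n(t-s)}\hat g_n(s)\,ds$ with $\alpha_n:=(d_j\sigma_n^2+1)/\eps\ge 1/\eps$. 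Taking an $x$-derivative simply introduces an extra factor $i\sigma_n$ in each mode.

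The first estimate \eqref{b:Gg} is then immediate from $|e^{-\sigma_n^2 t}|\le 1$ and Parseval's identity. The second \eqref{b:Gf} follows by applying Minkowski's inequality in the time variable and then invoking \eqref{b:Gg} pointwise in $s$. For \eqref{b:Gxf}, I apply Cauchy--Schwarz in time to the Fourier-coefficient integral, which yields a factor $(1-e^{-2\sigma_n^2 t})/(2\sigma_n^2)$ for $n\ne 0$ (the $n=0$ mode vanishes because of the $\sigma_n$ coming from $G_x$). Multiplication by $\sigma_n^2$ cancels the denominator, and after summing in $n$ and swapping with the $s$-integral one arrives at a bound of the form $\sqrt{t/2}\,\|g\|_{C([0,T],L^2)}\le \sqrt{t}\,\|g\|_{C([0,T],L^2)}$.

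For \eqref{b:Gjf} and \eqref{b:Gjxf}, the same Cauchy--Schwarz trick applied to the convolution in time gives
\begin{equation*}
\Bigl|\tfrac{1}{\eps}\int_0^t e^{-\alpha_n(t-s)}\hat g_n(s)\,ds\Bigr|^2
\le \frac{1}{\eps(d_j\sigma_n^2+1)}\int_0^t e^{-\alpha_n(t-s)}|\hat g_n(s)|^2\,ds.
\end{equation*}
For \eqref{b:Gjf} one uses $d_j\sigma_n^2+1\ge 1$; summing in $n$, swapping with the time integral, and using $\alpha_n\ge 1/\eps$ gives
\begin{equation*}
\sum_n\int_0^t \tfrac{1}{\eps}e^{-\alpha_n(t-s)}|\hat g_n(s)|^2\,ds
\le \int_0^t \tfrac{1}{\eps}e^{-(t-s)/\eps}\|g(\cdot,s)\|_{L^2}^2\,ds
\le (1-e^{-t/\eps})\|g\|_{C([0,T],L^2)}^2,
\end{equation*}
which yields \eqref{b:Gjf}. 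For \eqref{b:Gjxf}, the extra $\sigma_n^2$ introduced by $(G_j^\eps)_x$ is absorbed by the denominator via $\sigma_n^2/(d_j\sigma_n^2+1)\le 1/d_j$, and the identical manipulation delivers the required $\eps$-independent bound.

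The only mildly delicate points are keeping track of the powers of $\eps$ when combining Cauchy--Schwarz with the exponential integrals in $s$, and justifying the interchange of summation over $n$ and time integration; both follow from the uniform exponential decay $e^{-\alpha_n(t-s)}$ together with $\alpha_n\ge 1/\eps$, which makes every series in question absolutely and uniformly convergent.
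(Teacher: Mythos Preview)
Your argument is correct, but it differs from the paper's in how the time integral is handled. The paper, for each of \eqref{b:Gf}--\eqref{b:Gjxf}, first pulls out $\sup_{s\in[0,t]}|g_n(s)|$ from the time integral in each Fourier mode, computes the remaining exponential integral explicitly (using a first-order Maclaurin remainder for \eqref{b:Gf} and \eqref{b:Gxf}), and then sums over $n$. You instead keep the time integral intact: for \eqref{b:Gf} you use Minkowski in $s$, and for the remaining three you apply Cauchy--Schwarz in $s$ with weight $e^{-\alpha_n(t-s)}$, sum over $n$ via Parseval at fixed $s$, and only then take the supremum over $s$. Your route is slightly more robust, since it avoids the step $\sum_n \sup_s |g_n(s)|^2 \le \sup_s \sum_n |g_n(s)|^2$ that the paper's write-up implicitly uses (and which is false in general without further argument). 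The trade-off is that your constant in \eqref{b:Gjxf} comes out as $1/\sqrt{d_j}$ rather than the stated $C_4=\pi/(d_jL)$; since the lemma is only ever applied with a generic $\eps$-independent constant, this is harmless, but you should flag that your bound does not reproduce the specific value of $C_4$.
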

We note that  the right-hand sides of \eqref{b:Gjf} and \eqref{b:Gjxf} do not depend on $\eps$.
The proof is based on the estimation of the Fourier coefficients of $f$ and $g$.
As this is a straightforward calculation, we present the proof in in the \ref{append:LBG}.

\subsection{Contraction map}
We show that the map $\Phi$ becomes a contraction map from 
\begin{equation*}
	B_R := \{ v \in E \ | \ \left\| v \right\|_{  E } < 2R  \}
\end{equation*}
to $B_R$ taking the sufficiently small time $\tau > 0$. 
\begin{lemma}\label{lem:MR}
Assume that $\phi \in B_R$ and \eqref{iv:vj}. 
Then the following estimate holds
\begin{equation*}
	\sup_{ \left\| \phi \right\|_{  E} <2R }
	 \left\| \frac{ \partial  }{ \partial x  }  ( \phi \sum_{j=1}^M a_j ( \bar{v}_{j}^\eps)_x )  \right\|_{ L^2 (\Omega)} 
	\le M_R,
\end{equation*}
where 
\begin{equation*}
	M_R : = 2R \sum_{j=1}^M | a_j |  \Big( \left\|( v_{j})_{0,x} \right\|_{ C(\Omega)}  + \left\|( v_{j})_{0,xx} \right\|_{ C(\Omega)}   +4C_3R \Big).
\end{equation*}
\end{lemma}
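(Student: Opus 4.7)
The plan is to apply the product rule and reduce everything to pointwise bounds on $(\bar{v}_j^\eps)_x$ and $(\bar{v}_j^\eps)_{xx}$ that Lemma \ref{lem:bound_vj} has already supplied, together with the $H^1$ bound on $\phi$ coming from the hypothesis $\phi \in B_R$.

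First, I would expand
\[
\frac{\partial}{\partial x}\Big(\phi \sum_{j=1}^M a_j (\bar{v}_j^\eps)_x\Big) = \phi_x \sum_{j=1}^M a_j (\bar{v}_j^\eps)_x + \phi \sum_{j=1}^M a_j (\bar{v}_j^\eps)_{xx},
\]
apply the triangle inequality, pull the scalars $|a_j|$ outside, and estimate each summand in $L^2(\Omega)$ by the Hölder-type inequality $\|fg\|_{L^2}\le \|f\|_{L^2}\|g\|_{C(\Omega)}$. This yields
\[
\Big\|\frac{\partial}{\partial x}\Big(\phi \sum_{j=1}^M a_j (\bar{v}_j^\eps)_x\Big)\Big\|_{L^2(\Omega)} \le \sum_{j=1}^M |a_j|\Big(\|\phi_x\|_{L^2(\Omega)}\|(\bar{v}_j^\eps)_x\|_{C(\Omega)} + \|\phi\|_{L^2(\Omega)}\|(\bar{v}_j^\eps)_{xx}\|_{C(\Omega)}\Big).
\]

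Next I would insert the bounds from Lemma \ref{lem:bound_vj}. Since $\phi \in B_R$ means $\|\phi\|_{C([0,\tau],H^1(\Omega))}<2R$, we have in particular $\|\phi\|_{L^2(\Omega)}\le 2R$ and $\|\phi_x\|_{L^2(\Omega)}\le 2R$ pointwise in $t$. Substituting into \eqref{boun:v_j,x}--\eqref{boun:v_j,xx} gives $\|(\bar{v}_j^\eps)_x\|_{C}\le \|(v_j)_{0,x}\|_C + 2C_3 R$ and $\|(\bar{v}_j^\eps)_{xx}\|_{C}\le \|(v_j)_{0,xx}\|_C + 2C_3 R$. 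Plugging these back in, each bracket contributes $2R\bigl(\|(v_j)_{0,x}\|_C + 2C_3R\bigr) + 2R\bigl(\|(v_j)_{0,xx}\|_C + 2C_3R\bigr)$, which after summation over $j$ gives exactly $M_R$ as defined. Taking the supremum over $\phi \in B_R$ on the left-hand side preserves the inequality since the bound is independent of the particular $\phi$.

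There is essentially no genuine obstacle here; the lemma is a routine a priori estimate that packages together what has already been proved. The only care needed is to keep track of which norm is used where: the $L^2$ norms come from the $H^1$ control on $\phi$, while the $C(\Omega)$ norms on $(\bar{v}_j^\eps)_x$ and $(\bar{v}_j^\eps)_{xx}$ are exactly what Lemma \ref{lem:bound_vj} delivers, so no Sobolev embedding of $\phi$ is required. This uniformity in $\eps$ (inherited from Lemma \ref{lem:bound_vj}) is what will make this estimate usable when invoking the contraction argument for $\Phi$ on $B_R$.
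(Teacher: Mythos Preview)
Your proposal is correct and follows essentially the same argument as the paper: expand via the product rule, estimate each term by $\|(\bar v_j^\eps)_x\|_{C(\Omega)}\|\phi_x\|_{L^2}$ and $\|(\bar v_j^\eps)_{xx}\|_{C(\Omega)}\|\phi\|_{L^2}$, then insert the bounds from Lemma~\ref{lem:bound_vj} together with $\|\phi\|_{H^1}<2R$ to arrive at $M_R$. The paper presents this as a single chain of inequalities, but the content is identical.
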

\begin{proof}[Proof of Lemma \ref{lem:MR}]
Using the Minkowski inequality and Lemma \ref{lem:bound_vj}, we see that
\begin{align*}
 	& \left\| \frac{ \partial  }{ \partial x  }  ( \phi \sum_{j=1}^M a_j ( \bar{v}_{j}^\eps)_x )  \right\|_{ L^2 (\Omega)} (t)
 	\le \sum_{j=1}^M | a_j |  \left\| \frac{ \partial  }{ \partial x  }  ( \phi ( \bar{v}_{j}^\eps)_x )  \right\|_{ L^2 (\Omega)}(t) 
	=  \sum_{j=1}^M | a_j |  \left\|  \phi_x ( \bar{v}_{j}^\eps)_x  + \phi ( \bar{v}_{j}^\eps)_{xx}   \right\|_{ L^2 (\Omega)}(t) \\
	& \le \sum_{j=1}^M | a_j | \Big(  \left\| \phi_x ( \bar{v}_{j}^\eps)_x  \right\|_{ L^2 (\Omega)} (t) +  \left\| \phi ( \bar{v}_{j}^\eps)_{xx} \right\|_{ L^2 (\Omega)} (t)  \Big) \\
	& \le \sum_{j=1}^M | a_j | \Big(  \left\| ( \bar{v}_{j}^\eps)_x  \right\|_{  C(\Omega)} (t) \left\| \phi_x \right\|_{ L^2 (\Omega)} (t)  
						     +  \left\|  ( \bar{v}_{j}^\eps)_{xx} \right\|_{  C(\Omega)} (t) \left\| \phi \right\|_{ L^2 (\Omega)}  (t) \Big)\\
	& < \sum_{j=1}^M | a_j | \Big\{  \Big( \left\|  ( v_{j})_{0,x}  \right\|_{ C (\Omega)}
	+ C_3 \left\| \phi \right\|_{ C( [0,\tau ], L^2 (\Omega) ) } \Big)
	 \left\| \phi_x \right\|_{ L^2 (\Omega)}(t)    \\
	& \qquad\quad	\quad   + \Big(  \left\|  ( v_{j})_{0,xx}  \right\|_{ C (\Omega)}
	+ C_3 \left\| \phi_x \right\|_{ C( [0,\tau ], L^2 (\Omega) ) } \Big) \left\| \phi \right\|_{ L^2 (\Omega)}(t)  \Big\}\\
	&\le M_R.
\end{align*}
\end{proof}

Next, we have the following lemma.
\begin{lemma}\label{lem:Phi_MR}
Assume that $\phi \in B_R$ and that \eqref{ic:rho}.
Then
\begin{equation*}
	\left\| \Phi[ \phi ] \right\|_{ H^1 (\Omega)}(t) \le R +tM_R +\sqrt{ t } M_R
\end{equation*}
holds.
\end{lemma}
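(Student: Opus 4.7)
The plan is to split $\Phi[\phi]$ into its free-propagation part $G*\rho_0$ and its Duhamel-type nonlinear piece
\[
\mathcal{I}[\phi](x,t) := -\int_0^t\!\int_\Omega G(x-y,t-s)\,\partial_y F(y,s)\,dy\,ds,\qquad F:=\phi\sum_{j=1}^M a_j (\bar v_j^\eps)_x,
\]
with $\bar v_j^\eps=\Psi_j[\phi]$, and then to bound each piece both in $L^2(\Omega)$ and in $L^2$ of its $x$-derivative, invoking Lemma~\ref{lemm:bounded_G} together with Lemma~\ref{lem:MR}. The $H^1$-norm of $\Phi[\phi]$ will then follow from $\|u\|_{H^1}\le \|u\|_{L^2}+\|u_x\|_{L^2}$ and the triangle inequality.

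First I would handle the free-propagation term: periodicity in $y$ gives $\partial_x(G*\rho_0)=G*(\rho_0)_x$, so applying estimate \eqref{b:Gg} to $\rho_0$ and to $(\rho_0)_x$ and summing squared contributions yields $\|G*\rho_0\|_{H^1(\Omega)}(t) \le \|\rho_0\|_{H^1(\Omega)} < R$ in the standard Sobolev norm.

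Next I would treat $\mathcal{I}[\phi]$. The $L^2$ bound comes directly from \eqref{b:Gf} with $g=\partial_y F$: by Lemma~\ref{lem:MR} one has $\|\partial_y F\|_{C([0,\tau],L^2(\Omega))} \le M_R$, hence $\|\mathcal{I}[\phi]\|_{L^2(\Omega)}(t) \le t\,M_R$. For the $x$-derivative, $\partial_x$ commutes with the $y$-integral and acts only on the first argument of $G$, producing $G_x$; applying the gradient estimate \eqref{b:Gxf} to the very same integrand $g=\partial_y F$ gives $\|\partial_x\mathcal{I}[\phi]\|_{L^2(\Omega)}(t) \le \sqrt{t}\,M_R$. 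Summing the four contributions produces the claim $\|\Phi[\phi]\|_{H^1(\Omega)}(t) \le R + tM_R + \sqrt{t}\,M_R$.

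The only nonroutine step is the use of \eqref{b:Gxf} for the derivative of the Duhamel term. The naive alternative — pulling $\partial_x$ inside and then invoking \eqref{b:Gf} — would either require more regularity of $\bar v_j^\eps$ than Lemma~\ref{lem:bound_vj} provides or yield only a $t$ rate that is too weak for short times; keeping the derivative on the kernel and exploiting the $\sqrt{t}$-smoothing in \eqref{b:Gxf} is essential so that the estimate is sharp enough for the ensuing contraction argument to close on $B_R$.
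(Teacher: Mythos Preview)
Your proposal is correct and follows essentially the same approach as the paper: split $\Phi[\phi]$ into the free part $G*\rho_0$ and the Duhamel term, bound the former in $H^1$ via \eqref{b:Gg} applied to $\rho_0$ and $(\rho_0)_x$, and bound the latter by applying \eqref{b:Gf} to get the $tM_R$ contribution and \eqref{b:Gxf} to get the $\sqrt{t}\,M_R$ contribution, each time invoking Lemma~\ref{lem:MR}. Your remark on why the derivative must be kept on the kernel (so as to exploit the $\sqrt{t}$-smoothing of \eqref{b:Gxf}) is also in line with how the paper uses the estimate.
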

\begin{proof}[Proof of Lemma \ref{lem:Phi_MR}]
From the Minkowski inequality we see that 
\begin{align}
	\left\| \Phi[ \phi ] \right\|_{ H^1 (\Omega)} (t) \notag
    &\le \left\| G * \rho_0 \right\| _{ H^1 (\Omega)}  (t)\\
	& \qquad +\left\|  \int_0^t  \int_\Omega  G( \cdot -y , t-s) \Big( \frac{\partial}{\partial x} \Big( \phi  \frac{ \partial  }{ \partial x } \sum_{j=1}^M a_j \bar{v}_j^\eps  \Big) \Big)(y,s) dy ds  \right\| _{ H^1 (\Omega)} . \label{eq:phi[rho]}
\end{align}
Then using  \eqref{b:Gg} in Lemma \ref{lemm:bounded_G},  we can compute the first term of \eqref{eq:phi[rho]} as follows:
\begin{align*}
	\left\| G * \rho_0 \right\| _{ H^1 (\Omega)} ^2 (t)
	&= \left\| G * \rho_0 \right\|_{  L^2 (\Omega)} ^2 (t)
	+  \left\| G * (\rho_{0})_x \right\|_{  L^2 (\Omega)} ^2 (t) \\
	&= \left\| \rho_0 \right\|_{ L^2 (\Omega)}^2+\left\| (\rho_{0})_x \right\|_{ L^2 (\Omega)}^2 
	=\left\| \rho_0 \right\|_{ H^1 (\Omega)} \\
	&< R^2.
\end{align*}

Next, we estimate the second term of \eqref{eq:phi[rho]}.
Utilizing \eqref{b:Gf} in Lemmas \ref{lemm:bounded_G} and \ref{lem:MR}, we compute that
\begin{align*}
&\left\|  \int_0^t  \int_\Omega  G( \cdot -y , t-s) \Big( \frac{\partial}{\partial x} \Big( \phi  \frac{ \partial  }{ \partial x } \sum_{j=1}^M a_j \bar{v}_j^\eps  \Big) \Big) (y,s)dy ds  \right\| _{ L^2 (\Omega)}^2 \\
& \le t^2  \left\|  \frac{\partial }{ \partial  x } ( \phi \sum_{j=1}^M a_j (\bar{v}_{j}^\eps)_x  )  \right\|_{ C([0,\tau], L^2 (\Omega))}^2 
 \le t^2 M_R^2.
\end{align*}
Similarly, \eqref{b:Gxf} in Lemma \ref{lemm:bounded_G} yields that
\begin{align*}
&\left\|  \int_0^t  \int_\Omega  G_x( \cdot -y , t-s) \Big( \frac{\partial}{\partial x} \Big( \phi  \frac{ \partial  }{ \partial x } \sum_{j=1}^M a_j \bar{v}_j^\eps  \Big) \Big)(y,s) dy ds  \right\| _{ L^2 (\Omega)}^2 \\
& \le t  \left\|  \frac{\partial }{ \partial  x } ( \phi \sum_{j=1}^M a_j ( \bar{v}_{j}^\eps)_x  )  \right\|_{C([0,\tau], L^2 (\Omega))}^2
 \le tM_R^2.
\end{align*}

\end{proof}
Consequently, we observe that $\Phi: E \to E$, and
that there exists $\tau_1 >0$ independent of $\eps$ such that for $\tau < \tau_1$ $\Phi$ is a map from $B_R$ to $B_R$.

Next, we set 
\begin{equation*}
	\widetilde{v}_j^\eps (x, t) : = \Psi_j(\psi)
, \quad \psi \in E.
\end{equation*}

\begin{lemma}\label{lem:dif_v_j}
Assume that $\phi, \psi \in E$.
Then it holds that 
\begin{align}
	&\left\| (\bar{v}_{j}^\eps)_x - (\widetilde{ v }_{j}^\eps)_x  \right\|_{ C([0, \tau], L^2 (\Omega))} 
	\le \left\| \phi_x - \psi_x  \right\|_{ C([0, \tau],L^2 (\Omega) )}, \label{v_tilde:H^1} \\
	& \left\| (\bar{v}_{j}^\eps)_{xx} - (\widetilde{ v}_{j}^\eps)_{xx} \right\|_{ C([0, \tau], L^2 (\Omega))} 
	\le C_4 \left\| \phi_x - \psi_x \right\|_{ C([0, \tau], L^2 (\Omega) ) }, \label{v_tilde:H^2}
\end{align}
where $C_4$ denotes the constant defined in Lemma \ref{lemm:bounded_G}.
\end{lemma}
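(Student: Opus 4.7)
The plan is to exploit the linearity of $\Psi_j$. Subtracting the defining formulas for $\bar{v}_j^\eps = \Psi_j[\phi]$ and $\widetilde{v}_j^\eps = \Psi_j[\psi]$, the initial-data contributions $G_j^\eps \ast (v_j)_0$ cancel, leaving
\[
(\bar{v}_j^\eps - \widetilde{v}_j^\eps)(x,t) = \frac{1}{\eps} \int_0^t \!\!\int_\Omega G_j^\eps(x-y, t-s)\, (\phi - \psi)(y,s)\, dy\, ds.
\]
Thus both inequalities reduce to kernel estimates for $G_j^\eps$, which have already been collected in Lemma \ref{lemm:bounded_G}. Note that the naive route of applying \eqref{b:Gjxf} directly to this identity would yield a bound in terms of $\|\phi-\psi\|_{C([0,\tau],L^2(\Omega))}$ instead of the required $\|\phi_x-\psi_x\|_{C([0,\tau],L^2(\Omega))}$, so an integration by parts is essential.

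For \eqref{v_tilde:H^1}, I would differentiate the identity above once in $x$ and then transfer this derivative onto $\phi-\psi$ by integrating by parts in $y$; the boundary contributions vanish because $G_j^\eps(\cdot, t-s)$ and $(\phi-\psi)(\cdot, s)$ are both $2L$-periodic, so their values at $\pm L$ match. This yields
\[
(\bar{v}_j^\eps - \widetilde{v}_j^\eps)_x(x,t) = \frac{1}{\eps} \int_0^t \!\!\int_\Omega G_j^\eps(x-y, t-s)\, (\phi_y - \psi_y)(y,s)\, dy\, ds.
\]
Estimate \eqref{b:Gjf} of Lemma \ref{lemm:bounded_G}, applied with $g = \phi_x - \psi_x \in C([0,\tau], L^2(\Omega))$, gives the pointwise-in-$t$ bound; taking the supremum over $t \in [0,\tau]$ produces \eqref{v_tilde:H^1}.

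For \eqref{v_tilde:H^2}, I differentiate the preceding representation once more in $x$; this derivative must now fall on the kernel, giving
\[
(\bar{v}_j^\eps - \widetilde{v}_j^\eps)_{xx}(x,t) = \frac{1}{\eps} \int_0^t \!\!\int_\Omega (G_j^\eps)_x(x-y, t-s)\, (\phi_y - \psi_y)(y,s)\, dy\, ds.
\]
Applying \eqref{b:Gjxf} of Lemma \ref{lemm:bounded_G} with the same choice $g = \phi_x - \psi_x$ then produces precisely the factor $C_4$, and taking the supremum in time yields \eqref{v_tilde:H^2}.

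There is no genuine obstacle here: the whole argument is a one-line reduction to the $\eps$-uniform kernel bounds of Lemma \ref{lemm:bounded_G}, with the only subtlety being the integration by parts used to pull one derivative off the kernel. This is justified for $\phi, \psi \in E = C([0,\tau], H^1(\Omega))$ by the periodicity of both factors, so the proof is essentially clerical once the correct pairing of derivatives with the estimates \eqref{b:Gjf} and \eqref{b:Gjxf} is identified.
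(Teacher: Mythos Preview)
Your proposal is correct and follows essentially the same approach as the paper: cancel the initial-data term by linearity of $\Psi_j$, write $(\bar v_j^\eps-\widetilde v_j^\eps)_x$ as $\frac{1}{\eps}\int_0^t\!\int_\Omega G_j^\eps(\cdot-y,t-s)(\phi_x-\psi_x)(y,s)\,dy\,ds$ and apply \eqref{b:Gjf}, then put one more derivative on the kernel and apply \eqref{b:Gjxf}. The paper simply records these identities without spelling out the integration-by-parts step you describe, but the argument is identical.
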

\begin{proof}[Proof of Lemma \ref{lem:dif_v_j}]
Using \eqref{b:Gjf} in Lemma \ref{lemm:bounded_G},
We compute that 
\begin{align*}
	\left\| ( \bar{v}_{j}^\eps)_x - (\widetilde{ v }_{j}^\eps)_x  \right\|_{ L^2 (\Omega)} ^2(t)
	&= \frac{1}{\eps^2} \left\| \int_0^t  \int_\Omega  G_j^\eps( \cdot -y , t-s) ( \phi_x - \psi_x ) (y,s) dy ds \right\|_{ L^2 (\Omega)}^2  \\
	& \le \left\| \phi_x - \psi_x  \right\|_{ C([0, \tau], L^2 (\Omega))}^2.
\end{align*}
Similarly,  \eqref{b:Gjxf} in Lemma \ref{lemm:bounded_G} shows that 
\begin{align*}
	\left\| ( \bar{v}_{j}^\eps)_{xx} - (\widetilde{ v }_{j}^\eps)_{xx}  \right\|_{ L^2 (\Omega)} ^2 (t)
	&= \frac{1}{\eps^2} \left\| \int_0^t  \int_\Omega  (G_{j}^\eps)_x( \cdot -y , t-s) ( \phi_x - \psi_x )  (y,s)dy ds \right\|_{ L^2 (\Omega)}^2 \\
	& \le C_4 \left\| \phi_x - \psi_x  \right\|_{ C([0, \tau], L^2 (\Omega))}^2.
\end{align*}
\end{proof}
Finally, we show that map $\Phi$ becomes a contraction map by taking a sufficiently small $\tau_2>0$ and setting $\tau = \tau_2$.
\begin{lemma}\label{lem:cont_map}
Assume that $\phi, \psi \in E$.
Then there exists a positive constant $C_5$ independent of $\eps$ such that 
\begin{equation*}
	\left\| \Phi[\phi] - \Phi[\psi]  \right\|_{ H^1 (\Omega)} (t)
	\le C_5  \sqrt{t}  \left\| \phi - \psi \right\|_{ C([0,\tau], H^1 (\Omega))}.
\end{equation*}

\end{lemma}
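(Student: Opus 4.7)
\begin{Proof}[Proof plan for Lemma \ref{lem:cont_map}]
The plan is to reduce the claimed $H^1$-difference estimate to the estimates already established in Lemma \ref{lem:bound_vj}, Lemma \ref{lem:dif_v_j} and the kernel bounds in Lemma \ref{lemm:bounded_G}. The first observation is that the free term $G*\rho_0$ appears in both $\Phi[\phi]$ and $\Phi[\psi]$, so it cancels, and we are left with
\begin{equation*}
\Phi[\phi]-\Phi[\psi] = -\int_0^t\!\!\int_\Omega G(x-y,t-s)\,
\frac{\partial}{\partial x}\Big(\phi\sum_{j=1}^M a_j(\bar v_j^\eps)_x - \psi\sum_{j=1}^M a_j(\widetilde v_j^\eps)_x\Big)(y,s)\,dyds,
\end{equation*}
where $\bar v_j^\eps=\Psi_j[\phi]$ and $\widetilde v_j^\eps=\Psi_j[\psi]$.

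Next I would insert the ``add and subtract'' decomposition
\begin{equation*}
\phi(\bar v_j^\eps)_x-\psi(\widetilde v_j^\eps)_x
= (\phi-\psi)(\bar v_j^\eps)_x + \psi\bigl((\bar v_j^\eps)_x-(\widetilde v_j^\eps)_x\bigr),
\end{equation*}
differentiate once in $x$, and get four families of terms: $(\phi_x-\psi_x)(\bar v_j^\eps)_x$, $(\phi-\psi)(\bar v_j^\eps)_{xx}$, $\psi_x((\bar v_j^\eps)_x-(\widetilde v_j^\eps)_x)$ and $\psi((\bar v_j^\eps)_{xx}-(\widetilde v_j^\eps)_{xx})$. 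Each of these I would estimate in $L^2(\Omega)$ by pulling out the largest factor in $C(\Omega)$ (which in one dimension is always available since $H^1(\Omega)\hookrightarrow C(\Omega)$, and since Lemma \ref{lem:bound_vj} gives uniform $C(\Omega)$-bounds for $(\bar v_j^\eps)_x$ and $(\bar v_j^\eps)_{xx}$), and leaving the remaining factor in $L^2(\Omega)$. Applying Lemma \ref{lem:dif_v_j} for the two difference terms, the total $L^2$-norm of $\partial_x(\phi\sum a_j(\bar v_j^\eps)_x - \psi\sum a_j(\widetilde v_j^\eps)_x)$ is bounded by a constant times $\|\phi-\psi\|_{C([0,\tau],H^1(\Omega))}$, with the constant depending on $R$, $\{a_j,d_j\}$, the Sobolev embedding constant, and the initial data $(v_j)_0$, but independent of $\eps$ (this is the point of the $\eps$-free bounds in Lemma \ref{lemm:bounded_G}).

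Once this pointwise-in-$t$ $L^2$ bound is secured, the $H^1$ norm of $\Phi[\phi]-\Phi[\psi]$ splits into an $L^2$-part, estimated by \eqref{b:Gf} producing a factor $t$, and a derivative part, estimated by \eqref{b:Gxf} producing a factor $\sqrt{t}$. Choosing $\tau$ bounded, one absorbs $t\le \sqrt{\tau}\sqrt{t}$ into the constant, and both contributions combine into a single $\sqrt{t}$ factor, yielding the desired inequality with a suitable $C_5$ independent of $\eps$. The main obstacle is not conceptual but organizational: keeping the four difference terms separate and verifying that every occurrence of $\bar v_j^\eps$ or $\widetilde v_j^\eps$ is controlled by a bound that does not blow up as $\eps\to 0$. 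This is precisely what Lemmas \ref{lem:bound_vj} and \ref{lem:dif_v_j} were designed for, so modulo careful bookkeeping the argument is routine.
\end{Proof}
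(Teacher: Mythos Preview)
Your plan is correct and follows essentially the same route as the paper's proof: cancel the free term, apply an add-and-subtract decomposition of the nonlinear factor, control each piece using Lemmas \ref{lem:bound_vj} and \ref{lem:dif_v_j} together with the Sobolev embedding $H^1(\Omega)\hookrightarrow C(\Omega)$, and then use the kernel bounds \eqref{b:Gf}--\eqref{b:Gxf} to extract the $t$ and $\sqrt t$ factors, absorbing $t\le\sqrt\tau\,\sqrt t$ into the constant. The only cosmetic differences are that the paper writes the decomposition as $\phi\bigl((\bar v_j^\eps)_x-(\widetilde v_j^\eps)_x\bigr)+(\phi-\psi)(\widetilde v_j^\eps)_x$ rather than your symmetric variant, and for the $L^2$-part it moves one derivative onto $G$ (via \eqref{b:Gxf}) for the $v$-difference term instead of first expanding the product; both choices lead to the same $\sqrt t$ outcome with an $\eps$-independent constant.
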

\begin{proof}[Proof of Lemma \ref{lem:cont_map}]
Since
\begin{equation}\label{terms:Phi}
	\left\| \Phi[\phi] - \Phi[\psi]  \right\|_{ H^1 (\Omega)} ^2 (t)
	= \left\| \Phi[\phi] - \Phi[\psi]  \right\|_{ L^2 (\Omega)} ^2 (t)
	+ \left\| \frac{ \partial }{ \partial x} ( \Phi[\phi] - \Phi[\psi] ) \right\|_{ L^2 (\Omega)} ^2 (t),
\end{equation}
we estimate each term on right-hand side.
We can compute that  
\begin{align*}
	\left\| \Phi[\phi] - \Phi[\psi]  \right\|_{ L^2 (\Omega)} (t)
	&= \left\| \int_0^t \int_\Omega G( \cdot - y, t-s ) \frac{ \partial }{\partial x } 
			\Big( \phi \sum_{j=1}^M a_j ( \bar{v}_{j}^\eps)_x -  \psi \sum_{j=1}^M a_j (\widetilde{v}_{j}^\eps)_x \Big) (y,s) dy ds \right\|_{ L^2 (\Omega)} \\
	&\le \left\| \int_0^t \int_\Omega G( \cdot - y, t-s ) \frac{ \partial }{\partial x } 
			\Big( \phi \sum_{j=1}^M a_j  ( ( \bar{v}_{j}^\eps)_x -  (\widetilde{v}_{j}^\eps)_x ) \Big) (y,s) dy ds \right\|_{ L^2 (\Omega)} \\
	&\quad + \left\| \int_0^t \int_\Omega G( \cdot - y, t-s ) \frac{ \partial }{\partial x } 
			\Big( ( \phi  -  \psi  )\sum_{j=1}^M a_j (\widetilde{v}_{j}^\eps)_x \Big) (y,s) dy ds \right\|_{ L^2 (\Omega)} \\
	&=: K_1(t) + K_2(t).
\end{align*}
Using \eqref{b:Gxf} in Lemma \ref{lemm:bounded_G}, we estimate that 
\begin{align*}
	K_1(t)&=  \left\| \int_0^t  \int_\Omega  G_x( \cdot -y , t-s)\Big( \phi \sum_{j=1}^M a_j  ( ( \bar{v}_{j}^\eps)_x -  (\widetilde{v}_{j}^\eps)_x )  \Big) (y,s)  dy ds \right\|_{ L^2 (\Omega)}  \\
	&\le  \sqrt{ t} \left\| \phi \sum_{j=1}^M a_j  ( ( \bar{v}_{j}^\eps)_x -  (\widetilde{v}_{j}^\eps)_x )  \right\|_{ C( [0, \tau], L^2 (\Omega))} \\
	& \le \sqrt{ t}  \left\| \phi \right\|_{ C([0,\tau], C (\Omega))} \left\|  \sum_{j=1}^M a_j  ( ( \bar{v}_{j}^\eps)_x -  (\widetilde{v}_{j}^\eps)_x )   \right\|_{ C( [0, \tau], L^2 (\Omega))} \\
	& \le \sqrt{ t} \left\| \phi \right\|_{ C([0,\tau], C (\Omega))} \sum_{j=1}^M |a_j|   \left\|  ( \bar{v}_{j}^\eps)_x -  (\widetilde{v}_{j}^\eps)_x  \right\|_{ C( [0, \tau], L^2 (\Omega))} \\
	& \le \sqrt{ t}  \left\| \phi \right\|_{ C([0,\tau], C (\Omega))} \sum_{j=1}^M |a_j|   \left\|  \phi_x - \psi_x  \right\|_{ C( [0, \tau], L^2 (\Omega))},
\end{align*}
where we used  $\phi \in C(\Omega\times [0,\tau])$ from the Sobolev embedding theorem, the Minkowski inequality, and \eqref{v_tilde:H^1} in Lemma \ref{lem:dif_v_j}.

Similarly to this estimation, \eqref{b:Gf} in Lemma \ref{lemm:bounded_G} yields that
\begin{align*}
K_2(t) &= \left\| \int_0^t G( \cdot - y, t-s ) \frac{ \partial }{\partial x } \Big( ( \phi  -  \psi  )\sum_{j=1}^M a_j (\widetilde{v}_{j}^\eps)_x  \Big) (y,s)  dy ds \right\|_{ L^2 (\Omega)}  \\
& \le t  \left\| \frac{ \partial }{\partial x }  \Big( ( \phi  -  \psi  )\sum_{j=1}^M a_j (\widetilde{v}_{j}^\eps)_x  \Big) \right\|_{ C( [0, \tau], L^2 (\Omega))} \\
& \le  t  \left( \left\|  \Big( ( \phi_x  -  \psi_x  )\sum_{j=1}^M a_j (\widetilde{v}_{j}^\eps)_x  \Big) \right\|_{ C( [0, \tau], L^2 (\Omega))} 
	+ \left\|  \Big( ( \phi  -  \psi  )\sum_{j=1}^M a_j (\widetilde{v}_{j}^\eps)_{xx}  \Big) \right\|_{ C( [0, \tau], L^2 (\Omega))}  \right) \\
&\le t  \Big( \sum_{j=1}^M | a_j | \left\| (\widetilde{v}_{j}^\eps)_x \right\|_{ C([0,\tau], C (\Omega))} \left\| \phi_x  -  \psi_x \right\|_{ C( [0, \tau],  L^2 (\Omega))} 
	+  \sum_{j=1}^M | a_j | \left\| (\widetilde{v}_{j}^\eps)_{xx} \right\|_{ C([0,\tau], C (\Omega))}  \left\| \phi  -  \psi \right\|_{  C( [0, \tau], L^2 (\Omega))}  \Big)\\
&\le t C_6  \left\| \phi - \psi \right\|_{ C( [0, \tau], H^1 (\Omega))},
\end{align*}
where we utilized the Minkowski inequality, the boundedness  \eqref{boun:v_j,x} and \eqref{boun:v_j,xx} in Lemma \ref{lem:bound_vj}, and 
\begin{equation}\label{const:max}
	C_6:= \max\{ \sum_{j=1}^M | a_j | \left\| (\widetilde{v}_{j}^\eps)_x \right\|_{ C([0,\tau],C (\Omega))},  \sum_{j=1}^M | a_j | \left\| (\widetilde{v}_{j}^\eps)_{xx} \right\|_{ C([0,\tau],C (\Omega))} \}.
\end{equation}
We note that $C_6$ does not depend on $\eps$.

Next, we estimate the second term of \eqref{terms:Phi} on right-hand side.
First, we write as
\begin{align*}
 \left\| \frac{ \partial }{ \partial x} ( \Phi[\phi] - \Phi[\psi] ) \right\|_{ L^2 (\Omega)} (t)
& \le \left\|  \int_0^t \int_\Omega G_x( \cdot-y, t-s ) \frac{\partial}{\partial x} \Big(  \phi \sum_{j=1}^M a_j  ( ( \bar{v}_{j}^\eps)_x -  (\widetilde{v}_{j}^\eps)_x ) \Big)(y,s) dy ds \right\|_{ L^2 (\Omega)}  \\
 &+  \left\|  \int_0^t \int_\Omega G_x( \cdot-y, t-s ) \frac{\partial}{\partial x} \Big(   ( \phi  -  \psi  )\sum_{j=1}^M a_j (\widetilde{v}_{j}^\eps)_x \Big)(y,s) dy ds \right\|_{ L^2 (\Omega)}  \\
 &=: \mathcal{K}_1(t) +  \mathcal{K}_2(t).
\end{align*}
Similarly to the previous estimations, using  \eqref{b:Gxf} in Lemma \ref{lemm:bounded_G}, we obtain that 
\begin{align*}
\mathcal{K}_1(t)
    &\le \sqrt{t} \Big(  \left\|  \phi_x \sum_{j=1}^M a_j  ( ( \bar{v}_{j}^\eps)_x -  (\widetilde{v}_{j}^\eps)_x )\right\|_{ C( [0, \tau], L^2 (\Omega))}
	+  \left\|  \phi \sum_{j=1}^M a_j  ( ( \bar{v}_{j}^\eps)_{xx} -  (\widetilde{v}_{j}^\eps)_{xx} )\right\|_{ C( [0, \tau], L^2 (\Omega))}  \Big) \\
	& \le \sqrt{t}  \Big( \sum_{j=1}^M | a_j | \left\| ( \bar{v}_{j}^\eps)_x - (\widetilde{v}_{j}^\eps)_x \right\|_{C( [0, \tau],  C (\Omega))} \left\| \phi_x \right\|_{C( [0, \tau],  L^2 (\Omega))} \\
	& \qquad \qquad  +  \sum_{j=1}^M | a_j | \left\| \phi \right\|_{ C( [0, \tau], C (\Omega))}  \left\| ( \bar{v}_{j}^\eps)_{xx} - (\widetilde{v}_{j}^\eps)_{xx}\right\|_{C( [0, \tau],  L^2 (\Omega)} ) \Big)\\
	& \le  \sqrt{t}  \sum_{j=1}^M | a_j |  \Big( C_3 \left\| \phi_x \right\|_{ C( [0, \tau], L^2 (\Omega))}  \left\| \phi - \psi \right\|_{ C( [0, \tau], L^2 (\Omega))}  
		+C_4 \left\| \phi \right\|_{C( [0, \tau],  C (\Omega))} \left\| \phi_x - \psi_x \right\|_{ C( [0, \tau], L^2 (\Omega))}\Big)\\
	&\le \sqrt{t} C_7 \left\| \phi - \psi  \right\|_{C( [0, \tau],  H^1 (\Omega))},
\end{align*}
where we used the boundedness \eqref{boun:v_j,x} in Lemma \ref{lem:bound_vj}, the boundedness that $\phi \in C(\Omega)$,  \eqref{v_tilde:H^1} and \eqref{v_tilde:H^2} in Lemma \ref{lem:dif_v_j}, and we put
\begin{equation*}
	C_7 := \max\{ \left\| \phi_x \right\|_{C([0,\tau], L^2 (\Omega))} \Big(  \sum_{j=1}^M | a_j |C_3 \Big), \  \left\| \phi\right\|_{ C([0,\tau], L^2 (\Omega)) }  \Big( \sum_{j=1}^M | a_j | C_4 \Big) \}.
\end{equation*}
It should also be noted that $C_7$ does not depend on $\eps$.

Finally,  using  \eqref{b:Gxf} in Lemma \ref{lemm:bounded_G}, we obtain that 
\begin{align*}
	\mathcal{K}_2(t)
     &\le \sqrt{t} \Big( \left\| ( \phi_x  -  \psi_x  )\sum_{j=1}^M a_j (\widetilde{v}_{j}^\eps)_x  \right\|_{ C( [0, \tau], L^2 (\Omega))} 
		+ \left\| ( \phi  -  \psi  )\sum_{j=1}^M a_j (\widetilde{v}_{j}^\eps)_{xx}  \right\|_{ C( [0, \tau], L^2 (\Omega))}  \Big)\\
		&\le \sqrt{t} \sum_{j=1}^M |a_j| \Big(  \left\| (\widetilde{v}_{j}^\eps)_x  \right\|_{ C( [0, \tau],  C (\Omega))}   \left\| \phi_x  -  \psi_x  \right\|_{ C( [0, \tau], L^2 (\Omega))} 
			+ \left\| (\widetilde{v}_{j}^\eps)_{xx}  \right\|_{C( [0, \tau],  C (\Omega))}   \left\| \phi  -  \psi  \right\|_{ C( [0, \tau], L^2 (\Omega))} \Big)\\
		&\le \sqrt{t}  C_6  \left\| \phi - \psi \right\|_{C( [0, \tau],  H^1 (\Omega))},
\end{align*}
where $C_6$ is as defined in \eqref{const:max}.
Putting \[
C_5:=\left\| \phi \right\|_{ C([0,\tau], C (\Omega))} \sum_{j=1}^M |a_j| +C_6\sqrt{\tau}+C_6+C_7, \]
we complete the proof.
\end{proof}

Consequently, taking a sufficiently small value $\tau=\tau_2>0$ which is independent of $\eps$, we obtain 
\begin{equation*}
	\left\| \Phi[\phi] - \Phi[\psi] \right\|_{ C([0, \tau_2],H^1 (\Omega))} \le \frac 1 2 \left\| \phi - \psi \right\|_{ C([0,\tau_2], H^1 (\Omega))}.
\end{equation*}
Thus, the map $\Phi: B_R \to B_R$ is a contraction map.

\begin{proof}[Proof of Theorem \ref{thm:1}]
By setting $\tau_0 := \min\{ \tau_1, \tau_2\}$, we see that the map $\Phi: B_R \to B_R$ is a contraction map.
From the Banach fixed-point theorem, the equation $\rho^\eps = \Phi[\rho^\eps]$ has a unique solution in $C([0,\tau_0], H^1(\Omega))$.

\end{proof}

\begin{proof}[Proof of Corollary \ref{cor:mildsol}]
Repeating to use Theorem \ref{thm:1}, we can connect the mild solutions for $t \in [0,T]$ at any time $T>0$.
 Using the term by term of the weak derivative for the integral equations with respect to $x$ and $t$, we observe that the solutions $\rho^\eps$ and $v_j^\eps$ satisfy \eqref{eq:system} in $L^2(0,T,L^2(\Omega))$ and $L^2(0,T,C(\Omega))$, respectively.

Differentiating the right-hand sides of $\rho^\eps = \Phi[\rho^\eps]$ and $v_j^\eps = \Psi_j[v_j^\eps]$ with respect to $x$ by applying the term by term of the weak derivative, respectively, 
we can show that $\rho^\eps \in L^2( 0, T, H^2(\Omega) )$ and $v_j^\eps \in L^2( 0, T, H^3(\Omega) )$.
Similarly, differentiating the right-hand side of $\rho^\eps = \Phi[\rho^\eps]$ with respect to $t$ by applying the term by term of the weak derivative,
we see that $\rho^\eps \in H^1(0,T, L^2(\Omega))$.
\end{proof}

\section{Singular limit analysis}\label{sec:4}
To demonstrate Theorem \ref{thm:order_est} we prepare the lemmas in the following subsections.
\subsection{Fundamental solution} 
First we have the following lemma.
\begin{lemma}\label{lemm:k_j}
$k_j$ in \eqref{fund_sol:v_j} is the fundamental solution to 
\begin{equation*}
\left\{
\begin{aligned}
	&-d_jv_{xx} + v = \delta(x),\\
	&v(-L) = v(L),\\
	&v_{x}(-L) = v_{x}(L),
\end{aligned}
\right.
\end{equation*}
where $\delta$ denotes the Dirac delta function.
\end{lemma}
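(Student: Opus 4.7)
The plan is to verify directly that $k_j$ satisfies the two periodic boundary conditions and the distributional equation $-d_j v_{xx} + v = \delta$ on $\Omega$. Since $k_j$ is even and piecewise smooth (smooth away from $x=0$), only three things need to be checked: (i) the classical ODE $-d_j v_{xx} + v = 0$ on $(-L,0) \cup (0,L)$, (ii) the periodic boundary conditions at $x=\pm L$, and (iii) the jump of the derivative at $x=0$ that produces the $\delta$-mass.

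First I would compute the classical derivatives on $x>0$, where $k_j(x) = C \cosh((L-x)/\sqrt{d_j})$ with $C = (2\sqrt{d_j}\sinh(L/\sqrt{d_j}))^{-1}$. A direct differentiation gives $(k_j)_{xx} = k_j/d_j$ pointwise for $x\neq 0$, so $-d_j(k_j)_{xx}+k_j = 0$ off the origin; by the evenness of $k_j$ the same holds for $x<0$. Next, at the endpoints $x=\pm L$ one has $L-|x|=0$, so $k_j(\pm L) = C\cosh(0)$, which are equal, and the one-sided classical derivative vanishes because it is proportional to $\sinh(0)$; thus $(k_j)_x(-L)=(k_j)_x(L)=0$ and both periodic conditions hold.

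The main (only non-routine) step is computing the jump at the origin. The one-sided derivatives are
\begin{equation*}
(k_j)_x(0^+) = -\frac{1}{2d_j\sinh(L/\sqrt{d_j})}\sinh(L/\sqrt{d_j}) = -\frac{1}{2d_j},
\qquad (k_j)_x(0^-) = +\frac{1}{2d_j},
\end{equation*}
so $[(k_j)_x]_{x=0} := (k_j)_x(0^+) - (k_j)_x(0^-) = -1/d_j$. In the distributional sense this yields $(k_j)_{xx} = \{(k_j)_{xx}\}_{\mathrm{cl}} + [(k_j)_x]_{x=0}\,\delta = k_j/d_j - \delta/d_j$, where $\{\cdot\}_{\mathrm{cl}}$ denotes the classical pointwise second derivative. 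Multiplying by $-d_j$ and adding $k_j$ gives $-d_j(k_j)_{xx}+k_j = \delta$ on $\Omega$, as required.

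I do not expect any real obstacle: the hardest piece is bookkeeping the constant $C$ so that the magnitude of the jump matches $1/d_j$ exactly, which is precisely why the normalization $(2\sqrt{d_j}\sinh(L/\sqrt{d_j}))^{-1}$ appears in the definition \eqref{fund_sol:v_j}. Uniqueness of the fundamental solution (if desired) then follows because any two solutions differ by a periodic solution of the homogeneous equation $-d_j w_{xx}+w=0$, whose only $2L$-periodic solution is $w\equiv 0$.
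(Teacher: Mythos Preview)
Your proposal is correct and follows precisely the approach indicated in the paper, which simply states that the proof is obtained by substituting $k_j$ and calculating. Your explicit computation of the jump $[(k_j)_x]_{x=0}=-1/d_j$ and the verification of the periodic boundary conditions are exactly the calculations the paper leaves to the reader.
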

The proof of this lemma is provided by the substitution of $k_j$ and calculations.

Setting a constant in $k_j$ as 
\begin{equation*}
	c_k(j) : = \frac{1}{2\sqrt{d_j} \sinh ( L / \sqrt{d_j}) }, \quad j=1,\ldots, M,
\end{equation*}
we have the following lemma.
\begin{lemma} \label{lemm:wd_kj}
	For $d_j>0$ and $j=1,\ldots,M$,
	\begin{align*}
		& (k_j)_x(x) = 
		\left\{
		\begin{aligned}
		&\frac{ -c_k(j) }{\sqrt{d_j}} \sinh \frac{ L-x }{\sqrt{d_j}},\quad x \in (0,L],\\
		&\frac{ c_k(j) }{\sqrt{d_j}} \sinh \frac{ L+x }{\sqrt{d_j}},\quad x \in [-L,0),
		\end{aligned}\right.\\
		& (k_j)_{xx}(x) 
  = \frac{1}{d_j} k_j(x), \quad x \in \Omega,\\
		& (k_j)_{xxx}(x) =  \frac{1}{d_j} (k_j)_x(x), 
	\end{align*}
	hold in the weak sense.
\end{lemma}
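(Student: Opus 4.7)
The plan is a direct calculation, since $k_j$ is explicit and piecewise smooth. First I would record the piecewise representation
\[
k_j(x)=
\begin{cases}
c_k(j)\cosh\dfrac{L-x}{\sqrt{d_j}}, & x\in(0,L],\\[4pt]
c_k(j)\cosh\dfrac{L+x}{\sqrt{d_j}}, & x\in[-L,0),
\end{cases}
\]
which follows from $\cosh((L-|x|)/\sqrt{d_j})$ by resolving $|x|$. Applying $\frac{d}{dx}\cosh(\alpha x)=\alpha\sinh(\alpha x)$ to each branch, using the inner derivatives $\mp 1/\sqrt{d_j}$, yields the formulas for $(k_j)_x$ claimed in the lemma. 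Iterating and using $\frac{d}{dx}\sinh(\alpha x)=\alpha\cosh(\alpha x)$ gives $(k_j)_{xx}(x)=k_j(x)/d_j$ on $(-L,0)\cup(0,L)$, and differentiating once more gives $(k_j)_{xxx}=(k_j)_x/d_j$. So the three identities hold classically off the single point $x=0$.

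Next I would justify the weak-sense statement. Since $k_j$ is continuous, even, $2L$-periodic, and piecewise $C^\infty$, it lies in $W^{1,1}_{\mathrm{per}}(\Omega)$, and its weak derivative coincides with the classical derivative almost everywhere; integration by parts against any $2L$-periodic test function produces no boundary contributions at $\pm L$ by periodicity and none at $0$ because $k_j$ is continuous. This identifies $(k_j)_x$ weakly with the piecewise formula above. The function $(k_j)_x$ itself has a jump of size $-1/d_j$ at $x=0$, so its distributional derivative equals $k_j/d_j$ plus $(-1/d_j)\delta_0$; this matches Lemma \ref{lemm:k_j}, which asserts $-d_j(k_j)_{xx}+k_j=\delta$. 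Away from $x=0$ the identities $(k_j)_{xx}=k_j/d_j$ and $(k_j)_{xxx}=(k_j)_x/d_j$ therefore hold in the a.e.\ sense, which is the content of "in the weak sense" here.

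There is essentially no obstacle; this is a verification lemma that collects formulas that will be reused in the singular limit analysis in Section \ref{sec:4}. The only point worth flagging in the write-up is the role of the singularity at $x=0$: the first identity is a genuine pointwise identity on each half-interval, the second is an equality in $L^1(\Omega)$ that differs from the full distributional second derivative by a Dirac mass (consistent with Lemma \ref{lemm:k_j}), and the third follows by differentiating the piecewise formula for $(k_j)_x$ on each half-interval. I would therefore present the proof as "substitute and differentiate, then invoke the piecewise $C^\infty$ structure to pass to the weak formulation," and leave the two-line computations to the reader.
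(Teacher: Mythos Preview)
Your proposal is correct and matches the paper's approach: the paper simply says one obtains the weak derivatives directly by multiplying $k_j$ by a test function $\varphi\in C_0^\infty(\Omega)$ and integrating by parts. Your write-up is in fact more careful than the paper's sketch, since you explicitly flag the Dirac mass at $x=0$ in the full distributional second derivative and reconcile it with Lemma~\ref{lemm:k_j}.
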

For the proof of the Lemma \ref{lemm:wd_kj},
we can directly obtain the weak derivatives
by multiplying $k_j$ by the test function $\varphi \in C_0^\infty(\Omega)$, and the integration by parts.

\begin{lemma}\label{lemm:k_j_L^1}
Let $C_8$ be a positive constant given by
\begin{equation*}
	 C_8 := 2c_k(j) \Big( \cosh \frac{L}{ \sqrt{ d_j} } - 1 \Big).
\end{equation*}
Then it holds that
\begin{align*}
&\left\|  k_j \right\|_{ L^1 (\Omega)} = 1,\\
&\left\|  (k_j)_x \right\|_{ L^1 (\Omega)} = C_8,\\
&\left\|  (k_j)_{xx} \right\|_{ L^1 (\Omega)} = \frac{1}{d_j} \left\|  k_j \right\|_{ L^1 (\Omega)} = \frac{1}{d_j},\\
&\left\| (k_j)_{xxx} \right\|_{ L^1 (\Omega)} = \frac{1}{d_j} \left\| (k_j)_x \right\|_{ L^1 (\Omega)} = \frac{C_8}{d_j},
\end{align*}
and
\begin{equation}\label{kj:C}
\left\|  (k_j)_x \right\|_{ C (\Omega)} = \frac{1}{2 d_j}.	
\end{equation}
\end{lemma}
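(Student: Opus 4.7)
The statements are all explicit norm evaluations, so the plan is to reduce everything to elementary integrals of $\cosh$ and $\sinh$, leveraging the evenness of $k_j$, the oddness of $(k_j)_x$, and the a.e.\ identifications supplied by Lemma \ref{lemm:wd_kj}. First I would treat $\|k_j\|_{L^1(\Omega)}$: since $k_j\ge 0$ and is even, I compute
\[
\|k_j\|_{L^1(\Omega)}=2c_k(j)\int_0^L\cosh\frac{L-x}{\sqrt{d_j}}\,dx
= 2c_k(j)\sqrt{d_j}\,\sinh\frac{L}{\sqrt{d_j}},
\]
and this equals $1$ by the very definition of $c_k(j)$.

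Next I would establish $\|(k_j)_x\|_{L^1(\Omega)}=C_8$. From the weak-derivative formula in Lemma \ref{lemm:wd_kj}, $(k_j)_x$ is odd and satisfies $(k_j)_x(x)=-(c_k(j)/\sqrt{d_j})\sinh((L-x)/\sqrt{d_j})\le 0$ on $(0,L]$, so
\[
\|(k_j)_x\|_{L^1(\Omega)}
=2\int_0^L\frac{c_k(j)}{\sqrt{d_j}}\sinh\frac{L-x}{\sqrt{d_j}}\,dx
=2c_k(j)\!\left(\cosh\frac{L}{\sqrt{d_j}}-1\right)=C_8.
\]
The identities for $\|(k_j)_{xx}\|_{L^1(\Omega)}$ and $\|(k_j)_{xxx}\|_{L^1(\Omega)}$ then follow immediately, because Lemma \ref{lemm:wd_kj} gives the pointwise a.e.\ relations $(k_j)_{xx}=k_j/d_j$ and $(k_j)_{xxx}=(k_j)_x/d_j$ on $\Omega$, so taking $L^1$ norms produces the factor $1/d_j$ multiplying the two norms already computed.

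Finally, for the $C$-norm estimate I would inspect the piecewise expression for $(k_j)_x$: on $(0,L]$ the function $\sinh((L-x)/\sqrt{d_j})$ is monotone decreasing and on $[-L,0)$ the function $\sinh((L+x)/\sqrt{d_j})$ is monotone increasing, so $|(k_j)_x|$ attains its supremum as $x\to 0^\pm$. Evaluating the limit yields
\[
\sup_{x\in\Omega\setminus\{0\}}|(k_j)_x(x)|
=\frac{c_k(j)}{\sqrt{d_j}}\sinh\frac{L}{\sqrt{d_j}}=\frac{1}{2d_j},
\]
which gives \eqref{kj:C}. There is no real obstacle here; the only mild subtlety is that $(k_j)_x$ has a jump at $x=0$ and $(k_j)_{xx}$ differs from $k_j/d_j$ by a delta distribution supported at $0$ (consistent with Lemma \ref{lemm:k_j}), but since all norms above are interpreted as $L^1$ or supremum norms of the a.e.-defined function, this measure-zero exceptional set is harmless and the identifications from Lemma \ref{lemm:wd_kj} may be applied directly.
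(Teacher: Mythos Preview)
Your proof is correct and follows essentially the same approach as the paper: both compute the integrals of $\cosh$ and $\sinh$ directly using the evenness of $k_j$, invoke the pointwise relations from Lemma~\ref{lemm:wd_kj} for the higher derivatives, and evaluate the supremum of $|(k_j)_x|$ at $x\to 0$. Your additional remark on the jump and the delta contribution at $x=0$ is a careful observation that the paper leaves implicit.
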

As the proof is elementary, we put it in  \ref{append:kjL1}.

\subsection{Boundedness of auxiliary factors}
Next, we estimate the boundedness of the solutions $(\rho^\eps,\{v_j^\eps\}_{j=1}^M)$ to \eqref{eq:system} in this subsection.
First, we obtain the following lemma.
\begin{lemma}\label{lemm:k_j*rho_t}
Let $\rho^\eps$ be the solution to the first equation of \eqref{eq:system}. 
There exist positive constants $C_9$ and $C_{10}$ depending on $j$ but independent of $\eps$ such that
\begin{align*}
&\left\| k_j * \rho_t^\eps \right\|_{ C([0,T],L^2 (\Omega))} 
\le C_9,\\
&\left\| (k_j)_x * \rho_t^\eps \right\|_{ C([0,T],L^2 (\Omega))}
\le C_{10}, \\
&\left\| (k_j)_{xx} * \rho_t^\eps \right\|_{C([0,T], L^2 (\Omega))}
\le \frac{C_9}{d_j}.
\end{align*}
\end{lemma}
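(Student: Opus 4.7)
The strategy is to convert $k_j * \rho_t^\eps$ into an explicit expression involving only $\rho^\eps$ and $v_l^\eps$ by substituting the first equation of \eqref{eq:system} and then moving all spatial derivatives onto $k_j$ via the fundamental-solution identity of Lemma \ref{lemm:k_j}. The three bounds will then follow from Young's convolution inequality together with the $\eps$-uniform estimates already available for $\rho^\eps$ (Corollary \ref{cor:mildsol}) and for $v_l^\eps$ (Lemma \ref{lem:bound_vj}), thus bypassing any need for $\eps$-uniform control of $\rho_t^\eps$ itself.

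Substituting the PDE gives $k_j * \rho_t^\eps = k_j * \rho^\eps_{xx} - k_j * (\rho^\eps(\sum_l a_l v_l^\eps)_x)_x$. Setting $w_j^\eps := k_j*\rho^\eps$, Lemma \ref{lemm:k_j} implies $-d_j (w_j^\eps)_{xx} + w_j^\eps = \rho^\eps$, whence
\[
k_j * \rho^\eps_{xx} \;=\; (w_j^\eps)_{xx} \;=\; \tfrac{1}{d_j}(k_j*\rho^\eps - \rho^\eps).
\]
A periodic integration by parts (which is clean here because $k_j$ itself is continuous) yields $k_j * F_x = (k_j)_x * F$, and combining the two pieces produces the key identity
\[
k_j * \rho_t^\eps \;=\; \tfrac{1}{d_j}(k_j * \rho^\eps - \rho^\eps) \;-\; (k_j)_x * \Big(\rho^\eps \textstyle\sum_{l=1}^M a_l (v_l^\eps)_x\Big).
\]
Taking the $L^2$ norm and applying Young's convolution inequality with $\|k_j\|_{L^1(\Omega)}=1$ and $\|(k_j)_x\|_{L^1(\Omega)}=C_8$ from Lemma \ref{lemm:k_j_L^1} yields
\[
\|k_j*\rho_t^\eps\|_{L^2} \;\le\; \tfrac{2}{d_j}\|\rho^\eps\|_{L^2} + C_8 \|\rho^\eps\|_{C(\Omega)} \sum_{l=1}^M |a_l|\,\|(v_l^\eps)_x\|_{L^2},
\]
and the right-hand side is uniformly bounded in $(\eps,t)$ by combining Corollary \ref{cor:mildsol}, the Sobolev embedding $H^1(\Omega)\hookrightarrow C(\Omega)$, and Lemma \ref{lem:bound_vj} applied with $\phi=\rho^\eps$. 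Reading off the constant supplies $C_9$.

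For the second estimate I would run the analogous argument for $(k_j)_x*\rho_t^\eps$; the extra derivative on the kernel requires the identity $(k_j)_x * F_x = \tfrac{1}{d_j}(k_j*F - F)$, obtained by splitting the integration by parts at $y=x$ and picking up the correction $-F(x)/d_j$ coming from the jump $(k_j)_x(0^+)-(k_j)_x(0^-)=-1/d_j$. This yields an explicit formula for $(k_j)_x*\rho_t^\eps$ whose $L^2$ norm is again controlled by the same family of $\eps$-uniform quantities, furnishing $C_{10}$. The third bound is immediate from Lemma \ref{lemm:wd_kj}: the pointwise identity $(k_j)_{xx}(x)=k_j(x)/d_j$ gives $(k_j)_{xx}*\rho_t^\eps = \tfrac{1}{d_j}(k_j*\rho_t^\eps)$, so the bound $C_9/d_j$ follows directly from the first estimate. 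The main obstacle is the bookkeeping around the jump of $(k_j)_x$ at the origin when carrying out the integration by parts for the second estimate — this jump produces the delta-like correction that must appear in the identity for $(k_j)_x*F_x$ — but once that bookkeeping is in place, every estimate reduces to Young's convolution inequality together with the previously established uniform bounds.
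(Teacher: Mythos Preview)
Your proposal is correct and follows the same overall strategy as the paper: substitute the first equation of \eqref{eq:system} for $\rho^\eps_t$, shift spatial derivatives onto the kernel, and close with Young's inequality together with the $\eps$-uniform bounds from Corollary~\ref{cor:mildsol} and Lemma~\ref{lem:bound_vj}. The differences are in the bookkeeping rather than in the idea.

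For the diffusion piece the paper simply writes $k_j*\rho^\eps_{xx}=(k_j)_x*\rho^\eps_x$ and bounds it by $\|(k_j)_x\|_{L^1}\|\rho^\eps_x\|_{L^2}$, then iterates this for the higher estimates using the explicit $L^1$ norms of $(k_j)_x,(k_j)_{xx},(k_j)_{xxx}$ from Lemma~\ref{lemm:k_j_L^1}. You instead invoke the elliptic identity $k_j*\rho^\eps_{xx}=(k_j*\rho^\eps-\rho^\eps)/d_j$ for the first estimate and the jump identity $(k_j)_x*F_x=(k_j*F-F)/d_j$ for the second; this is equivalent but arguably cleaner because it sidesteps the issue of distinguishing the pointwise second derivative of $k_j$ from its distributional one (the paper's Lemma~\ref{lemm:wd_kj} records the former, and some care is needed when one writes $(k_j)_x*\rho^\eps_{xx}=(k_j)_{xx}*\rho^\eps_x$). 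Your treatment of the third estimate is also a genuine shortcut: the paper reruns the full computation with $(k_j)_{xxx}$, whereas you read it off immediately from $(k_j)_{xx}=k_j/d_j$ and the first bound. Both routes arrive at the same constants up to harmless factors.
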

Although we should explicitly write the dependence of $t$ on the norm of the spatial direction for functions depending on the position $x$ and time $t$, for example $\|\cdot\|_{L^2(\Omega)}(t)$, we abbreviate the symbol of $(t)$ for the simple descriptions from here.
\begin{proof}[Proof of Lemma \ref{lemm:k_j*rho_t}]
From the first equation in \eqref{eq:system} and the final assertion of Corollary \ref{cor:mildsol}, we have 
\begin{align*}
k_j * \rho^\eps_t 
&= k_j * \rho^{\eps}_{xx} +  k_j * \frac{ \partial  }{ \partial x } \Big(  \rho^{\eps}  \frac{ \partial  }{ \partial x } \sum_{j=1}^M a_j v_j^\eps \Big)\\
&= (k_j)_x * \rho^{\eps}_{x} +  (k_j)_x * \Big(  \rho^{\eps} \sum_{j=1}^M a_j (v_{j}^\eps)_x \Big).
\end{align*}
We can define $k_j * \rho^\eps_t $ based on the final assertion of Corollary \ref{cor:mildsol}.
Then we compute that 
\begin{align*}
\left\| k_j * \rho_t^\eps \right\|_{ L^2 (\Omega)} 
&\le \left\|  (k_j)_x * \rho^{\eps}_{x}  \right\|_{ L^2 (\Omega)} + \left\| (k_j)_x * \Big(  \rho^{\eps}  \sum_{j=1}^M a_j (v_{j}^\eps)_x \Big) \right\|_{ L^2 (\Omega)} \\
& \le \left\|  (k_j)_x  \right\|_{ L^1 (\Omega)} \left\| \rho^{\eps}_{x} \right\|_{ L^2 (\Omega)} + \left\| (k_j)_x \right\|_{ L^1 (\Omega)} \left\|  \rho^{\eps}  \sum_{j=1}^M a_j (v_{j}^\eps)_x \right\|_{ L^2 (\Omega)} \\
&\le C_8 \left\| \rho^{\eps}_{x} \right\|_{ L^2 (\Omega)} + \sqrt{2L} C_8 \left\| \rho^{\eps } \right\|_{ C (\Omega)}  \sum_{j=1}^M |a_j| \left\| (v_{j}^\eps)_x \right\|_{ C (\Omega)} \\
&\le \sup_{t\in [0,T]} \Big\{ C_8 \left\| \rho^{\eps}_{x} \right\|_{ L^2 (\Omega)} + \sqrt{2L} C_8 \left\| \rho^{\eps } \right\|_{ C (\Omega)}  \sum_{j=1}^M |a_j|  
 \Big( \left\|  ( v_{j})_{0,x}  \right\|_{ C (\Omega)}	+ C_3 \left\| \rho^\eps \right\|_{  L^2 (\Omega)  }  \Big)\Big\} \\
&=: C_9, 
\end{align*}
where we used the Minkowski inequality, the Young inequality, \eqref{boun:v_j,x} in Lemma \ref{lem:bound_vj},  Lemma \ref{lemm:k_j_L^1}, and  $\rho^\eps \in C(\Omega) $ from the Sobolev's embedding theorem owing to $\rho^\eps \in C([0,T], H^1(\Omega) )$.
Similarly, we can obtain 
\begin{align*}
(k_j)_x * \rho^\eps_t 
= (k_j)_{xx} * \rho^{\eps}_{x} +  (k_j)_{xx} * \Big(  \rho^{\eps} \sum_{j=1}^M a_j (v_{j}^\eps)_x \Big)
\end{align*}
from the first equation in \eqref{eq:system}.
It yields that
\begin{align*}
\left\| (k_j)_x * \rho_t^\eps \right\|_{ L^2 (\Omega)}
&\le \left\|  (k_j)_{xx} * \rho^{\eps}_{x}  \right\|_{ L^2 (\Omega)} + \left\| (k_j)_{xx} * \Big(  \rho^{\eps}  \sum_{j=1}^M a_j (v_{j}^\eps)_x \Big) \right\|_{ L^2 (\Omega)} \\
& \le \left\|  (k_j)_{xx}  \right\|_{ L^1 (\Omega)} \left\| \rho^{\eps}_{x} \right\|_{ L^2 (\Omega)} + \left\| (k_j)_{xx} \right\|_{ L^1 (\Omega)} \left\|  \rho^{\eps}  \sum_{j=1}^M a_j (v_{j}^\eps)_x \right\|_{ L^2 (\Omega)} \\
&\le \frac{1}{d_j} \left\| \rho^{\eps}_{x} \right\|_{ L^2 (\Omega)} + \frac{\sqrt{2L}}{d_j} \left\| \rho^{\eps } \right\|_{ C (\Omega)}  \sum_{j=1}^M |a_j| \left\| (v_{j}^\eps)_x  \right\|_{ C (\Omega)} \\
&\le \sup_{t\in [0,T]} \Big\{ \frac{1}{d_j} \left\| \rho^{\eps}_{x} \right\|_{ L^2 (\Omega)}  + \frac{\sqrt{2L}}{d_j} \left\| \rho^{\eps } \right\|_{ C (\Omega)}  \sum_{j=1}^M |a_j|
\Big(  \left\|  ( v_{j})_{0,x}  \right\|_{ C (\Omega)} + C_3 \left\| \rho^\eps \right\|_{  L^2 (\Omega) } \Big)\Big\} \\
&=: C_{10}.
\end{align*}
Because of  the linearity of the equation, using the same calculation, we compute that 
\begin{align*}
\left\| (k_j)_{xx} * \rho_t^\eps \right\|_{ L^2 (\Omega)}
&\le \left\|  (k_{j})_{xxx} * \rho^{\eps}_{x}  \right\|_{ L^2 (\Omega)} + \left\| (k_{j})_{xxx} * \Big(  \rho^{\eps}  \sum_{j=1}^M a_j (v_{j}^\eps)_x \Big) \right\|_{ L^2 (\Omega)} \\
& \le \left\|  (k_{j})_{xxx}  \right\|_{ L^1 (\Omega)} \left\| \rho^{\eps}_{x} \right\|_{ L^2 (\Omega)} + \left\| (k_{j})_{xxx} \right\|_{ L^1 (\Omega)} \left\|  \rho^{\eps}  \sum_{j=1}^M a_j (v_{j}^\eps)_x \right\|_{ L^2 (\Omega)} \\
&\le \Big( C_8 \left\| \rho^{\eps}_{x} \right\|_{ L^2 (\Omega)} + \sqrt{2L}C_8 \left\| \rho^{\eps } \right\|_{ C (\Omega)}  \sum_{j=1}^M |a_j| \left\| (v_{j}^\eps)_x  \right\|_{ C (\Omega)}\Big)\frac{1}{d_j} \\
&\le  \frac{C_9}{d_j}.
\end{align*}
\end{proof}

Now we estimate the difference between the solutions in the following auxiliary equations
\begin{align}
	\eps (v_{j}^\eps)_t& = d_j (v_{j}^\eps)_{xx} - v_j^\eps + \rho^\eps,\label{eq:v_j^eps}\\ 
	0 & = d_j (v_{j})_{xx} - v_j + \rho^\eps \label{eq:v_j}
\end{align}
for $j= 1, \ldots, M$, where $\rho^\eps$ denotes the solution to \eqref{eq:system}.
We note that the solution to \eqref{eq:v_j} is given by $v_j = k_j * \rho^\eps$.
We set the difference as 
\begin{equation*}
	V_j^\eps(x,t) : = v_j^\eps(x,t) - v_j(x,t).
\end{equation*}
\begin{lemma}\label{lemm:V_j-L^2}
Let $C_9$ and $C_{10} $ be positive constants in Lemma \ref{lemm:k_j*rho_t}.
Assume the initial condition \eqref{conc_init_data:v_j}.
Then for any time $T>0$ the following estimates hold:
\begin{align*}
& \left\| V_j^\eps \right\|_{ C([0,T], L^2 (\Omega))}^2 + d_j \left\| (V_{j}^\eps)_x \right\|_{ L^2(0,T, L^2 (\Omega))}^2 \le C_9^2 \eps^2 \Big( 1 + \frac{ T}{2}  \Big),\\
& \left\| (V_{j}^\eps)_x \right\|_{C([0,T], L^2 (\Omega))}^2 + d_j \left\| (V_{j}^\eps)_{xx} \right\|_{ L^2(0,T, L^2 (\Omega))}^2 \le C_{10}^2 \eps^2 \Big( 1 + \frac{ T}{2} \Big), \\
& \left\| (V_{j}^\eps)_{xx} \right\|_{ C([0,T], L^2 (\Omega))}^2  + d_j \left\| (V_{j}^\eps)_{xxx} \right\|_{ L^2(0,T, L^2 (\Omega))}^2 \le \frac{C_9^2 \eps^2}{d_j^2} \Big( 1 + \frac{ T}{2} \Big).
\end{align*}
\end{lemma}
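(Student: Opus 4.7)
The plan is to derive a linear PDE for $V_j^\eps$, then run three parallel energy estimates: one for $V_j^\eps$ itself, one after differentiating the equation once in $x$, and one after differentiating twice. In each case the forcing will be an $O(\eps)$ perturbation involving the convolutions estimated in Lemma \ref{lemm:k_j*rho_t}.

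First I would subtract \eqref{eq:v_j} from \eqref{eq:v_j^eps} and rewrite $\eps(v_j^\eps)_t=\eps(V_j^\eps)_t+\eps k_j*\rho^\eps_t$ to obtain
\begin{equation*}
\eps (V_j^\eps)_t = d_j (V_j^\eps)_{xx} - V_j^\eps - \eps\, k_j * \rho^\eps_t
\end{equation*}
together with $V_j^\eps(\cdot,0)=0$, where the vanishing initial datum follows from the prepared initial value \eqref{conc_init_data:v_j}. The periodic boundary condition is inherited by $V_j^\eps$.

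Next, I would test the equation against $V_j^\eps$, integrate over $\Omega$, and use periodicity to integrate by parts in the diffusion term. Applying Young's inequality of the form $|\eps\langle k_j*\rho^\eps_t,V_j^\eps\rangle|\le\tfrac12\|V_j^\eps\|_{L^2}^2+\tfrac{\eps^2}{2}\|k_j*\rho^\eps_t\|_{L^2}^2$ absorbs half of the zeroth-order sink and leaves
\begin{equation*}
\frac{\eps}{2}\frac{d}{dt}\|V_j^\eps\|_{L^2}^2 + d_j\|(V_j^\eps)_x\|_{L^2}^2 + \frac12\|V_j^\eps\|_{L^2}^2 \le \frac{\eps^2}{2}\|k_j*\rho^\eps_t\|_{L^2}^2 .
\end{equation*}
Integrating from $0$ to $T$ and using the bound $\|k_j*\rho^\eps_t\|_{C([0,T],L^2)}\le C_9$ from Lemma \ref{lemm:k_j*rho_t} yields $d_j\int_0^T\|(V_j^\eps)_x\|_{L^2}^2\le \tfrac{\eps^2 C_9^2 T}{2}$. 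For the $C([0,T],L^2)$ component I would drop the gradient term and treat what remains as the linear ODE $\eps y'+y\le \eps^2 C_9^2$ for $y(t)=\|V_j^\eps\|_{L^2}^2$; multiplying by the integrating factor $e^{t/\eps}/\eps$ and using $y(0)=0$ gives $y(t)\le \eps^2 C_9^2(1-e^{-t/\eps})\le \eps^2 C_9^2$. Adding the two bounds produces the first inequality.

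The remaining two inequalities follow the same template applied to the equations obtained by differentiating the $V_j^\eps$ equation once and twice in $x$; periodicity is preserved, the initial data remain zero, and the forcing terms become $-\eps(k_j)_x*\rho^\eps_t$ and $-\eps(k_j)_{xx}*\rho^\eps_t$, whose $L^2$ norms are controlled by $C_{10}$ and $C_9/d_j$ respectively via Lemma \ref{lemm:k_j*rho_t}. The main conceptual obstacle is simply recognising that naive time integration of the energy inequality only yields an $O(\eps)$ bound on $\|V_j^\eps\|_{C([0,T],L^2)}^2$ because of the factor $\eps$ on the time derivative; the sharp $O(\eps^2)$ bound requires the integrating-factor argument above, which exploits the fact that the linear decay rate $1/\eps$ is exactly the stiffness that makes the singular limit work. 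Everything else is bookkeeping with the Young inequality and the constants supplied by Lemma \ref{lemm:k_j*rho_t}.
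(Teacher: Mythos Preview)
Your proposal is correct and follows essentially the same route as the paper: derive the equation $\eps (V_j^\eps)_t = d_j (V_j^\eps)_{xx} - V_j^\eps - \eps\, k_j*\rho^\eps_t$, obtain the energy inequality via Young's inequality, use the integrating factor/Gronwall argument for the $O(\eps^2)$ pointwise-in-time bound, and integrate in time for the gradient term. The only cosmetic difference is that for the higher-order estimates the paper multiplies the undifferentiated equation by $-(V_j^\eps)_{xx}$ and integrates by parts, whereas you first differentiate in $x$ and then test against $(V_j^\eps)_x$; in the periodic setting these are the same computation.
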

When excluding the terms multiplied by $d_j$ on the left-hand sides, the above inequality holds without $T/2$ on the right-hand sides.
\begin{proof}[Proof of Lemma \ref{lemm:V_j-L^2}]
Taking the difference between equations of \eqref{eq:v_j^eps} and \eqref{eq:v_j}, we see that 
\begin{align}
	\eps (V_j^\eps)_t 
	&= - \eps v_{j,t} + d_j (V_{j}^\eps)_{xx} - V_j^\eps\notag\\
	& = - \eps k_j * \rho^\eps_t + d_j (V_{j}^\eps)_{xx} - V_j^\eps.\label{eq:V_j^eps}
\end{align}
Multiplying this equation by $V_j^\eps$, integrating over $\Omega$ and using Lemma \ref{lemm:k_j*rho_t}, we have
\begin{align}
\frac{\eps}{2} \frac{d}{dt} \left\| V_j^\eps \right\|_{ L^2 (\Omega)}^2
&= -\eps \int_\Omega (k_j * \rho^\eps_t V_j^\eps) - d_j \left\| (V_{j}^\eps)_x \right\|_{ L^2 (\Omega)}^2 - \left\| V_{j}^\eps \right\|_{ L^2 (\Omega)}^2 \notag\\
&\le \eps \left\| k_j * \rho^\eps_t \right\|_{ L^2 (\Omega)} \left\| V_j^\eps \right\|_{ L^2 (\Omega)} - d_j\left\| (V_{j}^\eps)_x \right\|_{ L^2 (\Omega)}^2 - \left\| V_{j}^\eps \right\|_{ L^2 (\Omega)}^2 \notag\\
&\le \frac{\eps^2}{2} \left\| k_j * \rho_t^\eps \right\|_{ L^2 (\Omega)}^2 - d_j\left\| (V_{j}^\eps)_x \right\|_{ L^2 (\Omega)}^2 - \frac{1}{2} \left\| V_{j}^\eps \right\|_{ L^2 (\Omega)}^2 \notag\\
&\le \frac{\eps^2}{2} C_9^2 - d_j\left\| (V_{j}^\eps)_x \right\|_{ L^2 (\Omega)}^2 - \frac{1}{2} \left\| V_{j}^\eps \right\|_{ L^2 (\Omega)}^2 \label{ineq:V_{j,x}}.
\end{align}
Applying to the classical Gronwall lemma to 
\begin{equation*}
	\frac{d}{dt} \left\| V_j^\eps \right\|_{ L^2 (\Omega)}^2 \le \eps C_9^2 - \frac{1}{\eps} \left\| V_{j}^\eps \right\|_{ L^2 (\Omega)}^2,
\end{equation*}
we have that 
\begin{equation}\label{ineq:Vj}
	\left\| V_j^\eps \right\|_{ L^2 (\Omega)}^2 \le \left\| V_j^\eps(\cdot, 0) \right\|_{ L^2 (\Omega)}^2 e^{-t/\eps } + \eps^2 C_9^2 ( 1 - e^{-t/\eps } )
	\le  \eps^2 C_9^2
\end{equation}
from the initial conditions given in \eqref{conc_init_data:v_j}.
Furthermore, integrating \eqref{ineq:V_{j,x}} over $(0,T)$, we see that 
\begin{align}\label{ineq:L2L2_Vjx}
	 d_j \left\| (V_{j}^\eps)_x \right\|_{ L^2(0,T, L^2 (\Omega))}^2
	  \le \frac{\eps}{2} \left\| V_j^\eps \right\|_{ L^2 (\Omega)}^2 + d_j \left\| (V_{j}^\eps)_x \right\|_{ L^2(0,T, L^2 (\Omega))}^2 \le \frac{\eps^2 C_9^2 T}{2}.
\end{align}
Applying $\sup_{t \in [0,T]}$ to \eqref{ineq:Vj} and adding it to \eqref{ineq:L2L2_Vjx}, we obtain the first assertion.

Similarly, 
multiplying \eqref{eq:V_j^eps} by $-(V_{j}^\eps)_{xx}$ and integrating over $\Omega$, we have
\begin{align}
	\frac{\eps}{2} \frac{d}{dt} \left\| (V_{j}^\eps)_x \right\|_{ L^2 (\Omega)} ^2
	& = -\eps \int_\Omega ((k_j)_x * \rho^\eps_t (V_{j}^\eps)_x) - d_j \left\| (V_{j}^\eps)_{xx} \right\|_{ L^2 (\Omega)}^2 - \left\| (V_{j}^\eps)_x \right\|_{ L^2 (\Omega)}^2\notag \\
	& \le \frac{\eps^2}{2} C_{10}^2 - d_j\left\| (V_{j}^\eps)_{xx} \right\|_{ L^2 (\Omega)}^2 - \frac{1}{2} \left\| (V_{j}^\eps)_x \right\|_{ L^2 (\Omega)}^2, \label{ineq:V_{j,xx}}
\end{align}
where we used Lemma \ref{lemm:k_j*rho_t}.
From the Gronwall inequality, we have 
\begin{equation}\label{ineq:Vjx}
	\left\| (V_{j}^\eps)_x \right\|_{ L^2 (\Omega)}^2 \le \left\| (V_{j}^\eps)_x(\cdot, 0) \right\|_{ L^2 (\Omega)}^2 e^{-t/\eps } + \eps^2 C_{10}^2 ( 1 - e^{-t/\eps } )
	\le  \eps^2 C_{10}^2
\end{equation}
by the initial condition \eqref{conc_init_data:v_j}.
Integrating \eqref{ineq:V_{j,xx}} over $(0,T)$, we see that 
\begin{align}\label{ineq:L2L2_Vjxx}
d_j \left\| (V_{j}^\eps)_{xx} \right\|_{ L^2(0,T, L^2 (\Omega))}^2\le
	\frac{\eps}{2} \left\| (V_{j}^\eps)_x (\cdot, T)\right\|_{ L^2 (\Omega)}^2 + d_j \left\| (V_{j}^\eps)_{xx} \right\|_{ L^2(0,T, L^2 (\Omega))}^2 \le \frac{\eps^2 C_{10}^2 T}{2}.
\end{align}
Applying $\sup_{t \in [0,T]}$ to \eqref{ineq:Vjx} and adding it to \eqref{ineq:L2L2_Vjxx}, we have the second assertion.

Because of the linearity and $v_j^\eps \in L^2(0,T,H^3(\Omega))$, the same calculation can be applied.
We see that 
\begin{equation}\label{ineq:V_{j,xxx}}
		\frac{\eps}{2} \frac{d}{dt} \left\| (V_{j}^\eps)_{xx} \right\|_{ L^2 (\Omega)} ^2
		\le \frac{\eps^2}{2} \frac{C_{9}^2}{d_j^2} - d_j\left\| (V_{j}^\eps)_{xxx} \right\|_{ L^2 (\Omega)}^2 - \frac{1}{2} \left\| (V_{j}^\eps)_{xx} \right\|_{ L^2 (\Omega)}^2.
\end{equation}
Thus, the Gronwall lemma yields that 
\begin{equation}\label{ineq:Vjxx}
	\left\| (V_{j}^\eps)_{xx} \right\|_{ L^2 (\Omega)}^2 (t)\le \left\| (V_{j}^\eps)_{xx}(\cdot, 0) \right\|_{ L^2 (\Omega)}^2 e^{-t/\eps } +  \frac{\eps^2C_9^2}{d_j^2}  ( 1 - e^{-t/\eps } )
	\le  \frac{\eps^2C_9^2}{d_j^2} 
\end{equation}
from the initial condition given in \eqref{conc_init_data:v_j}.
Integrating \eqref{ineq:V_{j,xxx}} over $(0,T)$ with respect to $t$ and \eqref{ineq:Vjxx} implies the final assertion of this lemma.
\end{proof}

\subsection{Order estimation}\label{sec:order}
Under the above preparation, we estimate the difference of solutions.
Set the difference between the solutions to the first component of \eqref{eq:system} and \eqref{nonlocal:FP} as 
\begin{equation*}
	U^\eps (x,t) : = \rho^\eps(x,t) - \rho(x,t).
\end{equation*}
We will show the following convergence.
\begin{lemma}\label{lemma:L2_conv}
Suppose that $M$ is an arbitrarily fixed natural number.
Let $\rho$ be the solution to \eqref{nonlocal:FP} equipped with $W= \sum_{j=1}^M a_j k_j$ and the initial value $\rho_0 \in C^2(\Omega)$,  and let  $\rho^\eps$ be the solution to the first component of \eqref{eq:system} with \eqref{iv:rho} and \eqref{conc_init_data:v_j}.
 Then, for any $\eps>0$ and $T>0$, there exists a positive constant $C_{11}$ that depends on $a_j$ and $T$,  but is independent of $\eps$ such that 
\begin{equation}\label{L2:conv}
	 \left\| U^\eps \right\|_{C([0, T], L^2 (\Omega))}^2
	+\left\| U^\eps_x \right\|_{ L^2( 0,T, L^2 (\Omega))}^2  
	\le C_{11} \eps^2.
\end{equation}
\end{lemma}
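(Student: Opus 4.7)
\textbf{Proof plan for Lemma \ref{lemma:L2_conv}.}
The strategy is a standard $L^2$ energy estimate for $U^\eps = \rho^\eps - \rho$ combined with the quantitative singular limit already obtained in Lemma \ref{lemm:V_j-L^2}. The key preliminary observation is that, with $v_j := k_j * \rho^\eps$ denoting the elliptic companion solving \eqref{eq:v_j} and $W = \sum_{j=1}^M a_j k_j$,
\[
\sum_{j=1}^{M} a_j v_j^\eps \;=\; \sum_{j=1}^{M} a_j V_j^\eps \;+\; \sum_{j=1}^{M} a_j\, k_j * U^\eps \;+\; W * \rho,
\]
where the first piece is already $O(\eps)$ by Lemma \ref{lemm:V_j-L^2}.

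First I subtract \eqref{nonlocal:FP} (with $W$ as above) from the first equation of \eqref{eq:system} and use the decomposition to rewrite the nonlinear term, obtaining
\[
U^\eps_t \;=\; U^\eps_{xx} \;-\; \frac{\partial}{\partial x}\Big(\rho^\eps \sum_{j=1}^M a_j (V_j^\eps)_x \;+\; \rho^\eps \sum_{j=1}^M a_j (k_j * U^\eps)_x \;+\; U^\eps \sum_{j=1}^M a_j (k_j * \rho)_x\Big).
\]
Multiplying by $U^\eps$, integrating over $\Omega$, and integrating by parts (the periodic boundary contributions vanish) yields
\[
\frac{1}{2}\frac{d}{dt}\|U^\eps\|_{L^2(\Omega)}^2 + \|U^\eps_x\|_{L^2(\Omega)}^2 \;=\; I_1 + I_2 + I_3,
\]
where each $I_\ell$ pairs $U^\eps_x$ against one of the three terms above.

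For each $I_\ell$ I apply Young's inequality $|ab|\le \eta a^2 + b^2/(4\eta)$ to isolate a small multiple of $\|U^\eps_x\|_{L^2}^2$, then bound the remaining factor as follows. In $I_1$ the $\eps^2$ smallness enters via $\|(V_j^\eps)_x\|_{L^2(\Omega)}^2 \le C_{10}^2\eps^2$ from Lemma \ref{lemm:V_j-L^2}, combined with $\|\rho^\eps\|_{C([0,T],C(\Omega))}\le C$ (Corollary \ref{cor:mildsol} and Sobolev embedding). In $I_2$ I use Young's convolution inequality $\|(k_j * U^\eps)_x\|_{L^2} = \|(k_j)_x * U^\eps\|_{L^2}\le \|(k_j)_x\|_{L^1(\Omega)}\|U^\eps\|_{L^2} = C_8\|U^\eps\|_{L^2}$ thanks to Lemma \ref{lemm:k_j_L^1}. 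In $I_3$ I bound $\|(k_j*\rho)_x\|_{C(\Omega)}$ using \eqref{kj:C} and the a priori bound on $\rho$. Choosing $\eta$ small enough to absorb all three $U^\eps_x$ contributions into $\tfrac12\|U^\eps_x\|_{L^2}^2$ on the left, I arrive at
\[
\frac{d}{dt}\|U^\eps\|_{L^2(\Omega)}^2 + \|U^\eps_x\|_{L^2(\Omega)}^2 \;\le\; C_a \|U^\eps\|_{L^2(\Omega)}^2 + C_b \eps^2,
\]
with constants independent of $\eps$. Since $U^\eps(\cdot,0)=0$ by \eqref{iv:rho}, the classical Gronwall lemma gives $\|U^\eps(\cdot,t)\|_{L^2}^2 \le C\eps^2$ uniformly on $[0,T]$, and a time integration of the differential inequality then produces the $L^2(0,T,L^2)$ bound on $U^\eps_x$ required by \eqref{L2:conv}.

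The main obstacle is structural rather than technical: one must insert $k_j*\rho^\eps$ as the intermediate object in exactly the right place so that (i) the ``bad'' $O(1)$ discrepancy $v_j^\eps - k_j*\rho^\eps$ inherits the $\eps^2$ decay from Lemma \ref{lemm:V_j-L^2}, (ii) the remaining nonlinear contributions either produce $\|U^\eps\|_{L^2}^2$ (controllable by Gronwall) or $\|U^\eps_x\|_{L^2}^2$ (absorbable into the parabolic dissipation), and (iii) the coefficients $C_a, C_b$ stay uniform in $\eps$, which is ensured by the $\eps$-independent a priori bound from Corollary \ref{cor:mildsol}.
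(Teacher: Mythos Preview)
Your proposal is correct and follows essentially the same energy-estimate approach as the paper. The only difference is cosmetic: in splitting the bilinear discrepancy $\rho^\eps\,(k_j*\rho^\eps)_x-\rho\,(k_j*\rho)_x$ you write it as $\rho^\eps\,(k_j*U^\eps)_x+U^\eps\,(k_j*\rho)_x$, whereas the paper uses the algebraically equivalent $U^\eps\,(k_j)_x*\rho^\eps+\rho\,(k_j)_x*U^\eps$; the resulting $I_2,I_3$ estimates and the Gronwall conclusion are identical in substance.
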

\begin{proof}[Proof of Lemma \ref{lemma:L2_conv}]
Taking the difference between the first equation of \eqref{eq:system} and the equation of \eqref{nonlocal:FP}, we have 
\begin{equation}\label{eq:Ueps}
	U^\eps_t = U^\eps_{xx} - \sum_{j=1}^M a_j \frac{\partial}{\partial x} \Big( \rho^\eps  (V_{j}^\eps)_x  +  U^\eps (k_j)_x *\rho^\eps  + \rho (k_j)_x*U^\eps \Big).
\end{equation}
Subsequently, multiplying \eqref{eq:Ueps} by $U^\eps$ and integrating over $\Omega$, we obtain 
\begin{align*}
	\frac{1}{2} \frac{d}{dt} \left\| U^\eps \right\|_{ L^2 (\Omega)}^2 (t)
	&= -\left\| U^\eps_x \right\|_{ L^2 (\Omega)}^2 (t) +  \sum_{j=1}^M a_j \int_\Omega U^\eps_x \Big( \rho^\eps (V_{j}^\eps)_x  +  U^\eps (k_j)_x *\rho^\eps  + \rho  (k_j)_x*U^\eps\Big)(x,t) dx\\
	& = -\left\| U^\eps_x \right\|_{ L^2 (\Omega)}^2 (t)+I_1(t) +I_2(t) +I_3(t),
\end{align*}
where each term of the integral is set as 
\begin{align*}
	&I_1(t):=\sum_{j=1}^M a_j  \int_\Omega (U^\eps_x \rho^\eps  (V_{j}^\eps)_x) (x,t) dx, \\
 	&I_2(t):= \sum_{j=1}^M a_j  \int_\Omega ( U^\eps_x U^\eps  (k_j)_x *\rho^\eps ) (x,t) dx, \\
	&I_3(t):=\sum_{j=1}^M a_j  \int_\Omega ( U^\eps_x\rho  (k_j)_x*U^\eps ) (x,t) dx,
\end{align*}
respectively.
First, we compute $I_1$.
Using the Cauchy--Schwartz inequality, Sobolev embedding theorem, and Lemma \ref{lemm:V_j-L^2}, we have
\begin{align*}
	I_1
	&\le \sum_{j=1}^M |a_j|  \int_\Omega | U^\eps_x| | \rho^\eps | | (V_{j}^\eps)_x | dx \\
	& \le \left\| \rho^\eps \right\|_{ C (\Omega)} \sum_{j=1}^M |a_j|   \left\| U^\eps_x\right\|_{ L^2 (\Omega)}  \left\| (V_{j}^\eps)_x \right\|_{ L^2 (\Omega)} \\
	& \le \eps \left\| \rho^\eps \right\|_{ C (\Omega)}  \sum_{j=1}^M ( C_{10} |a_j| )\left\| U^\eps_x\right\|_{ L^2 (\Omega)} \\
	& \le  \frac 1 2 \left\| U^\eps_x\right\|_{ L^2 (\Omega)}^2 + \frac{\eps^2 C_{12} }{2} ,
\end{align*}
where 
\[
	C_{12} := \left\| \rho^\eps \right\|_{ C([0,T],C (\Omega))}^2 \Big( \sum_{j=1}^M ( C_{10} |a_j| ) \Big)^2.
\]
Next, we compute that 
\begin{align*}
	I_2 
	& \le \sum_{j=1}^M | a_j | \left\| \rho^\eps \right\|_{ C (\Omega)} \left\| (k_j)_x \right\|_{ L^1 (\Omega)} \left\| U_x^\eps \right\|_{ L^2 (\Omega)} \left\| U^\eps \right\|_{ L^2 (\Omega)} \\
	& \le  \left\| \rho^\eps \right\|_{ C (\Omega)}  \sum_{j=1}^M ( C_8| a_j |) \left\| U_x^\eps \right\|_{ L^2 (\Omega)} \left\| U^\eps \right\|_{ L^2 (\Omega)} \\
	& \le \frac{1}{4} \left\| U_x^\eps \right\|_{ L^2 (\Omega)}^2 + C_{13}^2  \left\| U^\eps \right\|_{ L^2 (\Omega)}^2,
\end{align*}
where we used the estimate in Lemma \ref{lemm:k_j_L^1} and  
\begin{equation*}
	C_{13} := \left\| \rho^\eps \right\|_{ C ([0,T],C(\Omega))}\sum_{j=1}^M (  C_8 | a_j | ) . 
\end{equation*}
Finally, the Sobolev embedding theorem, the Young inequality and the boundedness in Lemma \ref{lemm:k_j_L^1} yield that
\begin{align*}
	I_3 
	& \le \sum_{j=1}^M | a_j |  \left\| \rho \right\|_{ C (\Omega)} \int_\Omega | U^\eps_x| |  (k_j)_x*U^\eps | dx \\
	& \le  \left\| \rho \right\|_{ C (\Omega)}  \sum_{j=1}^M( C_8 | a_j | ) \left\| U_x^\eps \right\|_{ L^2 (\Omega)} \left\| U^\eps\right\|_{ L^2 (\Omega)} \\
	&\le \frac{1}{8}  \left\| U_x^\eps \right\|_{ L^2 (\Omega)}^2 + 2C_{14}^2  \left\| U^\eps\right\|_{ L^2 (\Omega)}^2,
\end{align*}
where  the constant is defined as  
\begin{equation*}
	C_{14} := \left\| \rho \right\|_{ C([0,T], C(\Omega))} \sum_{j=1}^M (  C_8 | a_j | ) .
\end{equation*}
Summarizing these estimations, we have 
\begin{align}
	\frac{1}{2} \frac{d}{dt} \left\| U^\eps \right\|_{ L^2 (\Omega)}^2 
	&\le - \frac{1}{8} \left\| U^\eps_x \right\|_{ L^2 (\Omega)}^2 + (C_{13}^2 + 2C_{14}^2 ) \left\| U^\eps \right\|_{ L^2 (\Omega)}^2 + \frac{\eps^2}{2} C_{12}\notag\\
	&= - \frac{1}{8} \left\| U^\eps_x \right\|_{ L^2 (\Omega)}^2 + \frac{C_{15}}{2} \left\| U^\eps \right\|_{ L^2 (\Omega)}^2 + \frac{\eps^2}{2} C_{12}, \label{ineq:L^2 U_x^eps}
\end{align}
where we put $C_{15}:= 2(C_{13}^2 + 2C_{14}^2 ) $.
Applying the classical Gronwall inequality with the initial condition \eqref{iv:rho} to 
\begin{equation*}
	\frac{d}{dt} \left\| U^\eps \right\|_{ L^2 (\Omega)}^2 \le C_{15} \left\| U^\eps \right\|_{ L^2 (\Omega)}^2 + \eps^2 C_{12},
\end{equation*}
we have 
\begin{equation}\label{ineq:U}
	\left\| U^\eps \right\|_{ L^2 (\Omega)}^2 (t)\le \frac{C_{12}}{C_{15}} (e^{ C_{15} t }  - 1) \eps^2.
\end{equation}
Furthermore, integrating \eqref{ineq:L^2 U_x^eps} over $(0,T)$, we also obtain that 
\begin{align}
	\frac{1}{4} \left\| U^\eps_x \right\|_{ L^2( 0,T, L^2(\Omega))}^2
	&\le\frac{1}{4} \left\| U^\eps_x \right\|_{ L^2( 0,T, L^2(\Omega))}^2  +\left\| U^\eps(\cdot, T) \right\|_{ L^2 (\Omega)}^2 \notag\\
	&\le  C_{15} \int^T_0 \left\| U^\eps (\cdot, t)\right\|_{ L^2 (\Omega)}^2 dt + C_{12} T \eps^2 \notag\\
	&\le  \eps^2 C_{12} e^{C_{15} T} T. \label{ineq:L2L2Ux}
\end{align}
Defining $C_{11} := C_{12} (e^{ C_{15} T }  - 1)/ C_{15} + 4  C_{12} e^{C_{15} T} T$ and adding \eqref{ineq:U} and \eqref{ineq:L2L2Ux} imply the assertion of this lemma.
\end{proof}
Similarly to this lemma, we can obtain the following convergence.
\begin{lemma}\label{lemma:H1_conv}
Suppose the same assumptions as Lemma \ref{lemma:L2_conv}.
 Then, for any $\eps>0$ and $T>0$, there exists a positive constant $C_{16}$ that depends on $a_j$ and $T$, and is independent of $\eps$
 such that 
\begin{equation*}
	\left\| U_x^\eps \right\|_{ C([0, T], L^2 (\Omega))}^2
	+\left\| U^\eps_{xx} \right\|_{ L^2( 0,T, L^2(\Omega))}^2  
	\le C_{16} \eps^2.
\end{equation*}
\end{lemma}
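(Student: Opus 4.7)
The plan is to mirror the $L^2$ energy argument of Lemma \ref{lemma:L2_conv} one derivative higher. Starting from \eqref{eq:Ueps}, I will multiply by $-U^\eps_{xx}$ and integrate over $\Omega$; thanks to the periodic boundary conditions, the time derivative on the left contributes $\frac{1}{2}\frac{d}{dt}\|U^\eps_x\|_{L^2}^2$, the linear diffusion produces the good term $-\|U^\eps_{xx}\|_{L^2}^2$, and the three flux contributions expand by the product rule into six integrals involving $(V_j^\eps)_x$, $(V_j^\eps)_{xx}$, $(k_j)_x*\rho^\eps$, $(k_j)_{xx}*\rho^\eps$, $(k_j)_x*U^\eps$, and $(k_j)_{xx}*U^\eps$.

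Each of these six integrals will be handled by Cauchy--Schwarz followed by Young's convolution inequality with the $L^1$ bounds $\|(k_j)_x\|_{L^1}=C_8$ and $\|(k_j)_{xx}\|_{L^1}=1/d_j$ from Lemma \ref{lemm:k_j_L^1}, together with the Sobolev embedding $H^1(\Omega)\hookrightarrow C(\Omega)$ so that $\|\rho^\eps\|_{C(\Omega)}$ and $\|\rho\|_{C(\Omega)}$ can be replaced by $\eps$-independent constants via Corollary \ref{cor:mildsol}. Every factor of $\|U^\eps_{xx}\|_{L^2}$ that arises will be split off by the elementary Young inequality so that a small multiple of $\|U^\eps_{xx}\|_{L^2}^2$ can be absorbed back into the diffusion on the left. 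The source of the $\eps^2$ factor on the right will be the three inequalities in Lemma \ref{lemm:V_j-L^2}, in particular $\|(V_j^\eps)_x\|_{L^2}\le C_{10}\eps$ and $\|(V_j^\eps)_{xx}\|_{L^2}\le C_9\eps/d_j$, together with the already-proved $L^2$ estimate $\|U^\eps\|_{L^2}^2 \le C_{11}\eps^2$ from Lemma \ref{lemma:L2_conv}.

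After this absorption I expect a differential inequality of the form
\begin{equation*}
\frac{d}{dt}\|U^\eps_x\|_{L^2}^2 + \tfrac{1}{2}\|U^\eps_{xx}\|_{L^2}^2 \le A\,\|U^\eps_x\|_{L^2}^2 + B\eps^2,
\end{equation*}
with constants $A,B$ independent of $\eps$ but depending on $T$, $\{a_j\}$, $\{d_j\}$, and the $\eps$-uniform $L^\infty_t C(\Omega)$ bounds on $\rho^\eps$ and $\rho$. Since $\rho^\eps_0=\rho_0$ by \eqref{iv:rho} gives $U^\eps(\cdot,0)\equiv 0$ and hence $U^\eps_x(\cdot,0)\equiv 0$, the classical Gronwall inequality yields the pointwise bound $\|U^\eps_x\|_{L^2}^2(t)\le B\eps^2(e^{AT}-1)/A$ on $[0,T]$, and integrating the same differential inequality on $(0,T)$ produces the remaining control on $\|U^\eps_{xx}\|_{L^2(0,T;L^2(\Omega))}^2$. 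Summing the two yields the stated inequality with an appropriate $C_{16}$.

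The main obstacle is the integral $\int_\Omega U^\eps_{xx}\,\rho^\eps\,(V_j^\eps)_{xx}\,dx$, which demands an $L^2$ bound on $(V_j^\eps)_{xx}$ of order $\eps$; this comes from the third estimate of Lemma \ref{lemm:V_j-L^2}, whose proof crucially uses $v_j^\eps \in L^2(0,T,H^3(\Omega))$ from Corollary \ref{cor:mildsol} and the assumption $\rho_0\in C^2(\Omega)$ to kill the initial data for $(V_j^\eps)_{xx}$. A secondary delicate point is the integral $\int_\Omega U^\eps_{xx}\,\rho_x\,(k_j)_x*U^\eps\,dx$, where the $O(\eps)$ smallness of $\|U^\eps\|_{L^2}$ supplied by Lemma \ref{lemma:L2_conv} is essential in order to route its contribution into the $B\eps^2$ source term rather than into the coefficient $A$ of the linear Gronwall term, which would otherwise break the uniform-in-$\eps$ estimate.
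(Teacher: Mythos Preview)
Your proposal is correct and matches the paper's proof essentially line for line: multiply \eqref{eq:Ueps} by $-U^\eps_{xx}$, expand by the product rule into six integrals, estimate each using Lemma~\ref{lemm:k_j_L^1}, Lemma~\ref{lemm:V_j-L^2}, and the already-established $L^2$ bound of Lemma~\ref{lemma:L2_conv}, absorb the $\|U^\eps_{xx}\|_{L^2}^2$ pieces, then apply Gronwall and integrate in time. One small point to watch in execution: in the term $\sum_j a_j\int_\Omega U^\eps_{xx}\,\rho^\eps_x\,(V_j^\eps)_x\,dx$ the factor present is $\rho^\eps_x$ (only in $L^2$ uniformly in $t$), so you will need $(V_j^\eps)_x$ in $C(\Omega)$, which comes from combining the second and third estimates of Lemma~\ref{lemm:V_j-L^2} (giving $(V_j^\eps)_x\in H^1$ with norm $\lesssim\eps$) with the Sobolev embedding---exactly as the paper does; your cited $\|(V_j^\eps)_x\|_{L^2}$ alone is not enough for that integral.
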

\begin{proof}[Proof of Lemma \ref{lemma:H1_conv}]
Similarly to Lemma \ref{lemma:L2_conv}, multiplying \eqref{eq:Ueps} by $-U^\eps_{xx}$ and integrating over $\Omega$, we have 
\begin{align*}
	\frac{1}{2} \frac{d}{dt} \left\| U^\eps_x \right\|_{ L^2 (\Omega)}^2 (t)
	&= -\left\| U^\eps_{xx} \right\|_{ L^2 (\Omega)}^2 (t)+ \sum_{j=1}^M a_j  \int_\Omega U^\eps_{xx} \frac{\partial}{\partial x} \Big( \rho^\eps   (V_{j}^\eps)_x  +  U^\eps  (k_j)_x *\rho^\eps  + \rho (k_j)_x*U^\eps \Big)(x,t)dx\\
	&= -\left\| U^\eps_{xx} \right\|_{ L^2 (\Omega)}^2 (t)
	+ \sum_{j=1}^M a_j  \int_\Omega U^\eps_{xx} \frac{\partial}{\partial x} ( \rho^\eps  (V_{j}^\eps)_x ) (x,t)dx \\
	&\quad + \sum_{j=1}^M a_j  \int_\Omega U^\eps_{xx} \frac{\partial}{\partial x} ( U^\eps  (k_j)_x *\rho^\eps ) (x,t)dx 
	+ \sum_{j=1}^M a_j  \int_\Omega U^\eps_{xx} \frac{\partial}{\partial x} ( \rho  (k_j)_x*U^\eps ) (x,t)dx \\
	& =: -\left\| U^\eps_{xx} \right\|_{ L^2 (\Omega)}^2 (t)+\mathcal{I}_1(t) +\mathcal{I}_2(t) +\mathcal{I}_3(t) +\mathcal{I}_4(t) +\mathcal{I}_5(t) +\mathcal{I}_6(t),
\end{align*}
where we defined each term of energy term by the integral as
\begin{equation*}
\begin{array}{ll}
\ds\mathcal{I}_1(t):=  \sum_{j=1}^M a_j  \int_\Omega ( U^\eps_{xx}   \rho^\eps_x  (V_{j}^\eps)_x ) (x,t) dx,
&\ds \mathcal{I}_2(t):= \sum_{j=1}^M a_j  \int_\Omega ( U^\eps_{xx}   \rho^\eps  (V_{j}^\eps)_{xx} ) (x,t) dx,\\
\ds\mathcal{I}_3(t):= \sum_{j=1}^M a_j  \int_\Omega ( U^\eps_{xx}  U^\eps_x  (k_j)_x *\rho^\eps ) (x,t) dx,
 &\ds\mathcal{I}_4(t):= \sum_{j=1}^M a_j  \int_\Omega ( U^\eps_{xx}  U^\eps  (k_j)_{xx} *\rho^\eps ) (x,t) dx,\\
\ds\mathcal{I}_5(t):= \sum_{j=1}^M a_j  \int_\Omega ( U^\eps_{xx}  \rho_x  (k_j)_x*U^\eps ) (x,t) dx,
&\ds\mathcal{I}_6(t):= \sum_{j=1}^M a_j  \int_\Omega ( U^\eps_{xx}  \rho  (k_j)_{xx}*U^\eps ) (x,t) dx.\\
\end{array}	
\end{equation*}

First, we estimate $\mathcal{I}_1$.
From Lemma \ref{lemm:V_j-L^2} and the Sobolev embedding theorem there exists a positive constant $C_s$ such that $\left\| V_{j,x} \right\|_{ C (\Omega)} \le C_s \left\| V_{j,x}  \right\|_{ H^1 (\Omega)} $.
Then we see that 
\begin{align*}
	\ds\mathcal{I}_1 
	&\le  \sum_{j=1}^M \left\| (V_{j}^\eps)_x \right\|_{ C (\Omega)} |a_j| \left\| U^\eps_{xx} \right\|_{ L^2 (\Omega)} \left\|  \rho^\eps_x  \right\|_{ L^2 (\Omega)} \\
	&\le \eps \sum_{j=1}^M C_s (C_{10} + \frac{C_9}{d_j}  )  |a_j| \left\|  \rho^\eps_x  \right\|_{ L^2 (\Omega)} \left\| U^\eps_{xx} \right\|_{ L^2 (\Omega)}  \\
	& \le \eps^2 C_{17} +  \frac{1}{8}  \left\| U^\eps_{xx} \right\|_{ L^2 (\Omega)} ^2,
\end{align*}
where 
\begin{equation*}
	C_{17} := 2\Big(  \left\|  \rho^\eps_x  \right\|_{ C([0,T], L^2 (\Omega))}  \sum_{j=1}^M C_s (C_{10} + \frac{C_9}{d_j}  )  |a_j| \Big)^2
\end{equation*}
Next, we compute $ \mathcal{I}_2$ as 
\begin{align*}
	 \mathcal{I}_2 
	 &\le  \left\| \rho^\eps  \right\|_{ C (\Omega)}  \sum_{j=1}^M |a_j|  \left\| U^\eps_{xx} \right\|_{ L^2 (\Omega)}  \left\|  (V_{j}^\eps)_{xx} \right\|_{ L^2 (\Omega)} \\
	 & \le \eps  \left\| \rho^\eps  \right\|_{ C (\Omega)}  \sum_{j=1}^M \Big( \frac{C_9}{d_j} |a_j| \Big) \left\| U^\eps_{xx} \right\|_{ L^2 (\Omega)}\\
	 & \le  \eps^2 C_{18} + \frac{1}{8} \left\| U^\eps_{xx} \right\|_{ L^2 (\Omega)}^2,
\end{align*}
where 
\begin{equation*}
	C_{18} := 2 \Big( \left\| \rho^\eps  \right\|_{ C([0,T], C(\Omega))}  \sum_{j=1}^M \Big( \frac{C_9}{d_j} |a_j| \Big)\Big)^2.
\end{equation*}
From $\rho^\eps \in C(\Omega)$ and  Lemma \ref{lemm:k_j_L^1} we see that 
\begin{align*}
	 \mathcal{I}_3 
	 &\le \left\| \rho^\eps \right\|_{ C (\Omega)}  \sum_{j=1}^M |a_j| \left\| (k_j)_x \right\|_{ L^1 (\Omega)} \left\| U^\eps_{xx}   \right\|_{ L^2 (\Omega)}  \left\| U^\eps_x \right\|_{ L^2 (\Omega)} \\
	 & \le \left\| \rho^\eps \right\|_{ C (\Omega)}  \sum_{j=1}^M ( C_8 |a_j| ) \left\| U^\eps_x \right\|_{ L^2 (\Omega)} \left\| U^\eps_{xx}   \right\|_{ L^2 (\Omega)}   \\
	 & \le \frac{ C_{19} }{2} \left\| U^\eps_x \right\|_{ L^2 (\Omega)}^2 +\frac{1}{8} \left\| U^\eps_{xx} \right\|_{ L^2 (\Omega)},
\end{align*}
where we put
\begin{equation*}
	C_{19} := 4\Big(  \left\| \rho^\eps \right\|_{ C([0,T],C (\Omega))}  \sum_{j=1}^M ( C_8 |a_j| ) \Big)^2.
\end{equation*}
Similarly to that of $\mathcal{I}_3$, we obtain that
\begin{align*}
	 \mathcal{I}_4 
	 &\le \left\| \rho^\eps \right\|_{ C (\Omega)}  \sum_{j=1}^M |a_j| \left\| (k_j)_{xx} \right\|_{ L^1 (\Omega)}  \left\| U^\eps_{xx}   \right\|_{ L^2 (\Omega)}  \left\| U^\eps \right\|_{ L^2 (\Omega)}\\
	 & \le \left\| \rho^\eps \right\|_{ C (\Omega)}  \sum_{j=1}^M \frac{ |a_j| }{ d_j }  \left\| U^\eps \right\|_{ L^2 (\Omega)} \left\| U^\eps_{xx}   \right\|_{ L^2 (\Omega)}  \\
	 & \le 2 \Big( \left\| \rho^\eps \right\|_{ C (\Omega)}  \sum_{j=1}^M \frac{ |a_j| }{ d_j } \Big)^2  \left\| U^\eps \right\|_{ L^2 (\Omega)}^2 +  \frac 1 8 \left\| U^\eps_{xx}   \right\|_{ L^2 (\Omega)}^2 \\
	 & \le C_{20} \eps^2+  \frac 1 8 \left\| U^\eps_{xx}   \right\|_{ L^2 (\Omega)}^2,
\end{align*}
where we used the estimate \eqref{L2:conv} in Lemma \ref{lemma:L2_conv} and we put
\begin{equation*}
	C_{20}:= 2 \Big( \left\| \rho^\eps \right\|_{ C([0,T],C (\Omega))}  \sum_{j=1}^M \frac{ |a_j| }{ d_j } \Big)^2 C_{11}.
\end{equation*}
Hereafter, we often use the estimate \eqref{L2:conv} in Lemma \ref{lemma:L2_conv}.
We compute $\mathcal{I}_5$ as 
\begin{align*}
	\mathcal{I}_5 
	& \le   \sum_{j=1}^M |a_j| \left\| (k_j)_x \right\|_{ C (\Omega)} \left\| U^\eps \right\|_{ L^1 (\Omega)} \left\|  \rho_x \right\|_{ L^2 (\Omega)} \left\|  U^\eps_{xx}  \right\|_{ L^2 (\Omega)} \\
	& \le \left\| U^\eps \right\|_{ L^1 (\Omega)}  \sum_{j=1}^M \frac{|a_j|}{2d_j} \left\|  \rho_x \right\|_{ L^2 (\Omega)} \left\|  U^\eps_{xx}  \right\|_{ L^2 (\Omega)}\\
	& \le C_{21}  \eps^2  +  \frac{1}{8} \left\|  U^\eps_{xx}  \right\|_{ L^2 (\Omega)}^2,
\end{align*}
where we used \eqref{kj:C} in Lemma \ref{lemm:k_j_L^1}  and \eqref{L2:conv} in Lemma \ref{lemma:L2_conv}, and  we put
\begin{equation*}
	C_{21} :=4L \Big(  \left\|  \rho_x \right\|_{ C([0,T], L^2 (\Omega))}  \sum_{j=1}^M  \frac{|a_j|}{2d_j}  \Big)^2 C_{11}.
\end{equation*}
Similarly, we see that
\begin{align*}
	\mathcal{I}_6 
	&\le \left\|  \rho \right\|_{ C (\Omega)}  \sum_{j=1}^M |a_j|  \left\|  U^\eps_{xx} \right\|_{ L^2 (\Omega)}  \left\|  (k_j)_{xx} \right\|_{ L^1 (\Omega)}  \left\| U^\eps  \right\|_{ L^2 (\Omega)} \\
	& \le  \left\|  \rho \right\|_{ C (\Omega)}  \sum_{j=1}^M \frac{ |a_j| }{ d_j } \left\| U^\eps  \right\|_{ L^2 (\Omega)}  \left\|  U^\eps_{xx} \right\|_{ L^2 (\Omega)}\\
	& \le 2\Big( \left\|  \rho \right\|_{ C (\Omega)}  \sum_{j=1}^M \frac{ |a_j| }{ d_j } \Big)^2  \left\| U^\eps  \right\|_{ L^2 (\Omega)}^2 + \frac{1}{8}  \left\|  U^\eps_{xx} \right\|_{ L^2 (\Omega)}\\
	&\le C_{22}\eps^2 +  \frac{1}{8}  \left\|  U^\eps_{xx} \right\|_{ L^2 (\Omega)},
\end{align*}
where 
\begin{equation*}
	C_{22} := 2\Big( \left\|  \rho \right\|_{ C([0,T], C (\Omega))}  \sum_{j=1}^M \frac{ |a_j| }{ d_j } \Big)^2 C_{11}.
\end{equation*}
Combining these estimation and setting a positive constant as
\begin{equation*}
	C_{23}:= 2( C_{17} + C_{18}+ C_{19} + C_{20} + C_{21} + C_{22} ),
\end{equation*}
we have 
\begin{equation}\label{ineq:L^2 U_xx^eps}
	\frac{1}{2} \frac{d}{dt} \left\| U^\eps_x \right\|_{ L^2 (\Omega)}^2 
	\le -\frac{1}{4}  \left\|  U^\eps_{xx} \right\|_{ L^2 (\Omega)}^2 + \frac{C_{19}}{2} \left\| U_x^\eps \right\|_{ L^2 (\Omega)}^2 +   \frac{C_{23} }{2}  \eps^2.
\end{equation}
Applying to Gronwall inequality to \eqref{ineq:L^2 U_xx^eps} without $-\left\|  U^\eps_{xx} \right\|_{ L^2 (\Omega)}^2/4$,
we obtain that 
\begin{equation}\label{ineq:Ux}
	\left\| U^\eps_x \right\|_{ L^2 (\Omega)}^2 (t) \le \left\| U^\eps_x (\cdot, 0) \right\|_{ L^2 (\Omega)}^2 +\frac{C_{23}}{C_{19}} ( e^{C_{19} t } -1)\eps^2
	= \frac{C_{23}}{C_{19}} ( e^{C_{19} t } -1)\eps^2
\end{equation}
by the initial condition in \eqref{iv:rho}.

Finally, integrating \eqref{ineq:L^2 U_xx^eps} over $(0,T)$, we obtain that 
\begin{equation}\label{ineq:L2L2Uxx}
	\frac 1 2  \left\|  U^\eps_{xx} \right\|_{ L^2 (0,T, L^2(\Omega))}^2 
	\le \left\| U^\eps_x (\cdot, T)\right\|_{ L^2 (\Omega)}^2 + \frac 1 2  \left\|  U^\eps_{xx} \right\|_{ L^2 (0,T, L^2(\Omega))}^2 
	\le \eps^2 C_{23} e^{C_{19} T} T.
\end{equation}
Adding \eqref{ineq:Ux} and \eqref{ineq:L2L2Uxx} yields the assertion of this lemma.
\end{proof}

Then we obtain the following proof.
\begin{proof}[Proof of Theorem \ref{thm:order_est}]
Putting 
\begin{align*}
    C_1:= 2\Big( C_{11} + C_{11}T + C_{16} \Big)^{1/2}, \quad 
    C_2:= C_1 + 2\Big( 1+\frac{T}{2} \Big)^{1/2} \Big( C_9^2 +C_9^2T+C_{10}^2\Big)^{1/2},
\end{align*}
where we used Lemma \ref{lemma:L2_conv}, Lemma \ref{lemma:H1_conv} for $C_1$ and Lemma \ref{lemm:V_j-L^2} for $C_2$.
We obtain the convergence of the assertion in this theorem.
\end{proof}

Next, we prove Lemma \ref{lemm:rho1_rho2}.
\begin{proof}[Proof of Lemma \ref{lemm:rho1_rho2} and Remark \ref{rem:rho1_rho2}]
Set the differences between the solutions and between the kernels as 
\begin{equation*}
	U(x,t):= \rho_1(x,t) - \rho_2(x,t), \quad W_{\mbox{e}} (x):= w_1(x)-w_2(x),
\end{equation*}
respectively.
The method for this proof is similar to that of Lemmas \ref{lemma:L2_conv} and \ref{lemma:H1_conv}.
Taking the difference between the equations $(\mbox{P}_1)$ and $(\mbox{P}_2)$,  we have
\begin{align}\label{eq:U}
	U_t = U_{xx} - \frac{\partial}{\partial x}( \rho_1w_{1,x} *U + \rho_1W_{\mbox{e},x}*\rho_2 +Uw_{2,x}*\rho_2 ).
\end{align}
Multiplying it by $U$ and integrating it over $\Omega$, we have 
\begin{align*}
	\frac{1}{2} \frac{d}{dt} \left\| U \right\|_{ L^2 (\Omega)} ^2
	= -\left\| U_x \right\|_{ L^2 (\Omega)} ^2 + \int_{\Omega} ( U_x(  \rho_1w_{1,x} *U + \rho_1W_{\mbox{e},x}*\rho_2 +Uw_{2,x}*\rho_2) ) .
\end{align*}
Since
\begin{align*}
	 \int_{\Omega} ( U_x \rho_1w_{1,x} *U ) 
	 &\le  \left\| \rho_1 \right\|_{ C (\Omega)}  \left\| w_1 \right\|_{ L^1 (\Omega)} \left\| U_x \right\|_{ L^2 (\Omega)}^2, \\
	 \int_{\Omega} ( U_x \rho_1W_{\mbox{e},x}*\rho_2 )  
	 &\le \left\| \rho_1 \right\|_{ C (\Omega)} \left\| W_{\mbox{e}} \right\|_{ L^1 (\Omega)}\left\| \rho_{2,x} \right\|_{ L^2 (\Omega)}  \left\| U_x \right\|_{ L^2 (\Omega)},\\
	  \int_{\Omega} ( U_xUw_{2,x}*\rho_2 
	  & \le \left\| U \right\|_{ C (\Omega)}  \left\| U_x \right\|_{ L^2 (\Omega)} \left\| w_2 \right\|_{ L^1 (\Omega)} \left\| \rho_{2,x} \right\|_{ L^2 (\Omega)},
\end{align*}
we can compute that 
\begin{align}\label{ineq:Ux_L2}
	\frac{1}{2} \frac{d}{dt} \left\| U \right\|_{ L^2 (\Omega)} ^2
	\le -\frac{1}{4} \left\| U_x \right\|_{ L^2 (\Omega)}^2 + \frac{C_{24}}{2}  \left\| U \right\|_{ L^2 (\Omega)} ^2 + \frac{C_{25}}{2} \left\| W_{\mbox{e}} \right\|_{ L^1 (\Omega)}^2
\end{align}
with suitable positive constants $C_{24}$ and $C_{25}$.
Applying the Gronwall inequality to this, we have
\begin{equation}\label{ineq:rho12}
	\left\| U \right\|_{ C([0,T], L^2 (\Omega))}^2 \le \frac{C_{25}}{C_{24}} (e^{C_{24} T} -1 )\left\| W_e \right\|_{ L^1 (\Omega)}^2.
\end{equation}
Integrating \eqref{ineq:Ux_L2} over $[0,T]$ and adding it and \eqref{ineq:rho12}, we have the assertion for this Lemma.

Next, multiplying \eqref{eq:U} by $-U_{xx}$ and integrating it over $\Omega$, we consider the following equation of energy
\begin{align*}
	\frac{1}{2} \frac{d}{dt} \left\| U_{x} \right\|_{ L^2 (\Omega)} ^2
	=-\left\| U_{xx}\right\|_{ L^2 (\Omega)} ^2 + \int_{\Omega} ( U_{xx}  \frac{\partial }{\partial x} (  \rho_1w_{1,x} *U + \rho_1W_{e,x}*\rho_2 +Uw_{2,x}*\rho_2) ).
\end{align*}
Since 
\begin{align*}
& \int_{\Omega} ( U_{xx} ( \rho_{1,x}w_{1,x} *U + \rho_1w_1 *U_{xx}) ) \\
 &\le \left\| w_{1,x} \right\|_{ L^1 (\Omega)} \left\| U \right\|_{ C (\Omega)} \left\| U_{xx} \right\|_{ L^2 (\Omega)} \left\| \rho_{1,x} \right\|_{ L^2 (\Omega)} 
 + \left\| \rho_1\right\|_{ C (\Omega)}  \left\| w_{1}\right\|_{ L^1 (\Omega)} \left\| U_{xx} \right\|_{ L^2 (\Omega)}^2,\\
 &\int_{\Omega} ( U_{xx} ( \rho_{1,x}W_{\mbox{e},x}*\rho_2 +  \rho_1W_{\mbox{e},x}*\rho_{2,x}) )  \\
 & \le \left\| \rho_2 \right\|_{ C (\Omega)} \left\| W_{\mbox{e},x} \right\|_{ L^1 (\Omega)} \left\| U_{xx} \right\|_{ L^2 (\Omega)} \left\| \rho_{1,x} \right\|_{ L^2 (\Omega)} 
 + \left\| \rho_1 \right\|_{ C (\Omega)} \left\| W_{\mbox{e},x} \right\|_{ L^1 (\Omega)} \left\| \rho_{2,x} \right\|_{ L^2 (\Omega)} \left\| U_{xx} \right\|_{ L^2 (\Omega)},\\
 &\int_{\Omega} ( U_{xx} ( U_xw_{2,x}*\rho_2 + U w_{2,x}*\rho_{2,x} ) )  \\
 &\le \left\| \rho_2 \right\|_{ C (\Omega)} \left\|w_{2,x} \right\|_{ L^1 (\Omega)}  \left\| U_x \right\|_{ L^2 (\Omega)}  \left\| U_{xx} \right\|_{ L^2 (\Omega)}
 + \left\| \rho_{2,x} \right\|_{ L^2 (\Omega)} \left\|w_{2,x} \right\|_{ L^1 (\Omega)} \left\| U \right\|_{ C (\Omega)}  \left\| U_{xx} \right\|_{ L^2 (\Omega)},
\end{align*}
we can obtain that
\begin{align}
	\frac{1}{2} \frac{d}{dt} \left\| U_{x} \right\|_{ L^2 (\Omega)} ^2
	&\le -\frac{1}{4} \left\| U_{xx} \right\|_{ L^2 (\Omega)}^2 + \frac{C_{26}}{2} \left\| U \right\|_{ L^2 (\Omega)}^2 +  \frac{C_{27}}{2} \left\| U_x \right\|_{ L^2 (\Omega)}^2
	+ \frac{C_{28}}{2}  \left\| W_{\mbox{e},x} \right\|_{ L^1 (\Omega)}^2 \notag\\
	&\le -\frac{1}{4} \left\| U_{xx} \right\|_{ L^2 (\Omega)}^2 +  \frac{C_{27}}{2} \left\| U_x \right\|_{ L^2 (\Omega)}^2 + \frac{C_{29}}{2}  \left\| W_{\mbox{e}} \right\|_{ W^{1,1} (\Omega)}^2 \label{ineq:Uxx_L2}
\end{align}
with suitable positive constants from $C_{26}$ to $C_{29}$.
Therefore, the Gronwall inequality yields that 
\begin{equation}\label{ineq:rho12x}
	\left\| U_x \right\|_{ C( [0,T], L^2 (\Omega))}^2 \le \frac{C_{29}}{C_{27}} (e^{C_{27} T} -1 )\left\| W_{\mbox{e}} \right\|_{ W^{1,1} (\Omega)}^2.
\end{equation}

Finally integrating \eqref{ineq:Ux_L2} and \eqref{ineq:Uxx_L2} over $(0,T)$ and adding them and \eqref{ineq:rho12} and \eqref{ineq:rho12x}, we obtain the  assertion for Remark \ref{rem:rho1_rho2}.
\end{proof}

\section{Coefficients of linear sum}\label{sec:cof_aj}
We now explain the method used for determining the coefficient of the linear sum of the fundamental solution $\{ a_j \}_{j=1}^M$ for a given even potential function $W$.
Furthermore, we will perform the numerical simulations of the approximation of $W$ by sum of $\cosh j(L-|x|)$, and numerical simulations of \eqref{nonlocal:FP} and \eqref{eq:system} with this series expansion.
Since $W$ is even, we only consider $[0,L]$.
First, we provide the following lemma with respect to the $n$ degree Chebyshev polynomial $T_n$.
We set coefficients as 
\begin{align*}
    C^n_k := \frac{n}{2} (-1)^k \frac{(n-k-1)!}{k!(n-2k)!} 2^{n-2k}, \quad (k=0, \cdots, \Big[\frac n 2 \Big] )
\end{align*}
for $n \in \N$, where $[\cdot]$ denotes the Gauss symbol.
By this constant the Chebyshev polynomial of $n$ degree $T_n$ can be expressed as $T_n(x) = \sum_{k=0}^{[n/2]}C^n_k x^{n-2k}$ for $x\in [-1,1]$.
Utilizing this equation, we have the following Lemma regarding the change of the variable for $T_n$.
\begin{lemma}\label{lemm:Cheby}
Setting  
\begin{equation*}
    \mu^n_{k,j}:= C^n_k \Big( \frac{2}{b-a} \Big)^{n-2k}\begin{pmatrix} n-2k \\ j \end{pmatrix} \Big( -\frac{b+a}{2} \Big)^j
\end{equation*}
for $n \in \N$, we define the coefficient as
\begin{equation*}
    \xi^n_k :=
    \left\{
    \begin{aligned}
        &\sum_{l=0}^{[n/2]-[(k+1)/2]}  \mu^n_{l,n-2l-k}, \quad \text{if $n$ is even}, \\ 
        &\sum_{l=0}^{[n/2]-[k/2]}  \mu^n_{l,n-2l-k}, \quad \text{otherwise}. \\ 
    \end{aligned}
    \right.
\end{equation*}
Then, 
\begin{equation*}
    T_n\Big( \frac{2x - (b+a)}{b-a} \Big) = \sum_{k=0}^n \xi^n_k x^k, \quad x \in [a,b]
\end{equation*}
holds.
\end{lemma}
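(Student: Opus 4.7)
\begin{Proof}[Proof proposal]
The plan is a direct expansion in two stages: first I substitute the affine change of variables into the explicit monomial form of $T_n$, then I apply the binomial theorem and relabel the summation indices so that the coefficient of each power $x^k$ can be read off. Finally, the floor-function formula for the upper limit of summation in $\xi^n_k$ is obtained by a small case analysis on the parities of $n$ and $k$.

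First I would start from the classical expansion
\begin{equation*}
T_n(y)=\sum_{k=0}^{[n/2]} C^n_k\, y^{n-2k},\qquad y\in[-1,1],
\end{equation*}
and substitute $y=\frac{2x-(b+a)}{b-a}=\frac{2}{b-a}\bigl(x-\frac{b+a}{2}\bigr)$. Pulling out the constant factor gives
\begin{equation*}
T_n\!\left(\frac{2x-(b+a)}{b-a}\right)=\sum_{k=0}^{[n/2]} C^n_k\Bigl(\tfrac{2}{b-a}\Bigr)^{n-2k}\!\Bigl(x-\tfrac{b+a}{2}\Bigr)^{n-2k}.
\end{equation*}
Then I apply the binomial theorem to $\bigl(x-\frac{b+a}{2}\bigr)^{n-2k}$, which produces exactly the factor $\binom{n-2k}{j}\bigl(-\frac{b+a}{2}\bigr)^j x^{n-2k-j}$, and therefore
\begin{equation*}
T_n\!\left(\frac{2x-(b+a)}{b-a}\right)=\sum_{k=0}^{[n/2]}\sum_{j=0}^{n-2k}\mu^n_{k,j}\,x^{n-2k-j},
\end{equation*}
with $\mu^n_{k,j}$ as defined in the statement.

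Next I would collect the coefficient of $x^p$ for each $p\in\{0,1,\dots,n\}$ by enforcing the relation $n-2l-j=p$, i.e.\ $j=n-2l-p$, where $l$ is the outer summation variable. The admissibility constraints $0\le j\le n-2l$ and $0\le l\le[n/2]$ then collapse into the single upper bound $l\le[(n-p)/2]$, so the coefficient of $x^p$ equals $\sum_{l=0}^{[(n-p)/2]}\mu^n_{l,n-2l-p}$.

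The only nontrivial step is then to verify that $[(n-p)/2]$ coincides with $[n/2]-[(p+1)/2]$ when $n$ is even and with $[n/2]-[p/2]$ when $n$ is odd, which is the form in which $\xi^n_p$ (written $\xi^n_k$ in the statement) is given. This is a short four-way case check on the parities of $n$ and $p$: in each case one rewrites $[n/2]$ and $[(p+1)/2]$ or $[p/2]$ in closed form and compares with $[(n-p)/2]$. This floor-arithmetic bookkeeping is the only potential source of confusion; once it is dispatched, the identity of the lemma follows immediately from the computation above.
\end{Proof}
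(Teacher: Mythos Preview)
Your proposal is correct and follows essentially the same approach as the paper's proof: substitute the affine change of variables into $T_n(y)=\sum_{k=0}^{[n/2]}C^n_k y^{n-2k}$, apply the binomial theorem, and then collect the coefficient of each $x^k$. In fact you are more explicit than the paper, which simply writes the last equality $\sum_{k=0}^{[n/2]}\sum_{j=0}^{n-2k}\mu^n_{k,j}x^{n-2k-j}=\sum_{k=0}^n\xi^n_k x^k$ without spelling out the floor-arithmetic identity $[(n-p)/2]=[n/2]-[(p+1)/2]$ (for $n$ even) and $[(n-p)/2]=[n/2]-[p/2]$ (for $n$ odd) that you correctly identify and verify by the four-way parity check.
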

\begin{proof}[Proof of Lemma \ref{lemm:Cheby}]
    We compute that
    \begin{align*}
        T_n\Big( \frac{2x - (b+a)}{b-a} \Big) 
        &= \sum_{k=0}^{[n/2]}C^n_k \Big( \frac{2x -(b+a) }{ b-a } \Big)^{n-2k}
        =\sum_{k=0}^{[n/2]}C^n_k \Big( \frac{2}{b-a} \Big)^{n-2k} \Big( x - \frac{b+a}{2}\Big)^{n-2k}\\
        & = \sum_{k=0}^{[n/2]}C^n_k \Big( \frac{2}{b-a} \Big)^{n-2k} \sum_{j=0}^{n-2k} \begin{pmatrix} n-2k \\ j \end{pmatrix} x^{n-2k-j} \Big( -\frac{b+a}{2} \Big)^j\\
        & = \sum_{k=0}^{[n/2]} \sum_{j=0}^{n-2k} \mu^n_{k,j} x^{n-2k-j} 
        = \sum_{k=0}^n \xi^n_k x^k,
    \end{align*}
where we used the binomial expansion in the third equality.
\end{proof}

Next, we explicitly provide the coefficient of the linear sum of the $n$ degree Lagrange interpolation polynomial with the Chebyshev nodes for the arbitrary function $F=F(x)$ for $x\in [a,b]$.
We will replace the arbitrary function $F$ with the function $f$ defined in \eqref{def:f} to prove Theorem \ref{thm:HCSE}.
The root of the $n$ degree Chebyshev polynomial, called Chebyshev nodes, in an arbitrary interval $[a,b]$ is given by
\begin{equation*}
    r^n_j:= \frac{a+b}{2} + \frac{b-a}{2}\cos \frac{2j+1}{2n}\pi, \quad (j=1,\ldots, n).
\end{equation*}
We have that
\begin{equation}\label{mpoly:Cheb}
\prod_{j=1}^n(x-r_j^n)=\frac{1}{2^n} \Big( \frac{b-a}{2} \Big)^{n+1} T_{n+1
}\Big( \frac{2x - (b+a)}{b-a} \Big), \ n \in \N.    
\end{equation}
Moreover, setting the coefficient as
\begin{equation}\label{cof:zeta}
    \zeta^n_j:= \frac{F(r^{n+1}_j)}{\prod_{k=0,k\neq j}^n(r^{n+1}_j - r^{n+1}_k)}, \quad (j=1,\ldots, n)
\end{equation}
for $n \in \N$,
we see that the $n$ degree Chebyshev polynomial for the function $F$ is given by
\begin{equation*}
    L_n(x):= \sum_{j=0}^n  \zeta^n_j \prod_{k=0,k\neq j}^n (x - r^{n+1}_k).
\end{equation*}
Then we obtain the following proposition. 
\begin{proposition}\label{prop:lag_coef}
    Set 
    \begin{align}
        &\beta_{l,j}^n := \sum_{k=l}^{n-1} (r^n_j)^{k-l} \xi^n_{k+1} \quad (j=1,\ldots,n),\notag\\
        &b^n_l:= \frac{1}{2^n} \Big( \frac{b-a}{2} \Big)^{n+1}  \sum_{j=0}^n \zeta^n_j \beta_{l,j}^{n+1} \quad (l=0,\ldots,n) \label{cof:b}
    \end{align}
    for $n \in \N$.
    Then the $n$ degree Lagrange interpolation polynomial $L_n$ for an arbitrary function $F$ om $[a,b]$ can be described as
    \begin{equation*}
        L_n(x)= \sum_{l=0}^n b^n_l x^l, \quad x \in [a,b].
    \end{equation*}
\end{proposition}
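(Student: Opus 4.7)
The plan is to start from the standard Lagrange form
\[
L_n(x) = \sum_{j=0}^n \zeta_j^n \prod_{\substack{k=0 \\ k\neq j}}^n (x - r_k^{n+1}),
\]
and to expand each factor $\prod_{k\neq j}(x-r_k^{n+1})$ as an explicit monomial series in $x$. The first step is to write $\prod_{k\neq j}(x-r_k^{n+1}) = P(x)/(x-r_j^{n+1})$, where $P(x) := \prod_{k=0}^{n}(x-r_k^{n+1})$. By \eqref{mpoly:Cheb}, $P(x) = c\, T_{n+1}\!\left(\tfrac{2x-(b+a)}{b-a}\right)$ with $c = 2^{-n}\bigl(\tfrac{b-a}{2}\bigr)^{n+1}$, and Lemma~\ref{lemm:Cheby} applied with $n$ replaced by $n+1$ immediately gives the monomial form $P(x) = c\sum_{k=0}^{n+1}\xi_k^{n+1} x^k$.

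The main technical step is to carry out the polynomial division of $P(x)$ by $(x-r_j^{n+1})$ in closed form. Using the telescoping identity
\[
\frac{x^{k+1}-r^{k+1}}{x-r} \;=\; \sum_{l=0}^{k} r^{k-l}\,x^{l},
\]
term by term for $k=0,1,\dots,n$, together with the fact that $r_j^{n+1}$ is a root of $P$ (so the residual fractions $\xi_k^{n+1} r^k/(x-r)$ sum to $P(r)/(x-r)=0$), I would obtain
\[
\frac{P(x)}{x-r_j^{n+1}} \;=\; c\sum_{k=1}^{n+1}\xi_k^{n+1}\sum_{l=0}^{k-1}\bigl(r_j^{n+1}\bigr)^{k-1-l} x^l.
\]
Exchanging the order of summation and reindexing via $k\mapsto k+1$ recasts the inner sum as $\sum_{k=l}^{n}\bigl(r_j^{n+1}\bigr)^{k-l}\xi_{k+1}^{n+1}$, which is exactly $\beta_{l,j}^{n+1}$. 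Hence $P(x)/(x-r_j^{n+1}) = c\sum_{l=0}^n \beta_{l,j}^{n+1}\,x^l$.

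Substituting back and interchanging the $j$- and $l$-sums then produces
\[
L_n(x) \;=\; \sum_{l=0}^n \left( c\sum_{j=0}^n \zeta_j^n\,\beta_{l,j}^{n+1} \right) x^l,
\]
and the bracketed coefficient is precisely the $b_l^n$ defined in \eqref{cof:b}, which completes the proof. The only real difficulty is clerical: keeping the two index conventions straight (the node label $j\in\{1,\dots,n\}$ used to define $r_j^n$ versus the range $k\in\{0,\dots,n\}$ appearing in the Lagrange product) and keeping the scaling constants in \eqref{mpoly:Cheb} consistent with Lemma~\ref{lemm:Cheby}. Once the convention is fixed at the outset, each subsequent step is an elementary polynomial manipulation.
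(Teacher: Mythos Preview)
Your proposal is correct and follows essentially the same route as the paper: write $\prod_{k\ne j}(x-r_k^{n+1})$ as $P(x)/(x-r_j^{n+1})$, use \eqref{mpoly:Cheb} and Lemma~\ref{lemm:Cheby} to expand $P(x)=c\sum_k\xi_k^{n+1}x^k$, divide by $(x-r_j^{n+1})$, and swap sums. The only difference is that the paper merely invokes the factor theorem and asserts the quotient has coefficients $\beta_{l,j}^{n+1}$, whereas you actually compute the quotient via the telescoping identity $(x^k-r^k)/(x-r)=\sum_{l=0}^{k-1}r^{k-1-l}x^l$; this extra detail is welcome and fills in exactly the step the paper leaves implicit.
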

\begin{proof}[Proof of Proposition \ref{prop:lag_coef}]
    Using Lemma \ref{lemm:Cheby} and \eqref{mpoly:Cheb}, we can compute that 
    \begin{align*}
        L_n(x)
        &=\sum_{j=0}^n  \zeta^n_j \prod_{k=0,k\neq j}^n (x - r^{n+1}_k)\\
        &=\sum_{j=0}^n \zeta^n_j \frac{1}{2^n} \Big( \frac{b-a}{2} \Big)^{n+1} T_{n+1
}\Big( \frac{2x - (b+a)}{b-a} \Big) \frac{1}{x - r^{n+1}_j}\\
        &=\frac{1}{2^n} \Big( \frac{b-a}{2} \Big)^{n+1} \sum_{j=0}^n \zeta^n_j  \frac{ \sum_{k=0}^{n+1} \xi^{n+1}_k x^k}{x - r^{n+1}_j}. 
    \end{align*}
    As $x - r^{n+1}_j$ for $j = 0,\ldots,n$ are the factors of $T_n( (2x - (b+a) )/(b-a) )$, respectively, $\sum_{k=0}^{n+1} \xi^{n+1}_k x^k/(x - r^{n+1}_j )$ must be divisible from the factor theorem.
    Thus, we obtain that
    \begin{align*}
     L_n(x) 
     &=  \frac{1}{2^n} \Big( \frac{b-a}{2} \Big)^{n+1}  \sum_{j=0}^n\zeta^n_j \sum_{l=0}^{n} \beta^{n+1}_{l,j} x^l\\
     &=\sum_{l=0}^{n} \Big( \frac{1}{2^n} \Big( \frac{b-a}{2} \Big)^{n+1} \sum_{j=0}^n \zeta^n_j \beta^{n+1}_{l,j} \Big)x^l\\
     & =  \sum_{l=0}^n b^n_l x^l.
    \end{align*}
\end{proof}

Before the proof of Theorem \ref{thm:HCSE},  we introduce the following constant for $n \in \N$ and $j \equiv n\pmod 2$:
\begin{equation*}
    \delta^n_j:=
    \left\{
    \begin{aligned}
    &\frac{1}{2^{n-1} } \begin{pmatrix} n \\ \frac{n-j}{2} \end{pmatrix} \quad \text{if $j \neq 0$},\\
    &\frac{1}{2^{n} } \begin{pmatrix} n \\ \frac{n-j}{2} \end{pmatrix} \quad \text{if $j=0$}.\\
    \end{aligned}
    \right.
\end{equation*}
Using this constant,
we can describe the formula as $x^n = \sum_{j=0, j \equiv n\pmod 2}^n \delta^n_j T_j(x)$.
In addition, from the property of the Chebyshev polynomial, we note that $T_n(\cosh(L-|x|))= \cosh n(L-|x|)$ for $n \in \N$ holds.
Using $f$ in \eqref{def:f} rather than $F$ in \eqref{cof:zeta}, we reconsider the coefficients $\zeta^n_j$ in \eqref{cof:zeta} and $b^n_l$ in \eqref{cof:b} in Theorem \ref{thm:HCSE}.
\begin{theorem}\label{thm:HCSE}
    Assume that $W \in C^m([0,L])$ for given $2 \le m \in \N$ and $W$ is even.
    let $f$ be 
    \begin{equation}\label{def:f}
        f(x) : = W(L - \log(x+\sqrt{x^2-1})) = W( \cosh^{-1}(L-x)),
    \end{equation}
    and $\zeta^n_j$ and $b^n_l$ be \eqref{cof:zeta} with $f$ and \eqref{cof:b} with a natural number $n \le m-1$, respectively.
    Set the coefficient $\alpha^{n}_j $ as
    \begin{equation*}
        \alpha^n_j = \sum_{\substack{ k=j \\ k \equiv j \pmod 2}}^n b^n_k\delta^k_j.
    \end{equation*}
    Then, for any $n \le m-1$ 
    \begin{equation*}
         \Big\| W - \sum_{j=0}^{n} \alpha^{n}_j \cosh j(L-\cdot) \Big\|_{C([0,L])} 
        \le \frac{1}{2^{n}(n+1)!}  \Big(\frac{\cosh L-1}{2} \Big)^{n+1} \max_{y\in[1,\cosh L ]}|f^{(n+1)}(y) |
    \end{equation*}
    holds.
\end{theorem}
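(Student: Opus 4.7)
The plan is to reduce the approximation of $W$ on $[0,L]$ to classical Lagrange interpolation of $f$ on $[1,\cosh L]$ via the change of variables $x = \cosh(L-y)$, which maps $[0,L]$ bijectively onto $[1,\cosh L]$ and yields the identity $W(y) = f(\cosh(L-y))$ directly from the definition \eqref{def:f}. Since $W$ is even and $2L$-periodic on $\Omega$, the function $g(u) := W(L-u)$ is even in $u$, so only even powers of $\cosh^{-1}(x)$ appear in the expansion of $f$ near $x=1$. Because $(\cosh^{-1}(x))^2$ is the analytic inverse of the smooth map $s \mapsto \cosh(\sqrt{s})-1 = s/2 + s^2/24+\cdots$, it is a smooth function of $x-1$, and this is what propagates enough regularity of $f$ across the would-be branch point at $x=1$ to justify the use of $\max_{y\in[1,\cosh L]}|f^{(n+1)}(y)|$ in the target estimate.

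Next I would apply the classical remainder formula for Lagrange interpolation at the $n+1$ Chebyshev nodes $r^{n+1}_0,\ldots,r^{n+1}_n$ of $[1,\cosh L]$: for each $x$ there is $\xi \in [1,\cosh L]$ with $f(x) - L_n(x) = \frac{f^{(n+1)}(\xi)}{(n+1)!}\prod_{k=0}^n(x-r^{n+1}_k)$. Combining this with \eqref{mpoly:Cheb} and $|T_{n+1}| \le 1$, the node product is bounded by $2^{-n}((\cosh L - 1)/2)^{n+1}$, giving
\[
\|f - L_n\|_{C([1,\cosh L])} \le \frac{1}{2^n(n+1)!}\Big(\frac{\cosh L - 1}{2}\Big)^{n+1}\max_{y\in[1,\cosh L]}|f^{(n+1)}(y)|,
\]
which is exactly the right-hand side of the theorem.

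It remains to verify that $L_n(\cosh(L-y)) = \sum_{j=0}^n\alpha^n_j\cosh(j(L-y))$, so that the pointwise identity $W(y) - \sum_j\alpha^n_j\cosh(j(L-y)) = f(\cosh(L-y)) - L_n(\cosh(L-y))$ holds on $[0,L]$. Starting from the monomial form $L_n(x) = \sum_{l=0}^n b^n_l x^l$ provided by Proposition \ref{prop:lag_coef}, I substitute the elementary expansion $x^l = \sum_{j=0,\,j\equiv l(\mathrm{mod}\,2)}^l \delta^l_j T_j(x)$ and invoke the defining identity $T_j(\cosh\theta) = \cosh(j\theta)$ with $\theta = L-y$. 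Interchanging the order of summation collects the coefficient of $\cosh(j(L-y))$ as $\sum_{l\ge j,\,l\equiv j(\mathrm{mod}\,2)}^n b^n_l\delta^l_j$, which matches the definition of $\alpha^n_j$. Taking $C([0,L])$ norms and invoking the estimate from the previous paragraph completes the proof.

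The main obstacle will be the first step: making precise that $f$ is $C^{n+1}$ at $x=1$, where $\cosh^{-1}$ has a square-root singularity. This requires the evenness of $W$ together with the compatibility of its even periodic extension across $y = L$, and ultimately rests on the Glaeser-type fact that a sufficiently smooth even function of $u$ pulls back to a smooth function of $u^2$. Once this regularity is in hand, the remainder of the argument is purely algebraic, relying only on the identities already stated in Lemma \ref{lemm:Cheby}, Proposition \ref{prop:lag_coef}, and the standard monomial-to-Chebyshev change of basis encoded by the coefficients $\delta^l_j$.
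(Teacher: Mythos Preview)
Your proposal is correct and follows essentially the same route as the paper: invoke the Chebyshev-node Lagrange remainder bound for $f$ on $[1,\cosh L]$, then use Proposition~\ref{prop:lag_coef} together with the monomial-to-Chebyshev expansion $x^l=\sum_j\delta^l_jT_j(x)$ and the identity $T_j(\cosh\theta)=\cosh(j\theta)$ to rewrite $L_n(\cosh(L-\cdot))$ as $\sum_j\alpha^n_j\cosh j(L-\cdot)$. Your treatment is in fact more careful than the paper's, which simply invokes ``the property of the Lagrange polynomial'' without discussing the regularity of $f$ at the endpoint $x=1$; your Glaeser-type observation that the evenness of $W$ about $y=L$ makes $f$ a smooth function of $x$ through the branch point is a genuine addition.
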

\begin{proof}[Proof of Theorem \ref{thm:HCSE}]
From the property of the Lagrange polynomial for $f(y)$ defined in \eqref{def:f} for any $y \in [1, \cosh L]$, we have
    \begin{equation*}
        \frac{1}{2^n(n+1)!}  \Big(\frac{\cosh L-1}{2} \Big)^{n+1} \max_{ y \in [1, \cosh L] }|f^{(n+1)}(y) |
        \ge \| f - L_n \|_{C([1,\cosh L])}.
    \end{equation*}
    Using Proposition \ref{prop:lag_coef} and changing the variable to $y = \cosh(L-x)$ for $x \in [0,L]$, we can compute the right-hand side of the above inequality as
    \begin{align*}
        \| f - L_n \|_{C([1,\cosh L])}
        &= \sup_{y \in [1,\cosh L]} | f(y)  - \sum_{j=0}^n b_j^n y^j|\\
        &=  \Big\| f  - \sum_{j=0}^n b_j^n \sum_{\substack{k=0 \\ k \equiv j\pmod 2}}^j \delta^j_k T_k\Big\|_{C([1,\cosh L])}\\
        &=\Big\| f(\cosh(L-\cdot))  - \sum_{j=0}^n b_j^n \sum_{\substack{k=0 \\ k \equiv j\pmod 2}}^j \delta^j_k T_k(\cosh(L-\cdot)) \Big\|_{C([0, L])}\\
        &=\Big\| W  - \sum_{j=0}^n b_j^n \sum_{\substack{k=0 \\ k \equiv j\pmod 2}}^j \delta^j_k \cosh k(L-\cdot) \Big\|_{C([0, L])}\\
         &=\Big| W  - \sum_{j=0}^n \sum_{\substack{k=j \\ k \equiv j\pmod 2}}^n b_k^n  \delta^k_j \cosh j(L-\cdot) \Big|_{C([0, L])}\\
         &=  \Big\| W - \sum_{j=0}^n \alpha^n_j \cosh j(L-\cdot) \Big\|_{C([0, L])}.
    \end{align*}
\end{proof}
Next, we prove Corollary \ref{cor:w-kj}.
\begin{proof}[Proof of Corollary  \ref{cor:w-kj}]
Using Theorem \ref{thm:HCSE} and $a_j$ defined in \ref{set:aj}, we can obtain the same estimate as the proof of \ref{thm:HCSE}.
\end{proof}

Then we explain the proof of Theorem \ref{thm:RDSA}.
\begin{proof}[Proof of Theorem \ref{thm:RDSA}]
According to  Theorem \ref{thm:HCSE}, for any even function $W$ in $C^\infty([0,L])$, and arbitrary $\eps'>0$ there exist a natural number $M>0$ and constants $\{ \alpha_j^{M-1}\}_{j=0}^{M-1}$ such that
\begin{equation*}
	\Big\| W - \sum_{j=0}^{M-1} \alpha_j^{M-1} \cosh j(L-|\cdot|) \Big\|_{C(\Omega)} < \eps'.
\end{equation*}
Here we set $n=M-1$ in Theorem \ref{thm:HCSE}.
Putting the parameters as \eqref{set:dj} and \eqref{set:aj}, and $\eps'=\eps$, then we have
\begin{equation*}
	\Big\| W - \sum_{j=1}^M a_j k_j\Big\|_{C(\Omega)} < \eps.
\end{equation*}
Let $\bar{u}(x,t)$ be the solution to \eqref{nonlocal:FP} with the integral kernel $\sum_{j=1}^M a_j k_j$.
Then Lemmas \ref{lemm:rho1_rho2} and \ref{lemma:L2_conv} yields that 
\begin{align*}
	\left\| u - u^\eps \right\|_{ C( [0,T], L^2 (\Omega))} 
	&\le \left\| u - \bar{u} \right\|_{ C( [0,T], L^2 (\Omega))} + \left\| \bar{u} - u^\eps \right\|_{ C( [0,T], L^2 (\Omega))}  \\
	&\le {\tilde C_T} \left\| W - \sum_{j=1}^M a_j k_j \right\|_{ L^1 (\Omega)}  + \sqrt{C_{11}} \eps\\
	&\le 2L {\tilde C_T} \eps +\sqrt{C_{11}} \eps \\
	& \le C_T \eps,
\end{align*}
where $C_T:=\max\{2L{\tilde C_T}, \sqrt{C_{11}} \}$.

\end{proof}
\begin{remark}
If $W$ is continuous rather than $C^\infty([0,L])$ under the assumption of Theorem \ref{thm:RDSA}, then the existence of $\{ a_j\}$ is guaranteed by Corollary 1 of Ninomiya et al. \cite{NTY2017}.
Thus, Theorem \ref{thm:RDSA} can be proven similarly.
\end{remark}

We performed a numerical simulation of the approximation for the potential $W$ by the linear combination of $\cosh j(L-|x|)$.
The results are shown in Figure \ref{fig:HCSE}.
\begin{figure}[bt]
\begin{center}
\begin{tabular}{ccc}
	\includegraphics[keepaspectratio, scale=0.4]{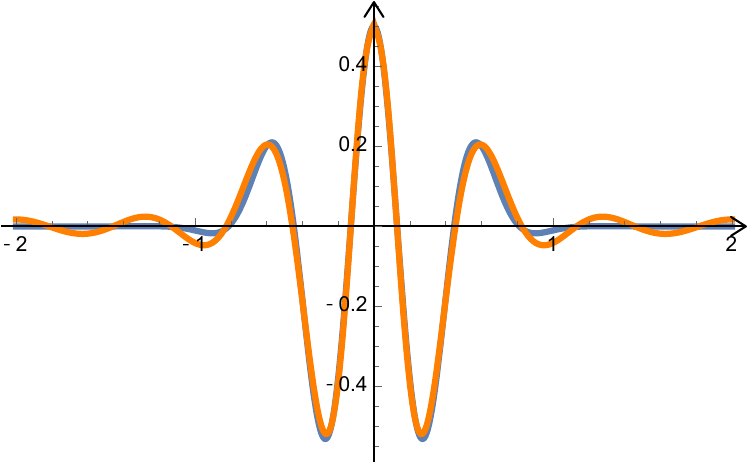}
	&\includegraphics[keepaspectratio, scale=0.35]{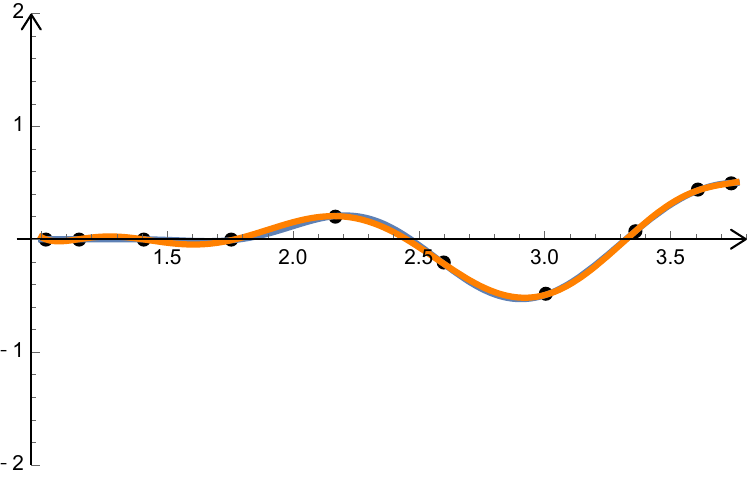} 
	&\includegraphics[keepaspectratio, scale=0.35]{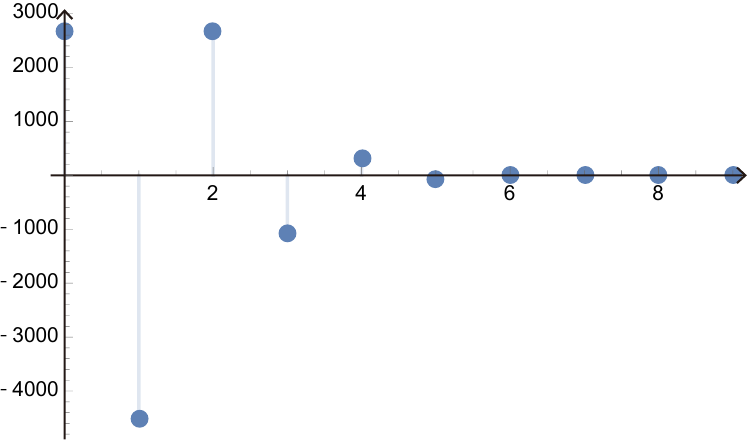} \\
	(a)&(b)&(c) \\
\end{tabular}
\end{center}
\caption{Results of a numerical simulation of the approximation for $W$ by the linear combination of $\cosh j(L-|x|)$. 
We set $W(x)=e^{-5x^2}(\cos(3\pi) x- 1/2 \cos(2\pi x))$, and $L=2$.
(a) Profiles of $W$ and the linear sum of $\cosh j(L-|x|)$. (b) Profiles of $f$ and the Lagrange interpolation polynomial on $[1, \cosh L]$. (c) Distribution of $\{ \alpha^9_j \}_{j=0}^9$.}
\label{fig:HCSE}
\end{figure}
The linear combination of $\cosh j(L-|x|)$ covers a potential $W$.
The longer the length of the interval $L$ becomes,  the worse the rate of convergence becomes from the numerical simulations.
However, as the rate of convergence is exponential as given by Theorem \ref{thm:HCSE},   the method for determining the coefficient of $a_j$ is compatible with, and useful for numerical simulations.

Figure \ref{fig2} shows the numerical results of \eqref{nonlocal:FP} with the potential $W(x)=e^{-5x^2}$ and \eqref{eq:system} with  parameters $\eps=0.001$ and $\{\alpha_j^6\}^6_{j=1}$ specified by Theorem \ref{thm:HCSE}.
We can observe that the solution $\rho$ in \eqref{nonlocal:FP} is approximated by that $\rho^\eps$ in \eqref{eq:system}, even though there are seven auxiliary factors $v_j^\eps$.
In Figure \ref{fig2} (c) shows the profiles of both $W(x)=e^{-5x^2}$ and $\sum_{j=0}^6 \alpha_j^6 \cosh(j(L-|x|))$.
Since $\sum_{j=0}^6 \alpha_j^6 \cosh(j(L-|x|))$ has good accuracy for the approximation of $e^{-5x^2}$, both curves are seen to overlap.
\begin{figure}[bt]
\begin{center}
\begin{tabular}{cccc}
	\includegraphics[keepaspectratio, scale=0.12]{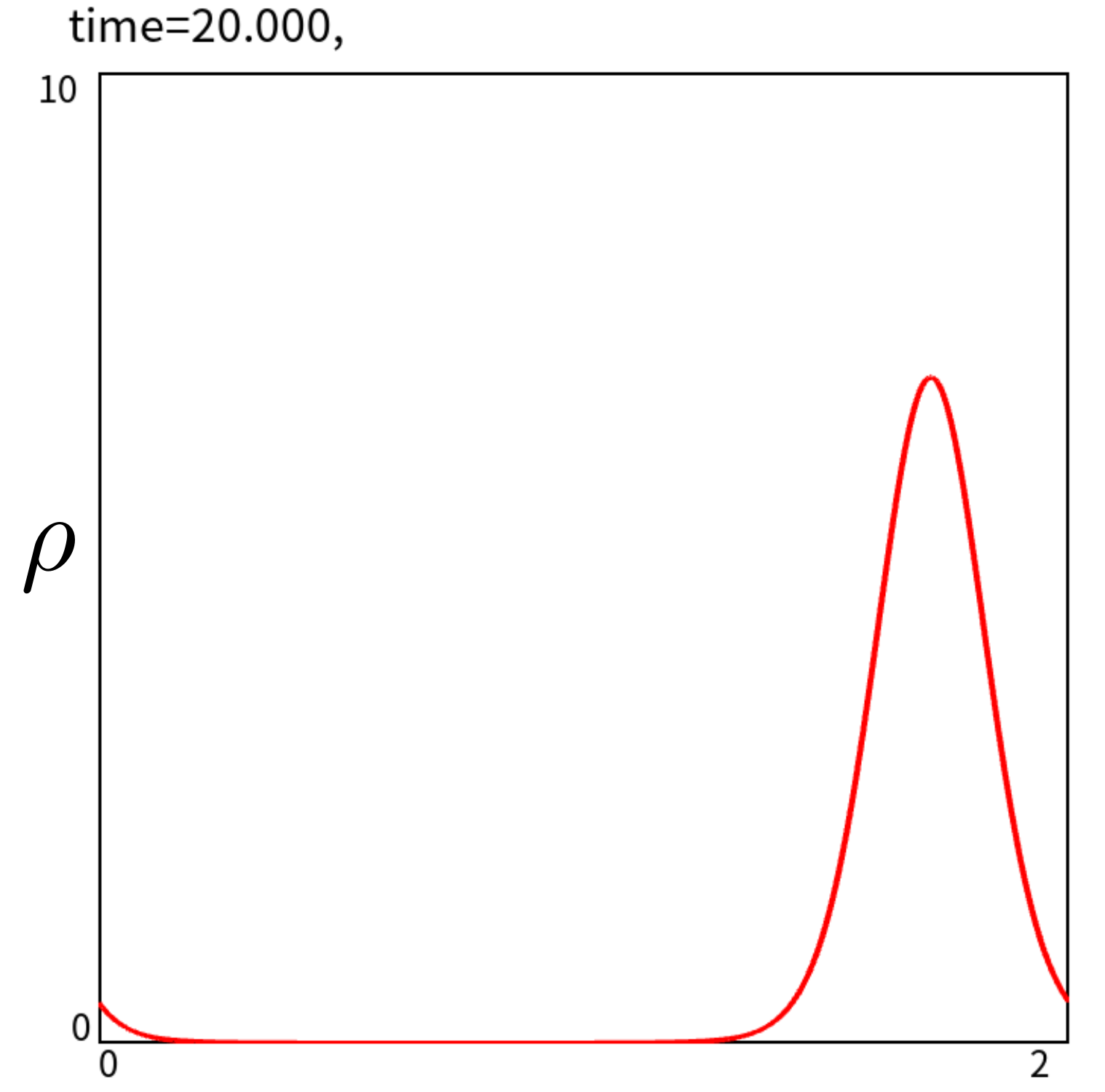}
	&\includegraphics[keepaspectratio, scale=0.12]{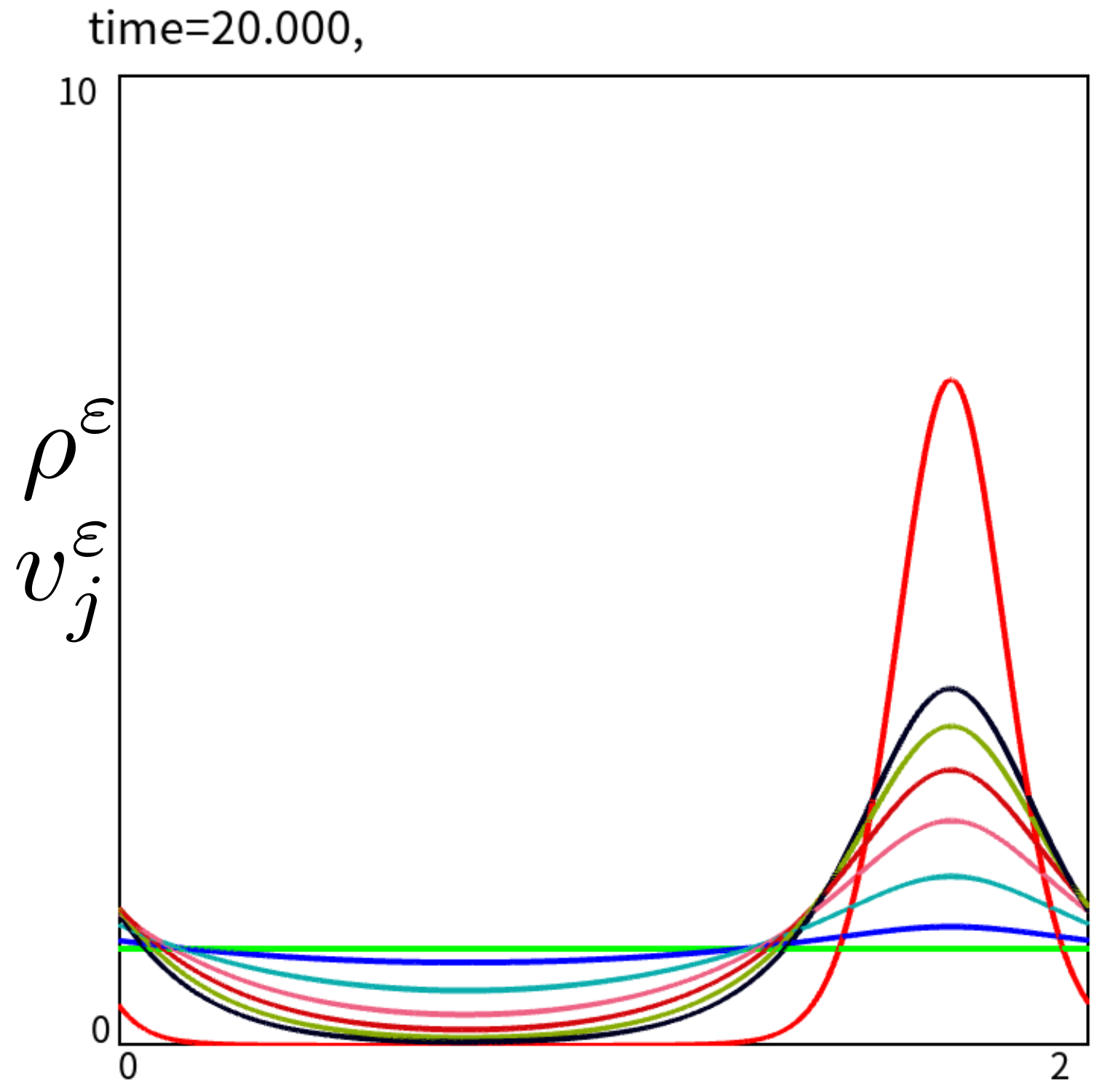} 
	&\includegraphics[keepaspectratio, scale=0.25]{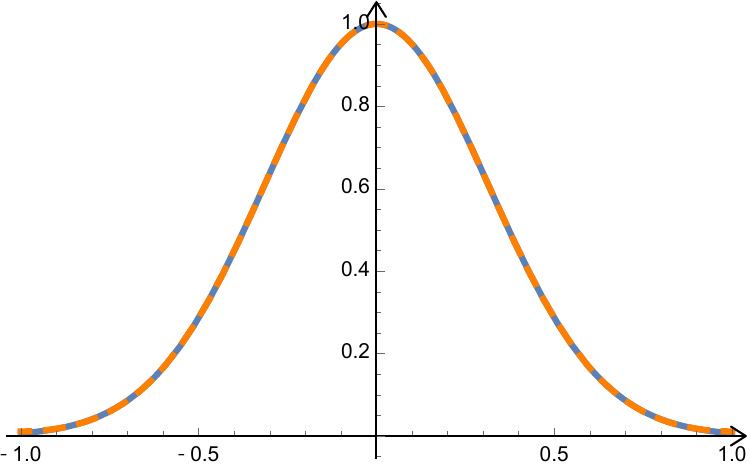} 
	&\includegraphics[keepaspectratio, scale=0.3]{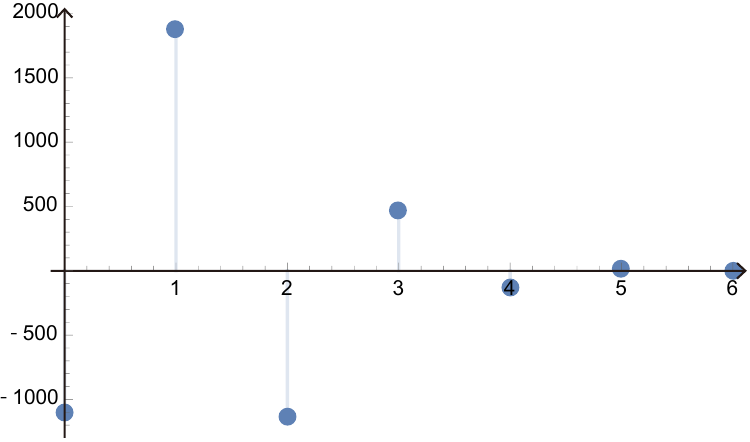} \\
	(a)&(b)&(c)&(d) \\
\end{tabular}
\end{center}
\caption{\small{ Results of numerical simulations for \eqref{nonlocal:FP} with a potential $W(x) = e^{-5x^2}$ and $\mu$ defined in \eqref{NFP:mu}, and \eqref{eq:system} with $M=7$. 
The parameters are given by $L=1$, $\eps=0.001$, $d_1=1000000$, $\mu=5$ and $d_j$ and $a_j$ are provided by \eqref{set:dj} and \eqref{set:aj}, respectively.
(a) Profiles of the numerical result of \eqref{nonlocal:FP} at $t=20.0$.
The horizontal and vertical axes correspond to the position $x$ and $\rho$, respectively.
The red curve is the numerical result of $\rho$.
(b) Profiles of the numerical result of \eqref{eq:system} at $t=20.0$.
We impose the same initial data for $\rho^\eps$ as that of $\rho$ and $(v_j)_0= k_j*\rho_0, \ (j=1,\ldots,M)$.
The axes are set same as that of (a).
The red and the other color curves correspond to $(\rho^\eps,\{v_j^\eps\}_{j=1}^7)$, respectively.
(c) Profiles of $W$ and $\sum_{j=0}^6 \alpha_j^6 \cosh(j(L-|x|))$.
The orange dashed and blue curves corresponding to $W$ and $\sum_{j=0}^6 \alpha_j^6 \cosh(j(L-|x|))$, respectively are drawn in a same plane.
(d) The distribution of $\{\alpha_j^6\}_{j=0}^6$.
}}
\label{fig2}
\end{figure}

\section{Linear stability analysis}\label{sec:LSA}
In this section, we perform a linear stability analysis around the equilibrium point for \eqref{nonlocal:FP} and \eqref{eq:system} with two or three components.
We demonstrate that the role of the advective nonlocal interactions with $W$ in the pattern formation.
We also demonstrate that the eigenvalue of the linearized operator of \eqref{eq:system} converges to that of \eqref{nonlocal:FP} as $\eps \to 0$ when the integral kernel is given by $k_j$ of \eqref{fund_sol:v_j}.
We analyze the following equation with the parameter $\mu>0$:
\begin{equation}\label{NFP:mu}\tag{$\mbox{P}_\mu$}
	\frac{ \partial \rho }{ \partial t }  = \frac{ \partial^2  \rho }{ \partial x^2 }  - \mu \frac{ \partial  }{ \partial x } \Big(  \rho  \frac{ \partial  }{ \partial x } ( W*\rho ) \Big) \ \text{in} \ \Omega\times (0,\infty).
\end{equation}
We explain the instability of the solution near the equilibrium point.
Let $\rho^*>0$ and $ \xi=\xi(x,t)$ be an arbitrary constant and a small perturbation, respectively.
 $\rho = \rho^*$ becomes a constant stationary solution of \eqref{nonlocal:FP}.
Putting $\rho(x,t) = \rho^* + \xi(x,t)$ and substituting it for \eqref{NFP:mu}, we have
\begin{align*}
	\xi_t 
	& = \xi_{xx} -  \mu \frac{ \partial }{\partial x} \Big( (\xi + \rho^*)  \frac{ \partial }{\partial x} W * (\xi + \rho^*)  \Big) \\
	& =  \xi_{xx} -  \mu( \rho^* W*\xi_{xx} + \xi_xW*\xi_x + \xi W*\xi_{xx}).
\end{align*}
Focusing on the linear part of above, we denote the linear operator $\mathcal L$ by
\begin{equation*}
	\mathcal{L}[u]:=  u_{xx} -  \mu\rho^* W*u_{xx}.
\end{equation*}
Because this linearized operator has $\mu\rho^*$, the effects of the strength of aggregation and the mass volume on the pattern formation around the constant stationary solution are equivalent.
Therefore, we replace $ \mu \rho^*$ with $\mu$.
Defining the Fourier coefficient of $W$ as 
\begin{equation*}
	\omega_n : = \frac{1}{ \sqrt{2L} }\int_\Omega W(x) e^{-i\sigma_n x} dx, \quad n \in \Z,
\end{equation*}
we have the following lemma with respect to the eigenvalues and eigenfunctions:
\begin{lemma}
Setting the eigenvalues 
\begin{equation*}
	\lambda( n ) = -\sigma_n^2( 1 - \sqrt{2L}  \mu \omega_n),
\end{equation*}
then we have
\begin{equation*}
	\mathcal L[ e^{i \sigma_n x } ] = \lambda( n ) e^{i \sigma_n x }, \quad n \in \Z.
\end{equation*}
\end{lemma}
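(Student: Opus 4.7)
The proof is a direct computation exploiting the fact that the complex exponentials $e^{i\sigma_n x}$ are simultaneous eigenfunctions of both differentiation and convolution with a periodic kernel. The plan breaks into three short steps.

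First, I would compute $\partial_x^2 e^{i\sigma_n x} = -\sigma_n^2 e^{i\sigma_n x}$ directly. The only nontrivial piece is to show that $W*e^{i\sigma_n \cdot}$ is again a scalar multiple of $e^{i\sigma_n x}$. For this, using the definition of the convolution over $\Omega = [-L,L]$ and performing the change of variable $z = x-y$, one obtains
\begin{equation*}
(W*e^{i\sigma_n \cdot})(x) = \int_\Omega W(x-y) e^{i\sigma_n y}\,dy = e^{i\sigma_n x} \int_{x-L}^{x+L} W(z) e^{-i\sigma_n z}\,dz.
\end{equation*}
Because $W \in L^1_{\mathrm{per}}(\Omega)$ and $e^{-i\sigma_n z}$ is likewise $2L$-periodic in $z$, the integrand is $2L$-periodic, so the integral over any interval of length $2L$ equals that over $[-L,L]$. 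By the definition of $\omega_n$ this yields $(W*e^{i\sigma_n \cdot})(x) = \sqrt{2L}\,\omega_n\, e^{i\sigma_n x}$.

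Second, I would combine the two identities: since differentiation and convolution commute,
\begin{equation*}
W*(e^{i\sigma_n \cdot})_{xx} = -\sigma_n^2\, W * e^{i\sigma_n \cdot} = -\sigma_n^2 \sqrt{2L}\,\omega_n\, e^{i\sigma_n x}.
\end{equation*}
Substituting into $\mathcal{L}[u] = u_{xx} - \mu\, W*u_{xx}$ gives
\begin{equation*}
\mathcal{L}[e^{i\sigma_n x}] = -\sigma_n^2 e^{i\sigma_n x} + \mu \sigma_n^2 \sqrt{2L}\,\omega_n\, e^{i\sigma_n x} = -\sigma_n^2\bigl(1 - \sqrt{2L}\,\mu\,\omega_n\bigr) e^{i\sigma_n x} = \lambda(n)\, e^{i\sigma_n x},
\end{equation*}
which is the claimed eigenvalue identity.

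There is essentially no obstacle here: the argument is a one-line Fourier computation, and the only detail to be careful about is invoking the periodicity of $W$ to identify the shifted integral with $\sqrt{2L}\,\omega_n$. No regularity beyond $W \in L^1_{\mathrm{per}}(\Omega)$ is needed for the convolution identity, and $e^{i\sigma_n x}$ is smooth so the second derivative is classical.
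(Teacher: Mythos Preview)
Your proof is correct and follows exactly the same approach as the paper: a direct computation of $(e^{i\sigma_n x})_{xx}$ and of $W*(e^{i\sigma_n\cdot})_{xx}$ using the Fourier coefficient $\omega_n$. You simply spell out in more detail the periodicity argument that justifies identifying the shifted integral with $\sqrt{2L}\,\omega_n$, which the paper leaves implicit.
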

\begin{proof}
The proof follows from a direct calculation
\begin{align*}
	&(e^{ i \sigma_n x })_{xx} = -\sigma_n^2 e^{i \sigma_n x }, \\
	&W*(e^{ i \sigma_n \cdot })_{xx} = -\sigma_n^2 \int_\Omega W(y) e^{i \sigma_n (x-y) } dy= - \sqrt{2L} \sigma_n^2 \omega_n e^{i \sigma_n x }.
\end{align*}
\end{proof}
Using this lemma, we find the solution to $\xi_t = \mathcal L [\xi]$ around $\rho^*$ in the form of $ \sum_{n\in \Z} \hat{\xi}_n e^{\lambda_n t} e^{i \sigma_n x } $, where $\{ \hat{\xi}_n \}$ is the Fourier coefficient.

Here, we recall the concept of the diffusion-driven instability in pattern formations proposed by Turing \cite{T1952}.
Diffusion-driven instability is a paradox where diffusion, typically leading to concentration homogenization, destabilizes the uniform stationary solution and induces nonuniformity due to the difference in the diffusion coefficients.
By using the eigenvalue $\lambda=\lambda(n)$ for the linear operator of the reaction-diffusion system, the diffusion-driven instability can be defined as the eigenvalue  $\lambda$ satisfies $\lambda(0)<0$ and there exists $n\in \Z$ such that $\lambda(n)>0$.
For the model \eqref{NFP:mu}, we have the following proposition:
\begin{proposition}
Suppose that $W$ satisfies that $\lim_{n \to \pm \infty} \omega_n=0$, and that there exists $0 \neq n_1 \in \N$ such that $\ds  Re \omega_{n_1} > 0$.
Then there exists $\mu_*>0$ such that for any $\mu>\mu_*$ there exists $0 \neq n_2 \in \N$ such that $\lambda(n_2) > 0$.
Therefore, the equilibrium point becomes unstable.
\end{proposition}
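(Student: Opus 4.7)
The plan is to exploit the explicit eigenvalue formula $\lambda(n)=-\sigma_n^2(1-\sqrt{2L}\,\mu\,\omega_n)$ directly and produce the threshold $\mu_*$ by hand, since the hypothesis already names a Fourier index $n_1$ that will drive the instability. The argument is essentially a sign-unpacking of the eigenvalue formula combined with the observation that the Fourier basis $\{e^{i\sigma_n x}\}_{n\in\mathbb{Z}}$ diagonalizes the operator $\mathcal{L}$ on the periodic domain, so the linear flow in mode $n$ is governed by $e^{\lambda(n)t}$.

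First, I would isolate the real part of the eigenvalue. For every $n\neq 0$ one has $\sigma_n^2>0$, so
\begin{equation*}
\mathrm{Re}\,\lambda(n)=-\sigma_n^2\bigl(1-\sqrt{2L}\,\mu\,\mathrm{Re}\,\omega_n\bigr).
\end{equation*}
Therefore $\mathrm{Re}\,\lambda(n)>0$ is equivalent to $\sqrt{2L}\,\mu\,\mathrm{Re}\,\omega_n>1$, and in that case the Fourier mode $e^{i\sigma_n x}$ is amplified exponentially in time by the semigroup generated by $\mathcal{L}$.

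Second, using the specific index $n_1\neq 0$ provided by the hypothesis, I would set
\begin{equation*}
\mu_*:=\frac{1}{\sqrt{2L}\,\mathrm{Re}\,\omega_{n_1}},
\end{equation*}
which is a positive real number since $\mathrm{Re}\,\omega_{n_1}>0$. Then for every $\mu>\mu_*$, the choice $n_2:=n_1$ immediately yields $\sqrt{2L}\,\mu\,\mathrm{Re}\,\omega_{n_1}>1$, hence $\mathrm{Re}\,\lambda(n_2)>0$. Consequently the perturbation component in the $n_1$-mode grows exponentially, so the constant stationary state $\rho^*$ is linearly unstable.

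There is no genuine obstacle: the argument reduces to unpacking the sign condition in the eigenvalue formula. The decay hypothesis $\omega_n\to 0$ as $|n|\to\infty$ is not actually required for producing the single unstable mode; it serves only to guarantee that $\mathrm{Re}\,\lambda(n)<0$ for all sufficiently large $|n|$, so that at most finitely many Fourier modes become unstable and one recovers a genuine finite-band (Turing-type) instability rather than destabilization of arbitrarily high frequencies. The only point requiring slight care is that $\omega_n$ need not be real in general, so the instability threshold must be expressed in terms of $\mathrm{Re}\,\omega_{n_1}$ rather than $\omega_{n_1}$ itself; once the kernel $W$ is real and even (as in the main applications of the paper) this distinction disappears and $\omega_n\in\mathbb{R}$.
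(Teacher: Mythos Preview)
Your argument is correct and follows essentially the same route as the paper: define the threshold $\mu_*$ from the single mode $n_1$ and take $n_2=n_1$ for every $\mu>\mu_*$. If anything, your version is slightly more careful than the paper's, since you keep the factor $\sqrt{2L}$ and work with $\mathrm{Re}\,\omega_{n_1}$ explicitly, and your remark that the decay hypothesis $\omega_n\to 0$ is not actually used in establishing the instability is accurate.
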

\begin{proof}
We see that $\lambda(0)=0$.
Next, from $Re \omega_{n_1}>0$, solving the inequality $	-\sigma_n^2(1 - \mu \omega_{n_1}) <0$ with respect to $\mu$,
we have
\begin{equation*}
	\mu > \frac{1}{ \omega_{n_1}} =: \mu_*.
\end{equation*}
Here we defined $\mu_*$.
Using this $\mu_*$, we find that for any $\mu>\mu_*$ there exists $n_2 \in \N$ such that $\lambda(n_2)>0$.
\end{proof}

We performed numerical simulations of  \eqref{NFP:mu} with the integral kernel \eqref{kernel:lali} and \eqref{pot:attract} with the finite volume method.
Figures \ref{fig1} and \ref{fig3} present the results. 
The Fourier coefficients for the integral kernels \eqref{kernel:lali} and \eqref{pot:attract} are given by 
\begin{align*}
	&\omega_1(n) = \frac{1}{\sqrt{2 L}}  \left(\frac{ 1 }{d_1 \sigma_n^2+1} - \frac{ 1 }{d_2 \sigma_n^2 +1}\right), \\
	&\omega_2(n) = \frac{\sqrt{2} }{ \sqrt{ L}  } \frac{ 1}{ \sigma_n^2  } (1-\cos (R \sigma_n ) ),
\end{align*}
respectively.
Figures \ref{fig:Mhat} (b) and  \ref{fig4} (b) show the distributions of the eigenvalues with $\omega_1(n) $ and $\omega_2(n) $.
The number of peaks of the solution at the beginning of the pattern formation corresponds to the maximum wave number in Figures \ref{fig1} and \ref{fig3}, respectively.
\begin{figure}[bt]
	\begin{center}
		\includegraphics[width=15cm, bb=0 0 754 204]{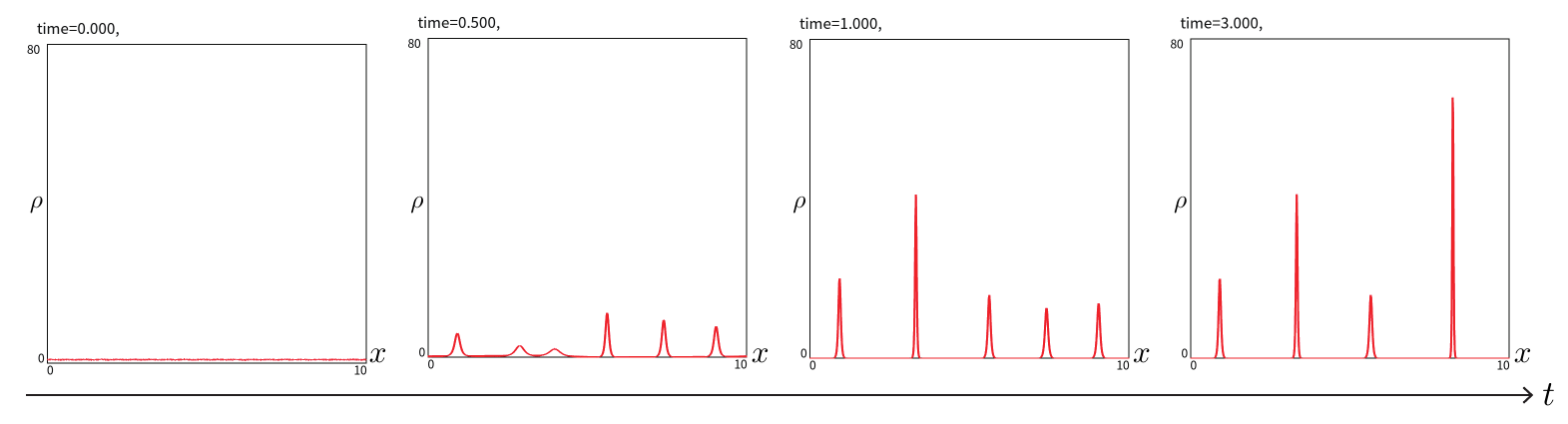}
	\end{center}
	\caption{\small{Results of a numerical simulation for \eqref{NFP:mu} with \eqref{kernel:lali}. 
	The parameters are $\mu=5.0$,  $d_1=0.1$, and $d_2=3.0$ and the initial datum are given by $1.0$ with small perturbations.
	The horizontal and vertical axes correspond to the position $x$ and value of solution $\rho$, respectively.
	The red curve corresponds to the solution $\rho$.
	The left, middle left, middle right and right pictures exhibit the profiles of solutions of \eqref{NFP:mu} with \eqref{kernel:lali} in the interval $[0, 10]$ at $t = 0, 0.5, 1.0$ and $3.0$, respectively.
	}}
	\label{fig1}
\end{figure}
\begin{figure}[bt]
\begin{center}
\begin{tabular}{cc}
	\includegraphics[width=6cm, bb=0 0 360 229]{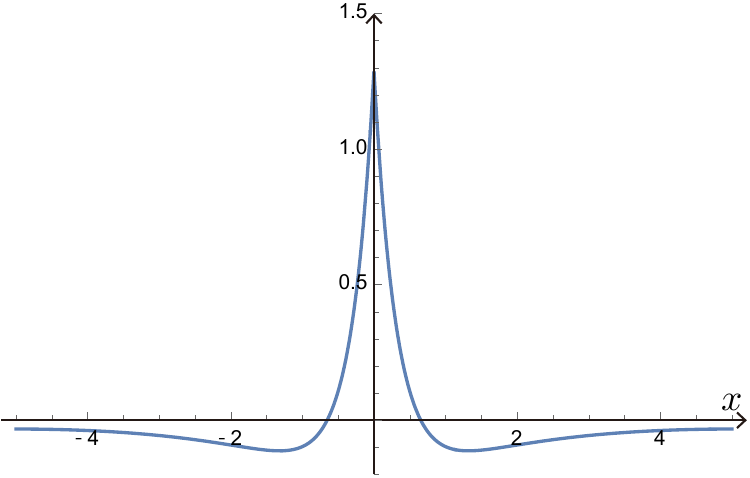}
	&\includegraphics[width=6cm, bb=0 0 559 358]{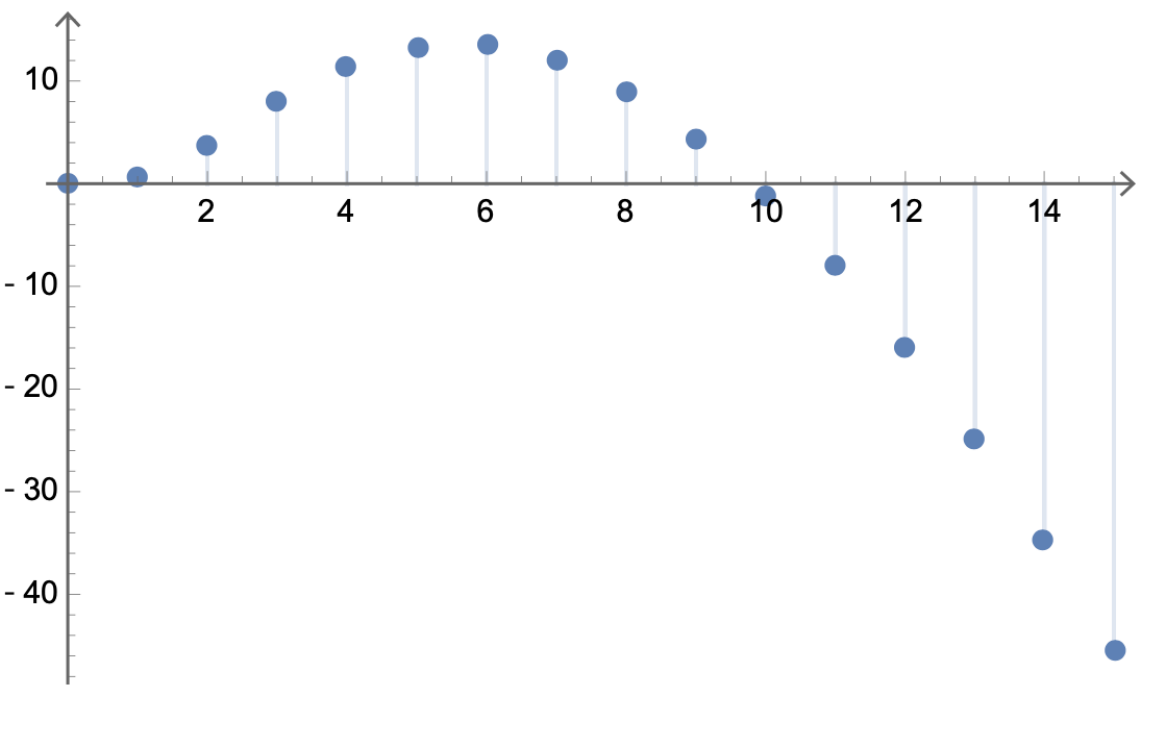} \\
	(a)&(b) \\
\end{tabular}
\end{center}
\caption{\small{ Profile of the integral kernel \eqref{kernel:lali}, and distribution of eigenvalue with $\omega_1(n)$ with same parameters as those in Fig \ref{fig1}.
}}
\label{fig:Mhat}
\end{figure}
\begin{figure}[bt]
	\begin{center}
		\includegraphics[width=15cm, bb=0 0 753 193]{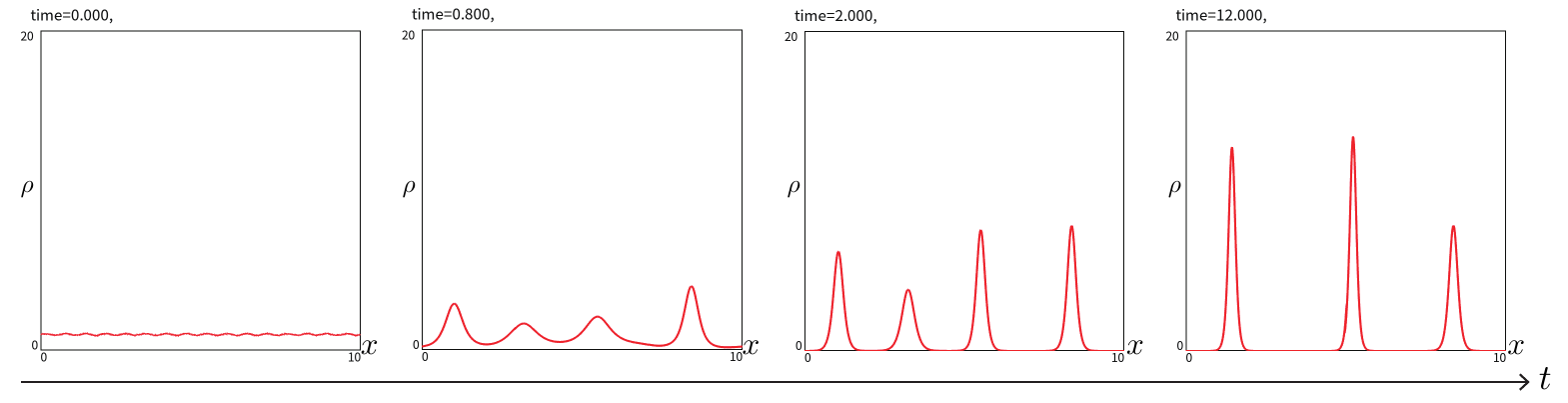}
	\end{center}
	\caption{\small{Results of a numerical simulation for \eqref{NFP:mu} with \eqref{pot:attract}. 
	The parameters are $\mu=4.0$ and $R=1.0$ and the initial datum are given by $1.0$ with small perturbations.
	The horizontal and vertical axes correspond to the position $x$ and value of solution $\rho$, respectively.
	The red curve corresponds to the solution $\rho$.
	The left, middle left, middle right and right pictures exhibit the profiles of solutions of \eqref{NFP:mu} with \eqref{pot:attract} in the interval $[0, 10]$ at $t = 0, 0.8, 2.0$ and $12.0$, respectively.
	}}
	\label{fig3}
\end{figure}
\begin{figure}[bt]
\begin{center}
\begin{tabular}{cc}
	\includegraphics[width=6cm, bb=0 0 360 245]{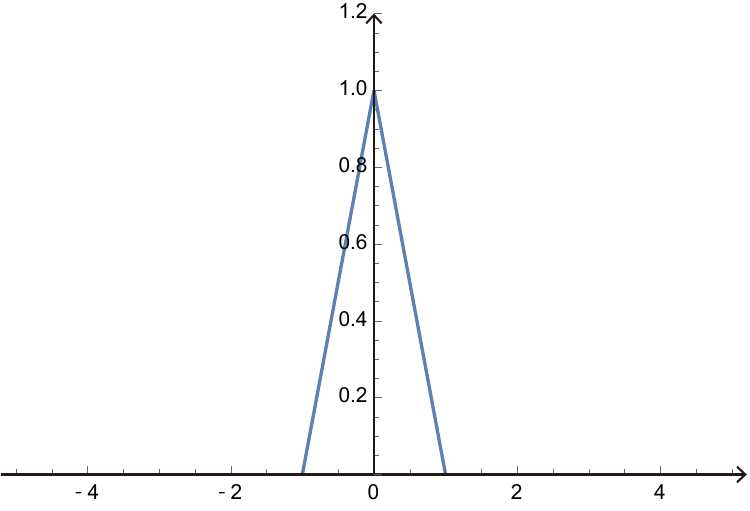}
	&\includegraphics[width=6cm, bb=0 0 287 184]{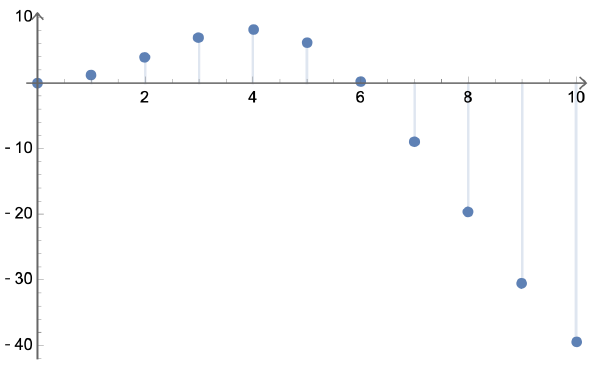} \\
	(a)&(b) \\
\end{tabular}
\end{center}
\caption{\small{ Profile of \eqref{pot:attract} and distribution of eigenvalue with $\omega_2(n)$ with same parameters as those in Fig \ref{fig3}.
}}
\label{fig4}
\end{figure}

For \eqref{kernel:KS}, we see that the solution of \eqref{NFP:mu} can be approximated by the following classical Keller--Segel equation with a linear sensitive function from Theorem \ref{thm:order_est}
\begin{equation} \label{eq:2c-system} 
\left\{
\begin{aligned}
	\frac{ \partial \rho^{\eps}}{ \partial t } & =  \frac{ \partial^2  \rho^{\eps} }{ \partial x^2 }  -  \mu\frac{ \partial  }{ \partial x } \Big(  \rho^{\eps}  \frac{ \partial   v_1^\eps }{ \partial x } \Big),\\
	\frac{ \partial v_1^\eps }{ \partial t } &= \frac{1}{\eps} \Big( d_1 \frac{\partial^2 v_1^\eps }{ \partial x^2 }  - v_1^\eps + \rho^{\eps} \Big)\\
\end{aligned}
\right. \ \text{in} \ \Omega\times (0,\infty),
\end{equation}
where $0 < \eps \ll 1$.
The auxiliary factor $v_1^\eps$ acts as an attractive substance during the chemotactic process.
It is expected that $\rho^\eps$ aggregates the place where the gradient of $v_1^\eps$ is high and the spatially localized patterns form.
Finding the solution in the form of the Fourier series expansion, 
the linearized problem around $(\rho^\eps, v_1^\eps)=(\rho^*, \rho^*)$ can be expressed by the following system:
\begin{equation*}
	\bm{\varphi}_t
	=
	\begin{pmatrix}
	-\sigma_n^2 &  \mu \sigma_n^2 \\[2mm]
	\dfrac{1}{\eps} & \dfrac{-d_1 \sigma_n^2 -1 }{\eps}
	\end{pmatrix}
	\bm{\varphi},
	\quad 
	\bm{\varphi} := 
	\begin{pmatrix}
	(\hat{\varphi_1})_n\\[2mm]
	(\hat{\varphi_2})_n
	\end{pmatrix},
\end{equation*}
where $(\hat{\varphi_1})_n$ and $ (\hat{\varphi_2})_n$ are the Fourier coefficients of the perturbations of $\rho^\eps$ and $ v_1^\eps$, respectively.
The characteristic polynomial of this Jacobi matrix $P_1(\lambda,\eps)$ is given by
\begin{align*}
	P_1(\lambda,\eps)
	&=\lambda^2+\Big( \frac{ d_1\sigma_n^2+1 }{\eps} +\sigma_n^2 \Big)\lambda+\frac{d_1\sigma_n^4 +\sigma_n^2-\mu \sigma_n^2}{\eps}\\
	&=:\lambda^2+\Big( \frac{ C_{30} }{\eps} + C_{31} \Big)\lambda + \frac{C_{32}}{\eps},
\end{align*}
where $C_{30}:=d_1\sigma_n^2+1$, $C_{31}:=\sigma_n^2 $, $C_{32}:= d_1\sigma_n^4+\sigma_n^2-\mu \sigma_n^2$.
Since
\begin{equation*}
	\eps \frac{\partial }{\partial \lambda}P_1(\lambda,\eps)
	=  2\eps \lambda +( C_{30}+\eps C_{31} )\to C_{30} = d_1\sigma_n^2+1 >0 \quad(\eps \to 0+0),
\end{equation*}
only one eigenvalue converges to a bounded value when $\eps \to 0+0$ from the implicit function theorem.
The eigenvalues are denoted by $\alpha_\eps = \alpha_\eps(n)$ and another one by $\beta_\eps = \beta_\eps (n)$.

$\alpha_\eps$ and $\beta_\eps$ are calculated as
\begin{align*}
	\alpha_\eps 
	&= \frac{1}{2} \Big( -\Big( \frac{ C_{30} }{ \eps } +C_{31} \Big) + \sqrt{ \Big( \frac{ C_{30} }{ \eps } +C_{31} \Big)^2 - \frac{ 4C_{32} }{ \eps }  } \Big)\\
	&= \frac{1}{2} \frac{ 4C_{32} }{ -( C_{30} +\eps C_{31} ) - \sqrt{ ( C_{30} +\eps C_{31} )^2 -4\eps C_{32} } },\\
	\beta_\eps
	&= \frac{1}{2} \Big( -\Big( \frac{ C_{30} }{ \eps }  + C_{31} \Big) - \sqrt{ \Big( \frac{ C_{30} }{ \eps } +C_{31} \Big)^2 - \frac{ 4C_{32} }{ \eps }  } \Big)\\
	&= \frac{1}{2} \frac{ 4C_{32} }{ -( C_{30} +\eps C_{31} ) + \sqrt{ ( C_{30} +\eps C_{31} )^2 -4\eps C_{32} } }.
\end{align*}
Thus, we find that $\lim_{\eps \to 0+0} \alpha_\eps = -\sigma_n^2 ( 1 -\sqrt{2L}\mu(\hat {k}_j)_n) =: \alpha_0(n)$ and $\lim_{\eps \to 0+0} \beta_\eps= -\infty$.
This implies that the pattern source provided by the perturbed equilibrium point is formed along the eigenfunction associated with the maximum eigenvalue of $\alpha_0(n)$.
Furthermore, the behavior of the solution $\rho$ in \eqref{NFP:mu} is extremely close to that of the Keller--Segel system of \eqref{eq:2c-system}.

For \eqref{kernel:lali}, by introducing $v^\eps_1=k_1*\rho$ and $v^\eps_2=k_2*\rho$ into $W$, the solution of \eqref{NFP:mu} can be approximated by that of the 3-component Keller--Segel system from Theorem \ref{thm:order_est}:
\begin{equation} \label{eq:3c-system} 
\left\{
\begin{aligned}
	\frac{ \partial \rho^{\eps}}{ \partial t } & =  \frac{ \partial^2  \rho^{\eps} }{ \partial x^2 }  - \mu \frac{ \partial  }{ \partial x } \Big( \rho^{\eps}  \frac{ \partial  }{ \partial x } (  v_1^\eps -  v_2^\eps ) \Big),\\
	\frac{ \partial v_1^\eps }{ \partial t } &= \frac{1}{\eps} \Big( d_1 \frac{\partial^2 v_1^\eps }{ \partial x^2 }  - v_1^\eps + \rho^{\eps} \Big),\\
	\frac{ \partial v_2^\eps }{ \partial t } &= \frac{1}{\eps} \Big( d_2 \frac{\partial^2 v_{1}^\eps }{ \partial x^2 }  - v_2^\eps + \rho^{\eps} \Big)
\end{aligned}
\right.\ \text{in} \ \Omega\times (0,\infty)
\end{equation}
with $0 < \eps \ll 1$.
In \eqref{eq:3c-system}, $v_1^\eps$ and $v_2^\eps$ represent the attractive and repulsive substances in chemotactic process, respectively.
By determining  the solution as a Fourier series expansion, the linearized problem is provided by the following system:
\begin{equation*}
	\bm{\varphi}_t
	=
	\begin{pmatrix}
	-\sigma_n^2 & \mu\sigma_n^2 & - \mu\sigma_n^2  \\[2mm]
	\dfrac{1}{\eps} & \dfrac{-d_1 \sigma_n^2 -1 }{\eps} & 0\\[2mm]
	\dfrac{1}{\eps} & 0 & \dfrac{-d_2 \sigma_n^2 -1 }{\eps} 
	\end{pmatrix}
	\bm{\varphi},
	\quad 
	\bm{\varphi} := 
	\begin{pmatrix}
	(\hat{\varphi_1})_n\\[2mm]
	(\hat{\varphi_2})_n\\[2mm]
	(\hat{\varphi_3})_n
	\end{pmatrix},
\end{equation*}
where $(\hat{\varphi_3})_n$ is the Fourier coefficient for  the perturbations of $v_2^\eps$.
The characteristic polynomial is given by
\begin{align*}
	&P_2(\lambda,\eps)= -\lambda^3 - \Big( C_{31} + \frac{ C_{33} }{ \eps } \Big) \lambda^2 - \Big( \frac{ C_{34} }{\eps^2} +\frac{ C_{35} }{\eps}  \Big)\lambda - \frac{ C_{36} }{\eps^2}\\
	&C_{33}:=2+d_1\sigma_n^2+d_2\sigma_n^2,\\
	&C_{34}:= (1+d_1\sigma_n^2)(1+d_2\sigma_n^2 ),\\
	&C_{35}:=\sigma_n^2( 2+ d_1\sigma_n^2 + d_2\sigma_n^2 ),\\
	&C_{36}:=\sigma_n^2(1+d_1\sigma_n^2)(1+d_2\sigma_n^2 ) + \mu( d_1 -d_2 )\sigma_n^4.
\end{align*}
Then, we can see that
\begin{align*}
	\eps^2\frac{\partial P_2}{\partial \lambda} 
	&= -3\eps^2\lambda^2 -2\eps \Big( \eps C_{31} + C_{33} \Big) \lambda -(  C_{34} +\eps C_{35}) \\
	&\to-  (1+d_1\sigma_n^2)(1+d_2\sigma_n^2 ) <0 \quad (\eps \to 0+0).
\end{align*}
Similarly to the previous calculation, only one eigenvalue converges to a bounded value when $\eps \to 0+0$ from the implicit function theorem.
Setting the eigenvalue as $\lambda_\eps$, we can compute that 
\begin{equation*}
	\lim_{\eps \to 0+0}\eps^2 P_2(\lambda_\eps,\eps) = - C_{34} \lambda_\eps - C_{36} =0,
\end{equation*}
and thus
\begin{equation*}
	\lambda_{0} = -\sigma_n^2 + \frac{ \mu ( d_2 -d_1 )\sigma_n^4 }{ (1+d_1\sigma_n^2)(1+d_2\sigma_n^2 ) } = -\sigma_n^2( 1- \mu \sqrt{2L} \omega_n ).
\end{equation*}
This implies that the solution to \eqref{eq:3c-system} and \eqref{NFP:mu} with \eqref{kernel:lali} are sufficiently close, but also that the Fourier mode of \eqref{NFP:mu} when the pattern forms around an equilibrium point is also extremely close to that of the 3-component attraction-repulsion Keller--Segel system \eqref{eq:3c-system}.
Because the constant stationary solution is destabilized by the auxiliary factors $v_1$ and $v_2$, the mechanism of the pattern formation is almost the same as that of diffusion-driven instability. 
In other words, if the integral kernel is provided by \eqref{kernel:lali}, the solution to \eqref{nonlocal:FP} is sufficiently close to that of the Keller--Segel system, which can cause the diffusion-driven instability, and thereby suggesting that the kernel $W$ is crucial in generating the diffusion-driven instability in the nonlocal Fokker--Planck equation \eqref{nonlocal:FP}.

We performed a numerical simulation of \eqref{eq:3c-system} with $\eps=0.001$.
The profile of the solution $\rho^\eps$ at each time point is similar to that of $\rho$ in Figure \ref{fig1} and \ref{fig5}.
As explained above, by approximating the dynamics of nonlocal evolution equations using Keller--Segel systems, we can describe the nonlocal dynamics within the framework of local dynamics, and identify both mechanisms.
\begin{figure}[bt]
	\begin{center}
		\includegraphics[width=15cm, bb=0 0 759 198]{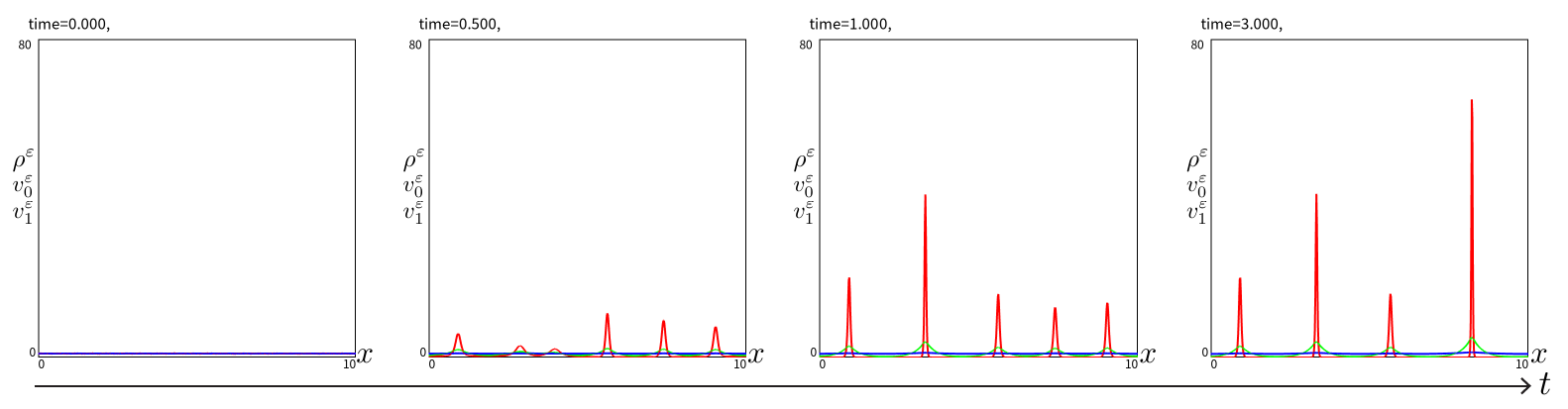}
	\end{center}
	\caption{\small{The results of a numerical simulation for \eqref{eq:system}. 
	The parameters $\mu, d_1, d_2$ and the initial data are same as that in Figure \ref{fig1}, and $\eps=0.001$ and $((v_1)_0,(v_2)_0)=( k_1*\rho_0, k_2*\rho_0 )$.
	The horizontal and vertical axes correspond to the position $x$ and value of solutions $\rho^\eps$, $v_1^\eps$ and $v_2^\eps$, respectively.
	The red, green and blue  curves correspond to the solution $\rho^\eps$, $v_1^\eps$ and $v_2^\eps$, respectively.
 The left, middle left, middle right and right pictures exhibit the profiles of solutions of \eqref{eq:system} in the interval $[0, 10]$ at $t = 0, 0.5, 1.0$ and $3.0$, respectively.
	}}
	\label{fig5}
\end{figure}

\section{Concluding remarks}\label{sec:7}
We approximated the solutions of the nonlocal Fokker--Planck equation with any even advective nonlocal interactions \eqref{nonlocal:FP} by those of multiple components of the Keller--Segel system \eqref{eq:system}. 
This indicates that the mechanism of the weight function for determining the velocity by sensing the density globally in space can be realized by combining multiple chemotactic factors.
Additionally, our results show that this diffusion–aggregation process can be described as a chemotactic process.
We propose a method in which the parameters $\{d_j, a_j\}$ can be determined based on the profile of the potential $W$.
Using the Keller--Segel type approximation, we rigorously demonstrate that the destabilization of the solution near equilibrium points in the nonlocal Fokker--Planck equation closely resembles diffusion-driven instability.
This type of analysis can be applied to other nonlocal evolution equations with advective nonlocal interactions, such as cell adhesion models.

The Keller--Segel approximation also benefits the numerical algorithm in \eqref{nonlocal:FP}.
By approximating the potential $W$ by $\sum_{j=1}^Ma_j k_j$ using Theorem \ref{thm:HCSE} and solving \eqref{eq:system} numerically, we can remove the nonlocality from \eqref{nonlocal:FP}.
By calculating these local systems instead of \eqref{nonlocal:FP} using a simple integral scheme, a numerical simulation can be performed more rapidly.

Theorem \ref{thm:RDSA} indicates that local dynamics, such as the Keller--Segel system, and nonlocal dynamics, such as the nonlocal Fokker--Planck equation, can be bridged.
Thus, we can treat the problem \eqref{nonlocal:FP} within the framework of \eqref{eq:system} if \eqref{eq:system} is easier.
As demonstrated by the linear stability analysis of \eqref{eq:system}, we can characterize the solutions to \eqref{nonlocal:FP} for local dynamics.

According to Ninomiya et al. \cite{NTY2017}, the existence of parameters $\{ a_j\}$ was shown in the weaker condition, that is, for the continuous integral kernel $W$.
Figure \ref{pot:Hapt} shows the numerical results of the approximation of \eqref{pot:attract} using the linear sum of $\cosh j(L-|x|)$.
This suggests that the condition of Theorem \ref{thm:HCSE} for determining $\{ a_j\}$ for potential $W$ may be relaxed.
We aim to intensify this investigation in the future.

\begin{figure}[bt]
\begin{center}
\begin{tabular}{ccc}
	\includegraphics[keepaspectratio, scale=0.33]{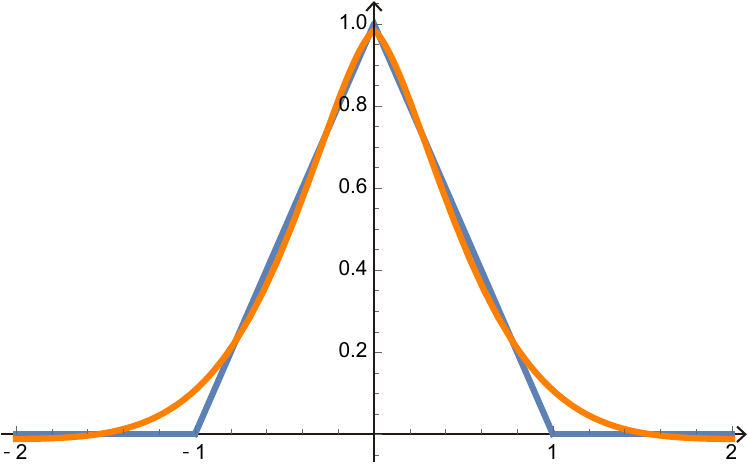}
	&\includegraphics[keepaspectratio, scale=0.33]{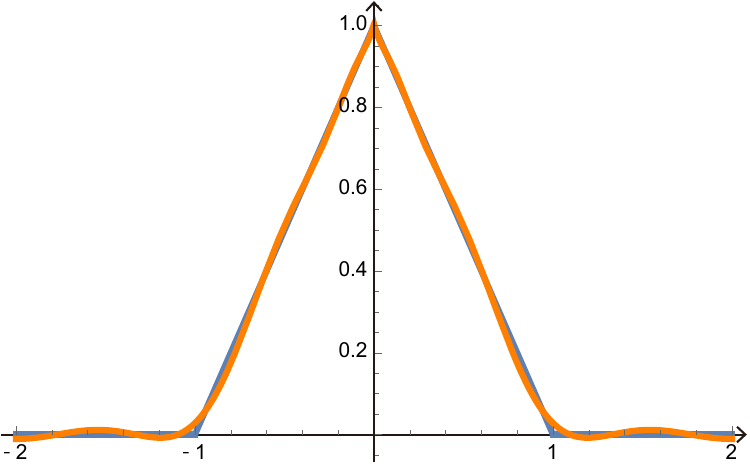} 
	&\includegraphics[keepaspectratio, scale=0.33]{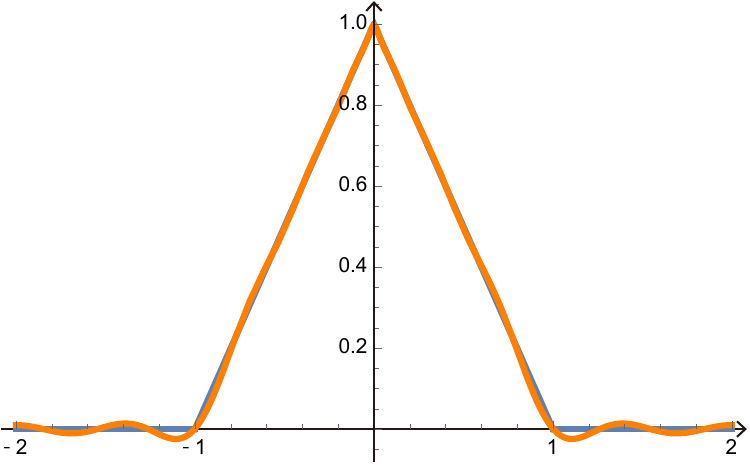} \\
	(a) $n=4$&(b) $n=8$&(c) $n=12$ \\
\end{tabular}
\end{center}
\caption{\small{Profiles of \eqref{pot:attract} and the linear combination of $\cosh j(L-|x|)$ depending of the number $n$. 
The blue and orange curves correspond to \eqref{pot:attract} and the linear combination of $\cosh j(L-|x|)$, respectively.
(a) $n=4$. (b) $n=8$. (c) $n=12$.
}}
\label{pot:Hapt}
\end{figure}

\section*{Acknowledgments}
The authors were partially supported by JSPS KAKENHI Grant Number 22K03444.
HM was partially supported by JSPS KAKENHI Grant Numbers 20H01823 and 21KK0044. 
YT was partially supported by JSPS KAKENHI Grant Number 20K14364.

\appendix
\section{Proof of Lemma \ref{lemm:bounded_G}}
\label{append:LBG}
\begin{proof}
First, we denote the Fourier Coefficients of $f$ and $g$ by
\begin{equation*}
	 f_n:=  \frac{1}{\sqrt{2L}}\int_\Omega f(x) e^{-i\sigma_n x} dx,  \quad g_n(t):=  \frac{1}{\sqrt{2L}}\int_\Omega g(x,t) e^{-i\sigma_n x} dx,
\end{equation*}
for $n \in \Z$, respectively. 

Then, using  the orthogonality and the Parseval identity, we compute that 
\begin{align*}
	\left\| G * f \right\| _{ L^2 (\Omega)} ^2 (t)
	&= \left\| \frac{1}{\sqrt{2L}} \sum_{n \in \Z} e^{- \sigma_n^2 t} e^{i \sigma_n \cdot} f_n \right\|_{  L^2 (\Omega)} ^2 \\
	&= \sum_{n \in \Z} e^{- 2 \sigma_n^2 t}  f_n^2  
	 \le  \sum_{n \in \Z}  f_n^2  
	= \left\| f \right\|_{ L^2 (\Omega)}^2.
\end{align*}

Straightforwardly  we can calculate that 
\begin{align*}
\left\|  \int_0^t  \int_\Omega  G( \cdot -y , t-s) g (y,s) dy ds  \right\| _{ L^2 (\Omega)}^2 
&= \left\| \frac{1}{\sqrt{2L}} \sum_{n \in \Z} e^{ -\sigma_n^2 t} e^{i \sigma_n \cdot } \int_0^t  e^{\sigma_n^2 s} g_n(s) ds  \right\|_{ L^2 (\Omega)} ^2\\
& = \sum_{n \in \Z} e^{ - 2 \sigma_n^2 t} \Big( \int_0^t  e^{\sigma_n^2 s} g_n(s) ds  \Big)^2 \\
& \le \sum_{n \in \Z}  \frac{1}{\sigma_n^4} ( 1- e^{-\sigma_n^2 t}  )^2 \sup_{ s \in [0, t] } | g_n(s)|^2  \\
& = t^2 \sum_{n \in \Z}  e^{ - 2 \sigma_n^2 t \theta_n} \sup_{ s \in [0, t] } |g_n(s)|^2   \\
& \le t^2  \left\|  g  \right\|_{ C([0,T], L^2 (\Omega)) }^2,
\end{align*}
where we used the Maclaurin series expansion, that is, for $\sigma_n^2 t$ there exists $\theta_n \in (0,1)$ such that 
\begin{equation}\label{series:Mac}
e^{-\sigma_n^2 t} = 1 - \sigma_n^2 t e^{-\sigma_n^2 t \theta_n}.
\end{equation}
Similarly, the Maclaurin series expansion \eqref{series:Mac} yields that 
\begin{align*}
\left\|  \int_0^t  \int_\Omega  G_x( \cdot -y , t-s) g(y,s) dy ds  \right\| _{ L^2 (\Omega)}^2 
&= \left\| \frac{1}{\sqrt{2L}} \sum_{n \in \Z} i\sigma_ne^{ -\sigma_n^2 t} e^{i \sigma_n \cdot } \int_0^t  e^{\sigma_n^2 s} g_n(s) ds  \right\|_{ L^2 (\Omega)} ^2 \\
& = \sum_{n \in \Z} \sigma_n^2e^{ - 2 \sigma_n^2 t} \Big( \int_0^t  e^{\sigma_n^2 s} g_n(s) ds  \Big)^2 \\
& \le \sum_{n \in \Z}  \frac{( 1- e^{-\sigma_n^2 t} )^2 }{\sigma_n^2}  \sup_{ s \in [0, t] } |g_n(s)|^2  \\
& =  t \sum_{n \in \Z}    e^{ -\sigma_n^2 t \theta_n } ( 1- e^{-\sigma_n^2 t}) \sup_{ s \in [0, t] } |g_n(s)|^2 \\
& \le t \sum_{n \in \Z}  \sup_{ s \in [0, T] } |g_n(s)| ^2 \\
&= t  \left\| g \right\|_{ C([0,T], L^2 (\Omega)) }^2.
\end{align*}

Next, we estimate the boundedness with $G_j^\eps$.
Utilizing  the orthogonality and the Parseval identity, we obtain that
\begin{align*}
&\frac{1}{\eps^2} \left\|  \int_0^t  \int_\Omega  G_j^\eps( \cdot -y , t-s)  g(y,s)  dy ds \right\|_{ L^2 (\Omega)}^2 \\
&= \frac{1}{\eps ^2} \left\| \frac{1}{\sqrt{2L} }   \sum_{n \in \Z}   e^{ - \frac{ d_j\sigma_n^2 + 1 }{ \eps }t  } 
	 			e^{ i\sigma_n \cdot }  \int_0^t e^{  \frac{ d_j\sigma_n^2 + 1 }{ \eps }s  }  g_n(s) ds  \right\|_{ L^2 (\Omega)} ^2 \\
	& = \frac{1}{\eps ^2}   \sum_{n \in \Z}  \Big( e^{ - \frac{ d_j\sigma_n^2 + 1 }{ \eps }t  }  \int_0^t e^{  \frac{ d_j\sigma_n^2 + 1 }{ \eps }s  }  g_n(s) ds \Big)^2 \\
	& \le  \sum_{n \in \Z}  \frac{1}{(d_j\sigma_n^2 + 1)^2}\Big( 1 - e^{ - \frac{ d_j\sigma_n^2 + 1 }{ \eps }t  }  \Big)^2 \sup_{s \in [0, t]} | g_n(s) |^2\\
	& \le  \sum_{n \in \Z} \sup_{s \in [0, T]} | g_n(s) |^2\\
	& = \left\| g \right\|_{ C([0,T], L^2 (\Omega)) }^2,
\end{align*}
and
\begin{align*}
	&\frac{1}{\eps^2} \left\|  \int_0^t  \int_\Omega  (G_{j}^\eps)_x( \cdot -y , t-s)  g(y,s)  dy ds \right\|_{ L^2 (\Omega)} ^2 \\
	& = \frac{1}{\eps ^2} \left\| \frac{1}{\sqrt{2L} }   \sum_{n \in \Z}  i\sigma_n e^{ - \frac{ d_j\sigma_n^2 + 1 }{ \eps }t  } 
	 			e^{ i\sigma_n \cdot }  \int_0^t e^{  \frac{ d_j\sigma_n^2 + 1 }{ \eps }s  }  g_n(s) ds  \right\|_{ L^2 (\Omega)} ^2 \\
	&\le \sum_{n \in \Z}  \frac{\sigma_n^2}{(d_j\sigma_n^2 + 1)^2}\Big( 1 - e^{ - \frac{ d_j\sigma_n^2 + 1 }{ \eps }t  }  \Big)^2 \sup_{s \in [0, t]} | g_n(s) |^2 \\
	& \le \frac{1}{d_j^2 \sigma_1^2} \sum_{n \in \Z} \sup_{s \in [0, T]} | g_n(s) |^2 \\
	& = \frac{1}{d_j^2 \sigma_1^2}  \left\| g \right\|_{C([0,T], L^2 (\Omega)) }^2.
\end{align*}

\end{proof}

\section{Proof of Lemma \ref{lemm:k_j_L^1}}\label{append:kjL1}
\begin{proof}
We can compute that 
\begin{align*}
\left\|  k_j \right\|_{ L^1 (\Omega)} = 2c_k(j)\int_0^L \cosh \frac{L-x}{\sqrt{d_j}} = 1,
\end{align*}
and
\begin{align*}
\left\|  (k_j)_x \right\|_{ L^1 (\Omega)} 
&=  \frac{c_k(j) }{ \sqrt{ d_j} }  \Big(  \int_0^L \Big| \sinh \frac{ L-x }{ \sqrt{ d_j} } \Big| dx + \int_{-L}^0 \Big| \sinh \frac{ L+x }{ \sqrt{ d_j} } \Big| dx \Big)\\
&= 2   \frac{c_k(j) }{ \sqrt{ d_j} }   \int_0^L  \sinh \frac{ x }{ \sqrt{ d_j} }  dx\\
&= C_8.
\end{align*}
We see that 
\begin{align*}
\left\|  (k_j)_{xx} \right\|_{ L^1 (\Omega)} = \frac{1}{d_j}\left\|  k_j \right\|_{ L^1 (\Omega)}
= \frac{1}{d_j}.
\end{align*}
As $(k_j)_{xxx}(x)  =  (k_j)_x(x) /d_j $, we obtain the third assertion.
Finally, we can compute that 
\begin{align*}
	\left\|  (k_j)_x \right\|_{ C (\Omega)} = \frac{ c_k(j) }{\sqrt{d_j}} * \sinh \frac{L}{\sqrt{ d_j}} = \frac{1}{2 d_j}.
\end{align*}
\end{proof}

\bibliography{references}

\end{document}